\documentclass[10pt]{amsart}

\usepackage{amsfonts, amssymb, amsmath, enumerate}

\numberwithin{equation}{section}


\DeclareMathOperator{\E}{\mathbb{E}}

\def \N {\mathbb{N}}

\def \R {\mathbb{R}}

\def \E {\mathbb{E}}

\def \vol {{\rm vol}}

\def \etc {,\ldots,}


\newtheorem{theorem}{Theorem}[section]
\newtheorem{proposition}[theorem]{Proposition}
\newtheorem{corollary}[theorem]{Corollary}
\newtheorem{lemma}[theorem]{Lemma}

\theoremstyle{remark}


\begin{document}

\title{On maximal hyperplane sections of the unit ball of $l_p^n$ for $p>2$}

\author{Hermann K\"onig (Kiel)}

\keywords{Volume, hyperplane sections, $l_p^n$-ball, random variables}
\subjclass[2000]{Primary: 52A38, 52A40 Secondary: 46 B07, 60F05}

\begin{abstract}
The maximal hyperplane section of the $l_\infty^n$-ball, i.e. of the $n$-cube, is the one perpendicular to $\frac 1 {\sqrt 2} (1,1,0 \etc 0)$, as shown by Ball. Eskenazis, Nayar and Tkocz extended this result to the $l_p^n$-balls for very large $p \ge 10^{15}$. By Oleszkiewicz, Ball's result does not transfer to $l_p^n$ for $2 < p < p_0 \simeq 26.265$. Then the hyperplane section perpendicular to the main diagonal yields a counterexample for large dimensions $n$. Suppose that $p_0 \le p < \infty$. We show that the analogue of Ball's result holds in $l_p^n$-balls for all hyperplanes with normal unit vectors $a$, if all coordinates of $a$ have modulus $\le \frac 1 {\sqrt 2}$ and $p$ has distance $\ge 2^{-p}$ to the even integers. Under similar assumptions, we give a Gaussian upper bound for $20 < p < p_0$.
\end{abstract}

\maketitle

\begin{center}
\textit{Dedicated to the memory of Albrecht Pietsch}
\end{center}

\section{Introduction and main results}

It is not an easy task to determine extremal volumes of hyperplane sections or projections onto hyperplanes of convex bodies, even for specific convex sets. The proofs of known results are mainly based on probabilistic techniques and Fourier analysis, see Koldobsky \cite{K}, Chapters 3, 7 and 8.
Ball \cite{B} proved in a celebrated paper that the hyperplane section of the $n$-cube orthogonal to $a^{(2)}:=\frac 1 {\sqrt 2} (1,1,0 \etc 0) \in \R^n$ has maximal volume among all hyperplane sections. Hadwiger \cite{Ha} and Hensley \cite{He} had shown before that coordinate hyperplanes, e.g. those perpendicular to $(1,0 \etc 0) \in \R^n$, yield minimal volume central hyperplane cubic sections. \\

Meyer and Pajor \cite{MP} considered the analogue in $l_p^n$ for $1 \le p < \infty$. They proved that the normalized volume of central hyperplane sections of the unit ball $B_p^n$ of $l_p^n$ is monotone increasing in $p$, implying that coordinate hyperplanes provide the minimal sections for $2 < p < \infty$ and the maximal sections for $1 \le p < 2$. The minimal sections of $B_p^n$ for $1 \le p < 2$ are those orthogonal to the main diagonals, e.g. to $a^{(n)}:=\frac 1 {\sqrt n} (1, \etc 1) \in \R^n$, see \cite{MP} for $p=1$ and Koldobsky \cite{K} for $1 < p < 2$. \\

The question of the maximal hyperplane sections of $B_p^n$ for $2 < p < \infty$ is more complicated, since the answer may depend on $p$ as well as on the dimension $n$. Oleszkiewicz \cite{O} showed that Ball's result does not transfer to $B_p^n$ for $2 < p < p_0 \simeq 26.265$. In that case, the hyperplane section perpendicular to a main diagonal has larger volume than the one orthogonal to $a^{(2)}$ for large dimensions $n$. A quantitative estimate for which dimensions this happens is given in K\"onig \cite{K1}. On the other hand, for very large $p \ge 10^{15}$, Eskenazis, Nayar and Tkocz proved that Ball's result is stable for $l_p^n$: then $B_p^n \cap (a^{(2)})^\perp$ has maximal hyperplane section volume for all dimensions $n$. They call it "resilience of cubic sections". We extend this result to $p_0 \le p < \infty$ for those hyperplanes with normal unit vectors $a$, if all coordinates of $a$ have modulus $\le \frac 1 {\sqrt 2}$ and $p$ has distance $\ge 2^{-p}$ to the even integers. For $20 < p < p_0$ we give an asymptotic Gaussian type upper bound under similar assumptions. \\

Finding extremal projections of convex bodies onto hyperplanes is dual to determining extremal sections. For $l_p^n$-balls, the known results for sections essentially transfer to projections, if one interchanges $p$ and $q=p'$, $\frac 1 p + \frac 1 {p'} = 1$, and minimal and maximal volume results. For instance, Ball's result corresponds to the fact that the projection of the $l_1^n$-ball onto the hyperplane $(a^{(2)})^\perp$ has minimal volume, see Barthe, Naor \cite{BN}. The latter result is a consequence of Cauchy's projection formula and the optimal lower bound in the $L_1$-Khintchine inequality, which was found by Szarek \cite{S}. The Khintchine inequalities are important in Banach Space Theory and Operator Theory. They are useful, in particular, when studying absolutely $p$-summing operators, see Pietsch \cite{P}, Chapter 17. For $l_q^n$-balls, determining extremal hyperplane projections is equivalent to finding the best constants in a Khintchine-type inequality for random variables with a certain exponential density, see Barthe, Naor \cite{BN}. Identifying extremal hyperplane sections of $l_p^n$-balls also amounts to finding the best constants in a generalized Khintchine inequality, see the remarks after Proposition \ref{prop1}. Even though the results for projections are dual to those for sections, the known proofs are not obtained by duality. This is a consequence of the fact that volume does not behave well under duality.
Nayar and Tkocz \cite{NT} gave a beautiful survey on extremal sections and projections of classical convex bodies. \\

To formulate our results on sections of $l_p^n$ precisely, we introduce the following notations. For $1 \le p \le \infty$ and any natural positive number $n \in \N$, denote the closed unit ball of $l_p^n$ by
$$B_p^n := \{ \ x = (x_j)_{j=1}^n \in \R^n \ \big| \ \|x \|_p := ( \sum_{j=1}^n |x_j| )^{\frac 1 p} \le 1 \} \ . $$
Let $S^{n-1} := \{ \ x \in \R^n \ \big| \ \| x \|_2 =1 \ \}$, $a \in S^{n-1}$ be a direction vector and $a^\perp$ the hyperplane orthogonal to $a$. We define the normalized section function $A_{n,p}$ by
$$ A_{n,p}(a) := \frac{\vol_{n-1}(B_p^n \cap a^\perp)}{\vol_{n-1}(B_p^{n-1})} \ . $$
For $1 \le k \le n$, let $a^{(k)} := \frac 1 {\sqrt k} (\underbrace{1 \etc 1}_k , 0 \etc 0) \in S^{n-1} \subset \R^n$. Then Ball's theorem states $A_{n,\infty}(a) \le A_{n,\infty}(a^{(2)}) = \sqrt 2$ for all $n \in \N$ and $a \in S^{n-1}$. Eskenazis, Nayar and Tkocz's result reads $A_{n,p}(a) \le A_{n,p}(a^{(2)}) = 2^{\frac 1 2 - \frac 1 p}$ for all $n \in \N$, $a \in S^{n-1}$ and $p \ge 10^{15}$. Note that $A_{n,p}(a^{(2)})$ does not depend on $n$. \\

Oleszkiewicz \cite{O} showed that $\lim_{n \to \infty} A_{n,p}(a^{(n)}) > A_{n,p}(a^{(2)})$ holds for all $2 < p < p_0 \simeq 26.265$, where $p_0$ is the unique solution of $\frac 3 \pi \frac{2^{\frac 2 p} \Gamma \left(1+\frac 1 p \right)^3}{\Gamma \left(1+\frac 3 p \right)} = 1 $ for $ p \in (2,\infty)$.
More precisely, we have
$\lim_{n \to \infty} A_{n,p}(a^{(n)}) = \left(\frac 6 \pi \frac{\Gamma \left(1+ \frac 1 p \right)^3}{\Gamma \left(1+ \frac 3 p \right)} \right)^{\frac 1 2} > A_{n,p}(a^{(2)}) = 2^{\frac 1 2 - \frac 1 p}$, see \cite{O} and also \cite{K1}. \\

Let $a = (a_j)_{j=1}^n \in S^{n-1}$. In view of the symmetries of $B_p^n$, we may suppose that $a_1 \ge a_2 \ge \cdots \ge a_n \ge 0$. We assume this in the statement of our two main results:

\begin{theorem}\label{th1}
Let $p_0 \le p < \infty$ and $dist(p, 2 \N) \ge 2^{-p}$. Then for all $n \in \N$, $n \ge 2$ and $a \in S^{n-1}$
$$A_{n,p}(a) \le A_{n,p}(a^{(2)}) = 2^{\frac 1 2 - \frac 1 p} \ , $$
with the possible exception of those $a$ with $a_1 \in [\frac 1 {\sqrt 2}, \frac{2^{\frac 1 p}}{\sqrt 2}] =: I_p$, an interval of length $|I_p| < \frac 1 {2 p}$. If $a_1 \in I_p$ or if $dist(p, 2 \N) < 2^{-p}$, we have at least
$$A_{n,p}(a) < 2^{\frac 1 p} A_{n,p}(a^{(2)}) \ . $$
The restriction on $p$ can be weakened to $dist(p, 2 \N) \ge p^{-p}$ for $p \ge 50$ and to $dist(p, 2 \N) \ge p^{-5 p}$ for $p \ge 175$.
\end{theorem}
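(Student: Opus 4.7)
The plan is to start from the standard Fourier-analytic representation
\[
A_{n,p}(a) \;=\; \frac{K(n,p)}{\pi} \int_0^\infty \prod_{j=1}^n \phi_p(a_j t)\, dt,
\qquad
\phi_p(t) := \int_0^\infty \cos(ts)\, e^{-s^p}\, ds,
\]
with $K(n,p)$ chosen so that $A_{n,p}(e_1)=1$; such a formula is classical (Meyer--Pajor, Koldobsky \cite{K}). Since $\phi_p$ is a positive multiple of the characteristic function of the symmetric probability density proportional to $e^{-|s|^p}$, one has $|\phi_p(t)| \le \phi_p(0) = \Gamma(1+1/p)$ for all $t\in\R$. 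The three assertions of the theorem correspond naturally to a three-way split on the size of $a_1 = \max_j a_j$.

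In the cheap regime $a_1 \ge 2^{1/p}/\sqrt{2}$, I would substitute $s=a_1 t$ and bound $|\phi_p(\cdot)| \le \phi_p(0)$ on each of the $n-1$ factors indexed by $j\ge 2$; the remaining integral is exactly the one computing $A_{n,p}(e_1)=1$, and dividing by the Jacobian $a_1$ yields the sharp
\[
A_{n,p}(a) \;\le\; 1/a_1 \;\le\; 2^{1/2-1/p} \;=\; A_{n,p}(a^{(2)}).
\]
If instead $a_1 \in I_p = [1/\sqrt 2,\,2^{1/p}/\sqrt 2]$, the same crude estimate still gives $A_{n,p}(a)\le 1/a_1 \le \sqrt 2 = 2^{1/p}A_{n,p}(a^{(2)})$, which is the weaker exceptional bound in the statement. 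The substantive case is $a_1 \le 1/\sqrt 2$: there I would establish a Ball-type integral inequality for $\phi_p$, namely that for all weights $c_j\in(0,1/\sqrt 2]$ with $\sum c_j^2 = 1$,
\[
\int_0^\infty \prod_{j=1}^n \phi_p(c_j t)\, dt \;\le\; \phi_p(0)^{n-2} \int_0^\infty \phi_p(t/\sqrt 2)^2\, dt,
\]
which after the common normalization $K(n,p)/\pi$ reads exactly $A_{n,p}(a) \le A_{n,p}(a^{(2)})$. By standard H\"older/convexity arguments this reduces to a pointwise log-concavity-in-$c$ statement for $(1/c^2)\log|\phi_p(ct)|$ on $c\in(0,1/\sqrt 2]$, i.e.\ a Gaussian-type majorant of the form $|\phi_p(ct)|^{1/c^2} \le \phi_p(0)^{1/c^2-2}\phi_p(t/\sqrt 2)^2$.

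The main obstacle is establishing this pointwise bound uniformly for $p\ge p_0$, since Oleszkiewicz's \cite{O} counterexamples along the main diagonal show the threshold $p_0 \simeq 26.265$ is sharp. My line of attack is a saddle-point / repeated integration-by-parts analysis of $\phi_p(t)$ yielding an asymptotic expansion whose leading tail term has the form $\sin(\pi p/2)\,\Gamma(p+1)\,t^{-p-1}$. This factor $\sin(\pi p/2)$ vanishes precisely on $2\N$, which is why the tail estimate collapses at even integers and forces the hypothesis $dist(p,2\N)\ge 2^{-p}$: it preserves $|\sin(\pi p/2)|\gtrsim 2^{-p}$, which is just large enough to dominate the exponentially small error terms produced by the saddle-point expansion in the bulk. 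The improved thresholds $p^{-p}$ for $p\ge 50$ and $p^{-5p}$ for $p\ge 175$ reflect the fact that these error terms decay super-polynomially in $p$, so the forbidden neighborhoods of $2\N$ can be taken correspondingly smaller. The hard analytic work therefore consists in gluing a Gaussian bulk majorant for $|\phi_p|$ to the $\sin(\pi p/2)$-driven tail, with constants calibrated precisely enough to recover $A_{n,p}(a^{(2)}) = 2^{1/2-1/p}$ right down to $p = p_0$.
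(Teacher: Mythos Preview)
Your ``cheap regime'' argument has a genuine gap: for $p>2$ the function $\phi_p$ (equivalently $\gamma_p$) changes sign --- P\'olya showed it has finitely many real zeros for $p\notin 2\N$ and infinitely many for $p\in 2\N$. Thus the pointwise bound $|\phi_p(a_j t)|\le \phi_p(0)$ on the factors $j\ge 2$ does \emph{not} give an upper bound for $\int_0^\infty \prod_j \phi_p(a_j t)\,dt$: at points $t$ where $\phi_p(a_1 t)<0$ and an odd number of the other factors are negative, the full product is positive while $\phi_p(0)^{n-1}\phi_p(a_1 t)$ is negative. So your claimed inequality $A_{n,p}(a)\le 1/a_1$ does not follow from the Fourier integral; the paper instead proves $A_{n,p}(a)\le 1/a_1$ by a purely geometric argument, embedding $B_p^n\cap a^\perp$ in a cylinder over $B_p^{n-1}$ and computing the Jacobian of the projection onto $(a^{(1)})^\perp$. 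This also undermines your treatment of the exceptional interval $I_p$, since it relies on the same $1/a_1$ bound; the paper closes that gap by invoking Meyer--Pajor monotonicity of $A_{n,p}(a)$ in $p$ to pass to a nearby $q>p$ with $a_1\ge 2^{1/q}/\sqrt 2$.

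In the main case $a_1\le 1/\sqrt 2$ your outline is closer in spirit but mis-states the key reduction. The inequality one needs is \emph{integral}, not pointwise: after H\"older with exponents $a_j^{-2}$ the problem reduces to $h_p(u):=\sqrt u\int_0^\infty |\gamma_p(s)|^u\,ds \le h_p(2)$ for all $u\ge 2$. A pointwise majorant of the type you write cannot hold (think of what happens near zeros of $\gamma_p$), and in any case is not how the argument runs. The paper proves $h_p(u)\le h_p(2)$ by the Nazarov--Podkorytov distribution-function lemma, comparing $f=|\gamma_p|$ with $g(s)=\exp(-d_p s^2)$ where $d_p$ is chosen so that $\|f\|_2=\|g\|_2$; one must show $F-G$ changes sign exactly once, which requires (a) a Gaussian upper bound $\gamma_p(s)\le \exp(-c_p s^2)$ near $0$, with the crucial observation that $d_p\le c_p$ iff $p\ge p_0$, and (b) careful control of $F(x)$ for small $x$ via the P\'olya asymptotic $\gamma_p(s)\sim \Gamma(p+1)\sin(\pi p/2)\,s^{-(p+1)}$ with an explicit error bound. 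Your identification of the role of $\sin(\pi p/2)$ and the reason for the $dist(p,2\N)$ hypothesis is correct, but the analytic machinery you would need --- the single-crossing of distribution functions and the quantitative bounds on $|\gamma_p|$ in the intermediate range via spline approximations of $e^{-r^p}$ --- is substantially more delicate than a ``saddle-point / integration-by-parts'' sketch suggests.
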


Most likely, the assumption that $p$ has a certain distance to the even integers is not necessary. By the result of \cite{ENT}, it is not needed, if $p \ge 10^{15}$.

\begin{theorem}\label{th2}
Let $20 < p < p_0$, $dist(p, 2 \N) \ge (\frac{14}{15})^p$. Then for all $n \in \N$, $n \ge 2$ and $a \in S^{n-1}$ with $a_1 \le \frac 1 {\sqrt 2}$
$$A_{n,p}(a) \le  \lim_{n \to \infty} A_{n,p}(a^{(n)}) = \left(\frac 6 \pi \frac{\Gamma (1+ \frac 1 p)^3}{\Gamma(1+ \frac 3 p)} \right)^{\frac 1 2} \ . $$
The result is also true if $a_1 \ge \frac 1 {\sqrt 2} + \frac 1 {3p} + \frac 1 {150}$.
\end{theorem}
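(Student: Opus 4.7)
The plan is to use the Fourier-analytic integral representation from Proposition~\ref{prop1},
$$A_{n,p}(a) = \frac{p}{\pi} \int_0^\infty \prod_{j=1}^n \Phi_p(a_j t)\, dt,$$
where $\Phi_p$ is the normalized Fourier-type kernel (with $\Phi_p(0)=1$) attached to the density $e^{-|x|^p}$. A central-limit computation along the diagonal $a=a^{(n)}$, based on the Taylor expansion $\Phi_p(s) = 1 - \beta_p s^2 + O(s^4)$, identifies the target
$$I_p^\infty := \lim_{n\to\infty} A_{n,p}(a^{(n)}) = \frac{p}{\pi} \int_0^\infty \exp(-\beta_p t^2)\, dt.$$
If the pointwise bound $\Phi_p(s) \le \exp(-\beta_p s^2)$ were valid for all $s\ge 0$, then using $\sum_j a_j^2 = 1$ would give $\prod_j \Phi_p(a_j t) \le \exp(-\beta_p t^2)$ and hence $A_{n,p}(a) \le I_p^\infty$ immediately. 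This global envelope fails for $p<p_0$ --- that is precisely Oleszkiewicz's obstruction --- so the argument must be local.

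Accordingly I would split the integral at a threshold $T_p$ and establish the Gaussian envelope only on the inner range $[0, T_p]$. Since $a_j \le a_1 \le 1/\sqrt{2}$, the arguments $a_j t$ for $t\le T_p$ all lie in $[0, T_p/\sqrt{2}]$; selecting $T_p$ so that $\Phi_p(s) \le \exp(-\beta_p s^2)$ can be verified on this subinterval from the analytic power series of $\Phi_p$ (using that the quartic Taylor coefficient has the correct sign and dominates the remainder) yields
$$\int_0^{T_p} \prod_{j=1}^n \Phi_p(a_j t)\, dt \le \int_0^{T_p} \exp(-\beta_p t^2)\, dt \le \frac{\pi}{p}\, I_p^\infty.$$
The hypothesis $p>20$ is essentially the value at which the Gaussian-radius window $[0, T_p/\sqrt 2]$ becomes wide enough for this verification.

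For the outer range $[T_p, \infty)$ I would invoke the standard oscillatory-integral decay $|\Phi_p(s)| \le C_p |s|^{-1-1/p}$, valid for $p \notin 2\N$, with a constant $C_p$ that blows up as $p$ approaches an even integer; the condition $\mathrm{dist}(p, 2\N) \ge (14/15)^p$ keeps $C_p$ under control. Since $a_1 \le 1/\sqrt{2}$ forces $\sum_{j\ge 2} a_j^2 \ge 1/2$, at least two coordinates contribute non-trivially to the product, giving decay of order $t^{-2-2/p}$ and an integrable tail bounded by a small fraction of $I_p^\infty$ once $p > 20$.

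For the complementary regime $a_1 \ge 1/\sqrt{2} + 1/(3p) + 1/150$, the substitution $u = a_1 t$ isolates the leading factor $\Phi_p(u)$, and the remaining direction $(a_2/a_1, \ldots, a_n/a_1)$ has $\ell_2$-norm $\sqrt{1-a_1^2}/a_1 < 1$ with the first coordinate now bounded away from $1/\sqrt{2}$; a monotonicity argument in $a_1$ then compares $A_{n,p}(a)$ with the explicit coordinate-section value $A_{n,p}(e_1)$, which lies below $I_p^\infty$. The main obstacle throughout is the joint calibration of $T_p$ and $\beta_p$: because Ball's inequality fails in this range, one cannot target $A_{n,p}(a^{(2)})$, and the Gaussian envelope must match $I_p^\infty$ precisely on $[0,T_p]$ while the oscillatory tail closes the gap --- this is what forces the quantitative lower bound $p>20$ and the distance condition on $p$.
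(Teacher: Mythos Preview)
Your splitting strategy has a genuine budget gap that does not close. On the inner range you bound $\int_0^{T_p}\prod_j\gamma_p(a_jt)\,dt$ by $\int_0^{T_p}\exp(-c_p t^2)\,dt$; this leaves only the Gaussian tail $\int_{T_p}^\infty\exp(-c_p t^2)\,dt$ as budget for the outer range, and that quantity is exponentially small in $T_p$. But for $a=a^{(2)}$ the outer contribution is $\sqrt 2\int_{T_p/\sqrt 2}^\infty\gamma_p(s)^2\,ds$, which for $T_p\approx 3\sqrt 2$ (the largest $T_p$ your envelope allows, by Proposition~\ref{prop3}) picks up the first bump of $|\gamma_p|$ near $s\simeq 4.5$ of height $\simeq 0.2$ and is of order $10^{-1}$, far exceeding the Gaussian tail budget of order $10^{-2}$. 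The inequality $A_{n,p}(a^{(2)})\le h_p(\infty)$ \emph{is} true for $p<p_0$, but your accounting cannot see this because the pointwise envelope on $[0,T_p]$ throws away the slack $\int_0^{T_p}(e^{-c_p t^2}-\gamma_p(t/\sqrt 2)^2)\,dt$ that would be needed to pay for the tail. (Incidentally, the asymptotic decay of $\gamma_p$ is $s^{-(p+1)}$, not $s^{-1-1/p}$; this does not rescue the argument.)

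The paper avoids this by first applying H\"older's inequality with exponents $a_j^{-2}$ to reduce to the one-variable statement $h_p(u):=\sqrt u\int_0^\infty|\gamma_p(s)|^u\,ds\le h_p(\infty)$ for all $u\ge 2$ (Theorem~\ref{th3}(b)). That inequality is then proved by the Nazarov--Podkorytov distribution-function method (Proposition~\ref{prop2}): one shows that the distribution functions of $|\gamma_p|$ and $\exp(-c_p s^2)$ cross exactly once, which gives the global comparison of $L^u$-norms without any pointwise splitting. The restriction $p>20$ and the distance condition $\mathrm{dist}(p,2\N)\ge(14/15)^p$ enter in establishing that single crossing, via the approximations of $\gamma_p$ in Lemmas~\ref{lem3}--\ref{lem4} and Proposition~\ref{prop4}. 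For the regime $a_1\ge\frac1{\sqrt2}+\frac1{3p}+\frac1{150}$, the paper uses the geometric cylinder projection giving $A_{n,p}(a)\le 1/a_1$, then checks $1/a_1\le\bigl(\tfrac6\pi\,\Gamma(1+\tfrac1p)^3/\Gamma(1+\tfrac3p)\bigr)^{1/2}$ by a convexity argument; your substitution-plus-monotonicity sketch is too vague to stand in for this.
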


The first part of \ref{th2} probably holds for all $2 < p < p_0$. The restriction $p > 20$ is required by estimates in the proof. \\

We sketch the main ideas of the proof of both theorems and describe the organization of the paper. As in Ball's cube slicing result, the proofs use Fourier transform representations of moments of independent sums, a technique also employed by Haagerup \cite {H} in his proof of the best constants in the Khintchine inequality. Ball's result is based on  P\'olya's \cite{Po1} formula $A_{n,\infty}(a) = \frac 2 \pi \int_0^\infty \prod_{j=1}^n \gamma_\infty(a_j s) \ ds $, $a=(a_j)_{j=1}^n \in S^{n-1}$, with $\gamma_\infty(s) = \frac{\sin(s)} s$, $s \ne 0$. The main ingredient of his proof is the integral inequality $h_\infty(u) \le h_\infty(2)$ for all $u \ge 2$, where $h_\infty(u) := \sqrt u \int_0^\infty |\gamma_\infty(s)|^u \ ds $, which together with H\"older's inequality allows to prove \\ $A_{n,\infty}(a) \le A_{n,\infty}(a^{(2)})$ for all vectors $a$ with $|a_j| \le \frac 1 {\sqrt 2}$, $j=1, \cdots , n$. There is an analogue of P\'olya's formula for $A_{n,p}(a)$ in terms of the normalized Fourier transform $\gamma_p(s)$ of $\exp(-|r|^p)$, see Proposition \ref{prop1}. Let $h_p(u) := \sqrt u \int_0^\infty |\gamma_p(s)|^u \ ds$. Our main technical result is Theorem \ref{th3}, namely that $h_p(u) \le h_p(2)$ for all $u \ge 2$, provided that $p \ge p_0$ and that $p$ is not too close to an even integer. Together with H\"older's inequality and Proposition \ref{prop1}, this yields a proof of Theorem \ref{th1} for hyperplanes with normal unit vectors which have only coordinates of modulus $\le \frac 1 {\sqrt 2}$, see Section 2. \\
Nazarov and Podkorytov \cite{NP} found an ingenious proof of Ball's integral inequality, using the distribution functions of $f(s)=|\gamma_\infty(s)|$ and $g(s) = \exp(-\frac{s^2}{2 \pi})$, cf. Proposition \ref{prop2}. We apply their method to prove the integral inequality in Theorem \ref{th3}. Consider $f(s) = |\gamma_p(s)|$ and $g(s) = \exp(-d_p s^2)$, where $d_p>0$ is such that $||f||_{L_2(0,\infty)} = ||g||_{L_2(0,\infty)}$. Nazarov and Podkorytov's method yields inequalities of the form $\int_0^\infty |f(s)|^u \ ds \le \int_0^\infty |g(s)|^u \ ds$ for all $u \ge 2$, provided that the difference of the distribution functions $F-G$ of $f-g$ changes sign just once in the right direction. Then
\begin{align*}
h_p(u) & = \sqrt u \int_0^\infty |\gamma_p(s)|^u \ ds \le \sqrt{u} \int_0^\infty \exp(-d_p s^2)^u \ ds \\
& = \sqrt{2} \int_0^\infty \exp(-d_p s^2)^2 \ ds = h_p(2)
\end{align*}
by the definition of $d_p$. To have $F(x)-G(x) < 0$ for values $x$ close to $x < 1$, one needs that $f(s) = |\gamma_p(s)| \le \exp(-d_p s^2) = g(s)$ for $s$ near zero. We show that $|\gamma_p(s)| \le \exp(-c_p s^2)$ holds for $0 \le s \le 3$, see Proposition \ref{prop3}, where $c_p > 0$ is the optimal constant for this inequality close to $s=0$. To have $f(s) \le g(s)$ near $0$, one needs that $d_p \le c_p$ holds. This condition is just Oleszkiewicz' restriction on $p$, namely $p \ge p_0$, see Lemma \ref{lem1}. While the distribution function of $g$ is simple, $G(x) = \sqrt{\frac 1 {d_p} \ln(\frac 1 x)}$, the distribution function of $f = |\gamma_p|$ is rather complicated. However, its determination is independent of the specific index $p_0$. While the functions $\gamma_p$ are positive for $1 \le p \le 2$, see Koldobsky \cite{K}, for $p>2$, $p \notin 2 \N$, they have finitely many real zeros, and for large arguments $s$ show a negative power type decay, see P\'olya \cite{Po}. We prove an error estimate for this decay for large $s$, see Proposition \ref{prop4}. For $p>2$, $p \in 2 \N$, the functions $\gamma_p$ have infinitely many real zeros and decay exponentially fast at infinity, see Boyd \cite{Bo} for an asymptotic expansion, but without error estimates. For relatively small $s>0$, the functions $\gamma_p$ show a damped $\frac{\sin(s)} s$-type behavior, with faster decay for smaller $p$. To find estimates the distribution function $F$ of $|\gamma_p|$, we approximate $\exp(-|r|^p)$ by linear splines and calculate their Fourier transform, which are roughly of $\frac{\sin(s)} s$-type decay for $0 \le s \le c p$, see Lemmas \ref{lem3} and \ref{lem4}. Their distribution functions are determined in Proposition \ref{prop5} and in the proof of Theorem \ref{th3}. For large $p$, say $p \ge 400$, the simple approximation of $\gamma_p$ in Lemma \ref{lem3} suffices, but for $p_0 \le p \le 400$, we need the better approximation of $\gamma_p$ in Lemma \ref{lem4}. Lemmas \ref{lem6} and \ref{lem7} are required to estimate the size of the first bumps of $|\gamma_p|$ and the distribution function $F(x)$ for $\frac 1 {20} \le x \le \frac 1 5$. \\
The difficulty determining $F$ is two-fold: first, it depends on $p$, and the decay of $|\gamma_p|$ for $0 \le s \le p$ is faster for smaller $p$, which requires a better approximation of $\gamma_p$, and secondly, for $p$ close to an even integer, the decay for large $s$ is difficult to handle: for $p \notin 2 \N$, the distribution function $F$ increases of inverse power type $\beta(p) x^{-\frac 1 {p+1}}$ for small $x > 0$ with $\lim_{dist(p, 2 \N) \to 0} \beta(p) =0$, while for $p \in 2 \N$ it increases just logarithmically, $(\ln (\frac 1 x))^{1-\frac 1 p}$. The last fact lead to the requirement in both theorems that $p$ should not be too close to the even integers. Our arguments are analytical, but for relatively small $p$ we rely on the numerical evaluation of some integrals. \\
In the case $a_1 > \frac 1 {\sqrt 2}$ Ball projects the hyperplane onto $(a^{(1)})^\perp$ and estimates the distortion factor. This method works in $l_p^n$-spaces only for $a_1 \ge \frac{2^{\frac 1 p}}{\sqrt 2}$, leaving the gap between $\frac 1 {\sqrt 2}$ and $\frac{2^{\frac 1 p}}{\sqrt 2}$. To close this gap, one would need a new argument, possibly of geometric nature. The technique used by Eskenazis, Nayar and Tkocz \cite{ENT} to cover the case $a_1 > \frac 1 {\sqrt 2}$ only works for very large $p$. \\

\section{Formulas and proofs}

Ball \cite{B} used  P\'olya's \cite {Po1} result
\begin{equation}\label{eq2.1}
A_{n,\infty}(a) = \frac 2 \pi \int_0^{\infty} \prod_{j=1}^n \frac{\sin(a_j s)}{a_j s} \ ds \ , \ a=(a_j)_{j=1}^n \in S^{n-1}
\end{equation}
to prove his theorem for $l_\infty^n$. There is a corresponding formula for $l_p^n$:

\begin{proposition}\label{prop1}
Let $1 \le p < \infty$, $n \in \N$ and $a = (a_j)_{j=1}^n \in S^{n-1}$, $a_j \ge 0$. Then
\begin{equation}\label{eq2.2}
A_{n,p}(a) = \Gamma \left(1+\frac 1 p \right) \frac 2 \pi \int_0^{\infty} \prod_{j=1}^n \gamma_p(a_j s) \ ds \ ,
\end{equation}
where
$$\gamma_p(s) := \frac 1 {\Gamma \left(1+ \frac 1 p \right)} \int_0^\infty \cos(s r) \ \exp(-r^p) \ dr \ , \ \gamma_p(0)=1 \ . $$
\end{proposition}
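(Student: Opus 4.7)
The plan is to derive the formula by combining the polar-coordinate representation of volumes in star bodies with Fourier inversion on the line, after recognizing $\gamma_p$ as the characteristic function of the symmetric probability density $g_p(r) := \frac{1}{2\Gamma(1+1/p)} e^{-|r|^p}$ on $\R$. The identity $\int_\R \cos(sr) e^{-|r|^p}\, dr = 2\Gamma(1+1/p)\gamma_p(s)$ is immediate from the evenness of $g_p$.

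The first step is to pass from the geometric ratio $A_{n,p}(a)$ to a $p$-th exponential integral. For any $m$-dimensional star body $K$ with Minkowski functional $\|\cdot\|_K$, polar coordinates together with $\int_0^\infty e^{-r^p} r^{m-1}\, dr = \Gamma(m/p)/p$ give the Laplace-type identity $\int_{\R^m} e^{-\|x\|_K^p}\, dx = \Gamma(1+m/p)\,\vol_m(K)$. Applied with $m=n-1$ first to $K = B_p^n \cap a^\perp$ (whose Minkowski functional is the restriction of $\|\cdot\|_p$) and then to $K = B_p^{n-1}$, this reduces the claim to proving
\[
\int_{a^\perp} e^{-\|x\|_p^p}\, dx \;=\; \frac{(2\Gamma(1+1/p))^n}{\pi} \int_0^\infty \prod_{j=1}^n \gamma_p(a_j s)\, ds.
\]

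For this last identity I would use a probabilistic reformulation. Let $Y_1,\ldots,Y_n$ be i.i.d.\ with density $g_p$, so the joint density on $\R^n$ is $(2\Gamma(1+1/p))^{-n} e^{-\|y\|_p^p}$. Integrating the joint density over $a^\perp$ produces the density at $0$ of $S := \sum_j a_j Y_j$; on the other hand, since each $Y_j$ has characteristic function $\gamma_p$, Fourier inversion gives the same quantity as $\frac{1}{2\pi}\int_\R \prod_j \gamma_p(a_j t)\, dt$. Using evenness of $\gamma_p$ to halve the integral and clearing the constants produces the desired equality and hence (\ref{eq2.2}).

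The only genuine technical point I expect to have to address is the justification of pointwise Fourier inversion, i.e.\ that $t \mapsto \prod_j \gamma_p(a_j t)$ is absolutely integrable so that the inverse-Fourier-transform formula recovers the continuous density of $S$ at zero. This rests on $|\gamma_p| \le 1$ together with the classical bound $|\gamma_p(s)| = O(|s|^{-p-1})$ for $p \notin 2\N$ (exponential decay for $p \in 2\N$), which makes the product dominated by any single factor $|\gamma_p(a_j s)|$ with $a_j > 0$, and hence $L^1(\R)$ after a scaling. The paper later sharpens these decay estimates (Proposition \ref{prop4}), but the crude form suffices here, so this Fubini/inversion step is a verification rather than the real obstacle.
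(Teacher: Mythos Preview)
Your argument is correct and is precisely the standard Fourier-transform proof that the paper cites from Koldobsky~\cite{K}, Theorem~3.2: reduce the volume ratio to the exponential integral via the polar identity $\int_{\R^m} e^{-\|x\|_K^p}\,dx = \Gamma(1+m/p)\,\vol_m(K)$, interpret $\int_{a^\perp} e^{-\|x\|_p^p}\,dx$ as $(2\Gamma(1+1/p))^n$ times the density at $0$ of $\sum_j a_j Y_j$ for i.i.d.\ $Y_j$ with density $g_p$, and recover that density by Fourier inversion using that the characteristic function of $Y_j$ is $\gamma_p$. The paper itself does not supply a proof but refers to this approach and, alternatively, to the derivation in~\cite{K1} via the Eskenazis--Nayar--Tkocz formula~\eqref{eq2.3}; your route is the former. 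Your integrability check for the inversion step is also fine: the $O(|s|^{-p-1})$ decay (for $p\notin 2\N$, including $1\le p<2$ where $\gamma_p$ is the $p$-stable density) or exponential decay (for $p\in 2\N$) already makes a single factor $\gamma_p(a_j\,\cdot\,)$ with $a_j>0$ integrable, so the product is in $L^1(\R)$.
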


A proof of \eqref{eq2.2} by Fourier transform techniques can be found in Koldobsky \cite{K}, Theorem 3.2, using the normalization with
$\vol_{n-1}(B_p^{n-1}) = \frac{(2 \Gamma(1+ \frac 1 p))^{n-1}}{\Gamma(1 + \frac {n-1} p)}$. A different proof is given in K\"onig \cite{K1}, as a consequence of the following formula of Eskenazis, Nayar and Tkocz \cite{ENT}, used in the proof of their resilience of cube slicing result:
\begin{equation}\label{eq2.3}
A_{n,p}(a) = \Gamma \left(1 + \frac 1 p \right) \ \E_{(\xi,R)} \ \| \sum_{j=1}^n a_j R_j \xi_j \|_2^{-1} \ ;
\end{equation}
here the $\xi_j$ are i.i.d. random vectors uniformly distributed on the unit sphere $S^2 \subset \R^3$ and the $R_j$ are i.i.d. positive random variables with density \\ $\frac p {\Gamma(1 + \frac 1 p)} x^p \exp(-x^p) 1_{[x>0)}(x)$, independent of the $\xi_j$'s. Theorem \ref{th1} amounts to determining the optimal constant in a Khintchine type inequality for the $L_{-1}$-norm with respect to these (combined) variables.\\

For $1 \le p < 2$, the $\gamma_p$ are just the (positive) densities of the $p$-stable random variables. However, we need the formula for $2 < p < \infty$. By P\'olya \cite{Po}, for $2 < p < \infty$ and $p \notin 2 \N$, the $\gamma_p$ have real zeros, but only finitely many and infinitely many complex zeros. For $4 \le p \in 2 \N$, the $\gamma_p$ have infinitely many real zeros and no complex zeros, see \cite{Po} or Boyd \cite{Bo}. The order of decay of $\gamma_p$ is very different in the cases $p \notin 2 \N$ and $p \in 2 \N$, in the first case of negative power type, in the second of exponential decay. For $p \to \infty$, $\exp(-x^p) \to 1_{[0,1)}(x)$ pointwise. \\

By the dominated convergence theorem, $\gamma_p(s) \to \int_0^1 \cos(sr) dr = \frac{\sin(s)} s$ for $p \to \infty$. Ball used \eqref{eq2.1} and his integral inequality $h_\infty(u) \le h_\infty(2)$ for all $u \ge 2$, where $h_\infty(u) := \sqrt u \int_0^\infty |\frac{\sin(s)} s |^u ds$. Our proofs of Theorems \ref{th1} and \ref{th2} are based on \eqref{eq2.2} and the following analogue of Ball's integral inequality for $l_p^n$, which is the main technical result of this paper.

\begin{theorem}\label{th3}
Let $2 < p < \infty$, $c_p := \frac 1 6 \frac{\Gamma(1+ \frac 3 p)}{\Gamma(1 + \frac 1 p)}$, $d_p := \frac{(2^{\frac 1 p} \Gamma(1+ \frac 1 p))^2}{2 \pi}$. Put $$h_p(u) := \sqrt u \int_0^\infty |\gamma_p(s)|^u \ ds \ , \ 2 \le u < \infty \ . $$
Then
$$h_p(2) = \frac 1 2 \sqrt{\frac \pi {d_p}} \text{ \  and  \ } h_p(\infty) := \lim_{u \to \infty} h_p(u) = \frac 1 2 \sqrt{\frac \pi {c_p}} \ . $$
a) If $d_p \le c_p$, we have  $h_p(u) \le h_p(2)$ for all $u \ge 2$, assuming additionally $dist(p, 2 \N) \ge \frac 1 {A^p}$, where $A=2$. It also holds for $A=p$, if $p \ge 50$ and for $A=p^5$, if $p \ge 175$. \\
b) If $c_p < d_p$ and $p \ge 20$, we have $h_p(u) \le h_p(\infty)$ for all $u \ge 2$, assuming additionally $dist(p, 2 \N) \ge (\frac{14}{15})^p$.
\end{theorem}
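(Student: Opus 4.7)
The endpoint identities are quick. The value $h_p(2) = \frac{1}{2}\sqrt{\pi/d_p}$ follows from Plancherel: the even extension of $\exp(-|r|^p)$ has Fourier transform $2\Gamma(1+1/p)\,\gamma_p(s)$, so $\int_0^\infty |\gamma_p|^2\,ds$ evaluates to $\pi/(4 d_p)$ after using $\int_0^\infty \exp(-2 r^p)\,dr = 2^{-1/p}\Gamma(1+1/p)$. For $h_p(\infty) = \frac{1}{2}\sqrt{\pi/c_p}$ one applies Laplace's method to the Taylor expansion $\gamma_p(s) = 1 - c_p s^2 + O(s^4)$ near $s=0$, whose quadratic coefficient is $\gamma_p''(0)/2 = -\Gamma(1+3/p)/(6\Gamma(1+1/p)) = -c_p$.

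For the main inequality I would follow Nazarov and Podkorytov (Proposition~\ref{prop2}). Set $f(s) := |\gamma_p(s)|$ and
\[
g(s) := \exp(-d_p s^2)\quad\text{in case (a)},\qquad g(s) := \exp(-c_p s^2)\quad\text{in case (b)},
\]
so that $\sqrt{u}\int_0^\infty g(s)^u\,ds$ equals $h_p(2)$ in case (a) and $h_p(\infty)$ in case (b), identically in $u$. By the definition of $d_p$, case (a) has $\int_0^\infty f^2\,ds = \int_0^\infty g^2\,ds$; in case (b) the hypothesis $c_p < d_p$ gives $\int_0^\infty f^2\,ds < \int_0^\infty g^2\,ds$. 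The NP lemma then reduces both cases to the same single-sign-change condition on $F - G$, where $F(y) := |\{s > 0 : f(s) > y\}|$ and $G(y) := |\{s > 0 : g(s) > y\}|$: there should exist $y_* \in (0,1)$ with $F(y) \le G(y)$ for $y_* < y < 1$ and $F(y) \ge G(y)$ for $0 < y < y_*$. Granted this, the NP monotonicity, combined with the $u = 2$ baseline, yields $\int_0^\infty f^u\,ds \le \int_0^\infty g^u\,ds$ for all $u \ge 2$, which is the desired bound in both cases.

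The main obstacle is verifying the single-sign-change condition, i.e.\ controlling the oscillatory, non-elementary function $F$ on all scales. Since $G(y) = d_p^{-1/2}\sqrt{\ln(1/y)}$ (resp.\ $c_p^{-1/2}\sqrt{\ln(1/y)}$) is explicit, everything reduces to estimates on $F$. Near $y = 1$, Proposition~\ref{prop3} gives $|\gamma_p(s)| \le \exp(-c_p s^2)$ for $0 \le s \le 3$, which combined with the standing hypothesis on $c_p, d_p$ yields $F \le G$ for $y$ close to $1$. Near $y = 0$, if $p \notin 2\N$, P\'olya's asymptotic with the quantitative error of Proposition~\ref{prop4} gives $|\gamma_p(s)| \sim \beta(p)\,s^{-p-1}$ and hence $F(y) \sim \beta(p)^{1/(p+1)}\,y^{-1/(p+1)}$, which dominates $G(y)$ for $y$ sufficiently small; this is precisely why the distance hypothesis $dist(p, 2\N) \ge A^{-p}$ is essential, since $\beta(p) \to 0$ as $p$ approaches an even integer (and for $p \in 2\N$ itself the decay of $|\gamma_p|$ becomes exponential, making $F$ only logarithmic at $0$). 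For the bulk range of $y$ one approximates $\exp(-r^p)$ by a linear spline and uses the explicit Fourier transform of the spline (Lemmas~\ref{lem3},~\ref{lem4}) to model $|\gamma_p|$ as a damped $\sin(s)/s$-profile on $0 \le s \le c p$, from which $F$ can be bounded bump-by-bump; Lemmas~\ref{lem6} and~\ref{lem7} treat the more delicate range $y \in [\tfrac{1}{20}, \tfrac{1}{5}]$ corresponding to the first few bumps of $|\gamma_p|$.

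With these ingredients assembled, one partitions the $y$-axis into the peak region near $1$, the bump region, and the tail region near $0$, and checks on each that $F - G$ has the predicted sign and that the crossing $y_*$ is unique. The graduated hypotheses $dist(p, 2\N) \ge 2^{-p},\ p^{-p},\ p^{-5 p}$ in case (a) track how tight a tail control is needed: as $p$ grows, the bump region of $\gamma_p$ stretches to larger $s$, the spline approximation becomes sharper, and one can afford $p$ to be closer to $2\N$ without losing the comparison. The restriction $p \ge 20$ in case (b) arises because the strict inequality $\int f^2 < \int g^2$ must be reconciled with the sign condition on $F - G$, which requires finer numerical control on the spline-based estimates; the stronger hypothesis $dist(p, 2\N) \ge (14/15)^p$ in that range compensates for the reduced headroom in $p$.
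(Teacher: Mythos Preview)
Your plan matches the paper's approach: Nazarov--Podkorytov applied to $f=|\gamma_p|$ versus the appropriate Gaussian, with Proposition~\ref{prop3} controlling the peak region, Proposition~\ref{prop4} the tail, and the spline approximations of Lemmas~\ref{lem3}--\ref{lem4} the middle. Two technical devices you gloss over are worth flagging, since without them ``checks that the crossing $y_*$ is unique'' is not actually executable. First, on the bump region $[x_2,x_1(p)]$ the paper does \emph{not} verify the sign of $F-G$ directly; it instead shows $|F'(x)|/|G'(x)|>1$ there via the formula $|F'(x)|=\sum_{|\gamma_p(s)|=x} 1/|\gamma_p'(s)|$ together with the derivative bounds of Lemma~\ref{lem5}, which forces $G-F$ to be strictly monotone on that range and hence to cross at most once. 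Second, the P\'olya asymptotic of Proposition~\ref{prop4} is only valid for $s\gtrsim A p^3$, i.e.\ for extremely small $y$; bridging from there back up to $y=x_2$ requires an explicit lower bound on the distribution function of $|\sin s/s|$ (Proposition~\ref{prop5}, giving $F_{sinc}(x)\ge \tfrac{2}{\pi x}-\tfrac{27}{16}$), which combined with the spline comparison handles the whole intermediate range. (Minor arithmetic slip: $\int_0^\infty|\gamma_p|^2\,ds = \tfrac{1}{2\sqrt2}\sqrt{\pi/d_p}$, not $\pi/(4d_p)$.)
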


We need some facts on the $\Gamma$-function. Since $\Gamma$ is log-convex, the Digamma function $\Psi := (\ln \Gamma)'$ is strictly increasing, $\Psi' > 0$. By Abramowitz, Stegun \cite{AS}, 6.3.16. and 6.4.10 we have for all $x > 0$
\begin{equation}\label{eq2.4}
\Psi(1+x) = -\gamma + \sum_{k=1}^\infty \frac x {k(x+k)} \ , \ \Psi'(1+x) = \sum_{k=1}^\infty \frac 1 {(k+x)^2} \ ,
\end{equation}
where $\gamma \simeq 0.5772$ is the Euler-Mascheroni constant.
In particular, $\Psi'(1) = \frac{\pi^2} 6$, $\Psi'(2) = \frac{\pi^2} 6 -1$ and $\Psi'(\frac 5 2) = \frac{\pi^2} 2 - \frac{40} 9$, \cite{AS}, 6.4.5. \\

\begin{lemma}\label{lem1}
The equation $c_p = d_p$ has exactly one solution $p_0 \in (2,\infty)$, $p_0 \simeq 26.265$, with $h_{p_0}(2) = h_{p_0}(\infty)$. For $p_0 < p < \infty$, $d_p < c_p$ and $h_p(\infty) < h_p(2)$. For $2 < p < p_0$, $c_p < d_p$ and $h_p(2) < h_p(\infty)$.
\end{lemma}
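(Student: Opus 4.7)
Set $\varphi(p) := \ln(c_p/d_p)$. Since $h_p(2)/h_p(\infty) = \sqrt{c_p/d_p}$, both parts of the conclusion follow at once from the sign of $\varphi$, so the task reduces to locating the zeros of $\varphi$ on $(2,\infty)$.

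The boundary values are explicit: direct evaluation at $p=2$ yields $c_2 = \tfrac16 \cdot \tfrac{\Gamma(5/2)}{\Gamma(3/2)} = \tfrac14 = \tfrac{2\Gamma(3/2)^2}{2\pi} = d_2$, so $\varphi(2)=0$; letting $p \to \infty$ one has $c_p \to 1/6$ and $d_p \to 1/(2\pi)$, so $\varphi(\infty) = \ln(\pi/3) > 0$.

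Differentiating with $\Psi := (\ln \Gamma)'$ gives
$$p^2 \varphi'(p) \;=\; \eta(p) \;:=\; 3\bigl[\Psi(1+1/p) - \Psi(1+3/p)\bigr] + 2\ln 2.$$
Using $\Psi(5/2)-\Psi(3/2)=2/3$ one computes $\eta(2) = 2\ln 2 - 2 < 0$ and $\eta(\infty) = 2\ln 2 > 0$. The main step is to show $\eta$ is strictly increasing on $(2,\infty)$, which amounts to the inequality $3\Psi'(1+3/p) > \Psi'(1+1/p)$. I would use the Laplace representation $\Psi'(1+x) = \int_0^\infty \tfrac{s\, e^{-xs}}{e^s-1}\, ds$ and rescale $s \mapsto s/3$ in the first term to reduce this to the pointwise inequality $\tfrac{u}{e^u-1} \ge \tfrac{3u}{e^{3u}-1}$ for $u \ge 0$, i.e.\ $e^{3u} - 3e^u + 2 \ge 0$, which holds since $y^3 - 3y + 2 = (y-1)^2(y+2) \ge 0$ for $y = e^u \ge 1$.

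Strict monotonicity of $\eta$ produces a unique critical point $p_* \in (2,\infty)$ of $\varphi$, so $\varphi$ strictly decreases on $(2, p_*)$ and strictly increases on $(p_*, \infty)$. Combined with $\varphi(2) = 0$ and $\varphi(\infty) > 0$, this forces exactly one further zero $p_0 \in (p_*, \infty)$, with $\varphi < 0$ on $(2, p_0)$ and $\varphi > 0$ on $(p_0, \infty)$. The numerical value $p_0 \simeq 26.265$ is then obtained by solving $\varphi(p_0)=0$ numerically and matches Oleszkiewicz's constant recalled in the introduction. Finally, $h_p(\infty) < h_p(2) \Leftrightarrow c_p > d_p \Leftrightarrow \varphi(p) > 0$ translates the sign analysis directly into the claims on $(p_0,\infty)$ and $(2,p_0)$. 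The main obstacle is the strict monotonicity of $\eta$, which the cubic factorization above resolves cleanly.
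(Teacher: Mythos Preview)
Your argument is correct and follows essentially the same route as the paper: both reduce the problem to showing that $\eta(p)=3[\Psi(1+1/p)-\Psi(1+3/p)]+2\ln 2$ is strictly increasing on $(2,\infty)$, then use $\eta(2)<0<\eta(\infty)$ and the boundary values $\varphi(2)=0$, $\varphi(\infty)=\ln(\pi/3)>0$ to locate the unique zero $p_0$. The only difference is in the monotonicity step: the paper uses the series $\Psi'(1+x)=\sum_{k\ge1}(k+x)^{-2}$ to compute $3\Psi'(1+3/p)-\Psi'(1+1/p)=\sum_{k\ge1}\frac{2k^2-6/p^2}{(k+3/p)^2(k+1/p)^2}>0$ for $p\ge\sqrt3$, whereas you use the Laplace representation of $\Psi'$ and the factorization $y^3-3y+2=(y-1)^2(y+2)$, which gives the inequality for all $p>0$ in one stroke and is arguably cleaner.
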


\begin{proof}
The equation $c_p = d_p$ is equivalent to Oleszkiewicz's condition in \cite{O}
$$\phi(p) := \frac 3 \pi \frac{2^{\frac 2 p} \Gamma \left(1 + \frac 1 p \right)^3}{\Gamma \left(1+ \frac 3 p \right)} = 1 \ . $$
Note that $\phi(2)=1$, $\lim_{p\to \infty} \phi(p) = \frac 3 \pi < 1$. We show that $\phi$ is strictly increasing in $p \in (2,p_1)$ and strictly decreasing in $p \in (p_1,\infty)$, where $p_1 \simeq 4.192$. We have
$$(\ln \phi)'(p) = - \frac 1 {p^2} \left(2 \ln 2 + 3 \Psi \left(1+ \frac 1 p \right) - 3 \Psi \left(1 + \frac 3 p \right) \right) \ , $$
We claim that $F(p) := \Psi(1+\frac 1 p)-\Psi(1+ \frac 3 p)$ is increasing for $p \in (2,\infty)$: Since $F'(p) = \frac 1 {p^2} (3 \Psi'(1+ \frac 3 p) - \Psi'(1+\frac 1 p))$, \eqref{eq2.4} yields $F'(p) = \frac 1 {p^2} \sum_{k=1}^\infty \frac{2 k^2 - \frac 6 {p^2}}{(k+\frac 3 p)^2 (k+\frac 1 p)^2} > 0$ for all $p \ge \sqrt 3$, in particular for $p \ge 2$. Thus $F$ and $2 \ln 2 + 3 F$ are strictly increasing in $(2,\infty)$, with $2 \ln 2 + 3 F(2) = -2(1- \ln 2) < 0$, using $\Psi(\frac 3 2) - \Psi(\frac 5 2) = - \frac 2 3$, cf. \cite{AS}, 6.3.4. However, for $p=5$, $2 \ln 2 + 3 F(5) > 0$. Hence there is exactly one zero $p_1$ of $2 \ln 2 + 3 F(p) = 0$ in the interval $(2,5)$, $p_1 \simeq 4.192$. For $2 < p < p_1$,
$(\ln \phi)' >0$, $\phi$ is increasing, and for $p_1 < p < \infty$, $(\ln \phi)' < 0$, $\phi$ is decreasing. For $p \to \infty$, $(\ln \phi)' \to 0$ and $\phi(p) \to \frac 3 \pi < 1$. Thus $\phi(p)=1$ has exactly one solution $p_0 \in (2, \infty)$, $p_0 \simeq 26.265$. One may check e.g. that $\phi(26) > 1 > \phi(27)$.
\end{proof}

\begin{lemma}\label{lem2}
(a) As a function of $p$, $d_p = \frac{(2^{\frac 1 p} \Gamma(1+ \frac 1 p))^2}{2 \pi}$ decreases in $[2,\infty)$, with $d_2 = \frac 1 4$ and $\lim_{p \to \infty} d_p = \frac 1 {2 \pi} \simeq 0.15915$. \\
(b) As a function of $p$, $c_p =  \frac 1 6 \frac{\Gamma(1+ \frac 3 p)}{\Gamma(1 + \frac 1 p)}$ decreases in $[2,p_2)$ and increases in $(p_2,\infty)$, where $p_2 \simeq 9.1147$, with $c_2= \frac 1 4$, $\lim_{p \to \infty} c_p = \frac 1 6$ and $c_{p_2} \simeq 0.15715$. \\
For $p=p_0 \simeq 26.265$ we have $c_{p_0} = d_{p_0} \simeq 0.1609$. Further $c_{15} \simeq 0.1584$.
\end{lemma}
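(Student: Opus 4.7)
The plan is to differentiate logarithmically in each case and analyse the sign of the resulting digamma expressions via the series identities \eqref{eq2.4}.

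For part (a), write $\ln d_p = -\ln(2\pi) + \tfrac{2 \ln 2}{p} + 2\ln \Gamma(1+\tfrac{1}{p})$, giving
\begin{equation*}
(\ln d_p)'(p) = -\frac{2}{p^2}\bigl(\ln 2 + \Psi(1+\tfrac{1}{p})\bigr).
\end{equation*}
Since $\Psi$ is strictly increasing on $(0,\infty)$ and $1/p \le 1/2$ for $p \ge 2$, we have $\Psi(1+1/p) \ge \Psi(1) = -\g$; combined with $\ln 2 > \g$ this gives $(\ln d_p)'(p) < 0$ on $[2,\infty)$. The boundary value $d_2 = \tfrac{1}{4}$ follows from $\Gamma(3/2) = \sqrt\pi/2$, and $\lim_{p\to\infty} d_p = \tfrac{1}{2\pi}$ from continuity of the Gamma function at $1$.

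For part (b), write analogously
\begin{equation*}
(\ln c_p)'(p) = \frac{1}{p^2}\,G(p), \qquad G(p) := \Psi(1+\tfrac{1}{p}) - 3\Psi(1+\tfrac{3}{p}),
\end{equation*}
so $c_p$ decreases exactly where $G<0$ and increases where $G>0$. Differentiating $G$ once more and expanding via \eqref{eq2.4} yields
\begin{equation*}
G'(p) = \frac{1}{p^2}\bigl(9\Psi'(1+\tfrac{3}{p}) - \Psi'(1+\tfrac{1}{p})\bigr) = \frac{1}{p^2}\sum_{k=1}^\infty \frac{8k^2 + 12 k/p}{(k+3/p)^2(k+1/p)^2} > 0,
\end{equation*}
using the elementary identity $9(k+1/p)^2 - (k+3/p)^2 = 8k^2 + 12k/p$. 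Hence $G$ is strictly increasing on $(0,\infty)$.

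To locate $p_2$, I would evaluate $G$ at the endpoints. Using the recursion $\Psi(x+1) = \Psi(x) + 1/x$ and $\Psi(1/2) = -\g - 2\ln 2$ one computes $\Psi(3/2) = -\g - 2\ln 2 + 2$ and $\Psi(5/2) = -\g - 2\ln 2 + 8/3$, whence
\begin{equation*}
G(2) = \Psi(\tfrac{3}{2}) - 3\Psi(\tfrac{5}{2}) = 2\g + 4 \ln 2 - 6 \simeq -2.07 < 0,
\end{equation*}
while $\lim_{p\to\infty} G(p) = -2\Psi(1) = 2\g > 0$. By strict monotonicity, $G$ has a unique zero $p_2 \in (2,\infty)$, yielding the claimed transition for $c_p$. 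The identities $c_2 = \tfrac{1}{6}\Gamma(5/2)/\Gamma(3/2) = \tfrac{1}{4}$ and $\lim_{p\to\infty}c_p = \tfrac{1}{6}$ are immediate from $\Gamma(5/2) = (3/2)\Gamma(3/2)$ and continuity. The specific numerical approximations $p_2 \simeq 9.1147$, $c_{p_2} \simeq 0.15715$, $c_{p_0} \simeq 0.1609$, and $c_{15} \simeq 0.1584$ are the only non-analytic ingredient, extracted by direct numerical substitution in the closed-form expressions for $G$ and $c_p$; they can be verified to whatever precision is required. The only step warranting some care is the termwise positivity in the series for $G'$, which reduces to the trivial polynomial inequality above, so no serious obstacle is anticipated.
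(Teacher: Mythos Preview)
Your proof is correct and follows essentially the same logarithmic-differentiation approach as the paper. The only minor difference is that you verify $G'(p)>0$ via the clean termwise series identity $9(k+1/p)^2-(k+3/p)^2=8k^2+12k/p$, whereas the paper bounds $9\Psi'(1+3/p)-\Psi'(1+1/p)$ using the explicit values $\Psi'(1)=\pi^2/6$ and $\Psi'(5/2)=\pi^2/2-40/9$; your version is slightly more direct.
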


\begin{proof}
(a) We have $\frac d {dp} (\ln d_p) = - \frac 2 {p^2} (\ln 2 + \Psi(1 + \frac 1 p)) < 0$. \\
(b) We get $\frac d {dp} (\ln c_p) = - \frac 1 {p^2} (\Psi(1+ \frac 1 p) - 3 \Psi(1+ \frac 3 p)) =: \frac{F(p)}{p^2}$. By \eqref{eq2.4} $\Psi'$ is decreasing and for $0 \le x \le \frac 3 2$, $\frac{\pi^2} 2 - \frac{40} 9 = \Psi'(\frac 5 2) \le \Psi'(1+x) \le \Psi'(1) = \frac{\pi^2} 6$. This implies for all $p \ge 2$ that
$$F'(p) = \frac 1 {p^2} \left(9 \Psi' \left(1+ \frac 3 p \right) - \Psi' \left(1+ \frac 1 p \right) \right) \ge \frac 1 {p^2} \left(\frac {13} 3 \pi^2 - 40 \right) > \frac 2 {p^2} >0 \ . $$
Hence $F$ is increasing in $[2,\infty)$. Since $F(9) < -0.012$, $F(10) > 0.084$, $F$ has exactly one zero $p_2 \in [2,\infty)$, $p_2 \simeq 9.1147$. Thus $c_p$ is decreasing in $[2,p_2)$ and increasing in $(p_2,\infty)$. For all $p \in [2,\infty)$, $c_p \ge c_{p_2} \simeq 0.15715$.
\end{proof}

We now prove Theorems \ref{th1} and \ref{th2} using \eqref{eq2.2} and Theorem \ref{th3} which we will verify later. \\

{\bf Proof of Theorem \ref{th1}} \\

i) Since $\gamma_p$ is the normalized Fourier cosine transform of $\exp(-r^p)$, Plancherel's theorem yields
\begin{align}\label{eq2.5}
h_p(2) & = \sqrt 2 \int_0^\infty \gamma_p(s)^2 \ ds = \frac{\sqrt 2}{\Gamma \left(1+\frac 1 p \right)^2} \frac{\pi} 2 \int_0^\infty \exp(-r^p)^2 \ dr \nonumber \\
& = \frac 1 {\Gamma \left(1+ \frac 1 p \right)^2} \frac\pi {\sqrt 2} \frac 1 {p 2^{\frac 1 p}} \int_0^\infty v^{\frac 1 p -1} \ \exp(-v) \ dv = \frac \pi {2^{\frac 1 2 + \frac 1 p} \Gamma \left(1 + \frac 1 p \right) } = \frac 1 2 \sqrt{\frac \pi {d_p}} \ .
\end{align}
For $u \ge 2$, $h_p(u) = \int_0^\infty \left|\gamma_p \left(\frac s {\sqrt u} \right) \right|^u \ ds$. Since $\cos(sr) = 1 - \frac{s^2r^2} 2 + O(r^4)$ near $r=0$  and
\begin{align*}
\frac 1 {\Gamma \left(1 + \frac 1 p \right)} \int_0^\infty \left(1-\frac{s^2}{2 u} r^2 \right) \ \exp(-r^p) \ dr & = 1 - \frac 1 {2 p} \frac{\Gamma \left(\frac 3 p \right)}{\Gamma \left(1+\frac 1 p \right)} \frac {s^2} u \\
& = 1 - \frac 1 6 \frac{\Gamma \left(1+\frac 3 p \right)}{\Gamma \left(1+\frac 1 p \right)} \frac{s^2} u = 1 - c_p \frac{s^2} u \ ,
\end{align*}
the dominated convergence theorem yields
\begin{align}\label{eq2.6}
h_p(\infty) & := \lim_{u \to \infty} h_p(u) = \lim_{u \to \infty} \int_0^\infty \left(1-c_p \frac {s^2} u \right)^u \ ds \nonumber \\
& = \int_0^\infty \exp(-c_p s^2) \ ds = \frac 1 2 \sqrt{\frac \pi {c_p}} = \sqrt{\frac {3 \pi} 2 \frac{\Gamma(1+\frac 1 p)}{\Gamma(1+\frac 3 p)}} \ .
\end{align}

ii) Let $a=(a_j)_{j=1}^n \in S^{n-1}$. Assume first that all $a_j$ satisfy $0 \le a_j \le \frac 1 {\sqrt 2}$. We may assume $a_j>0$ for all $j$. In Theorem \ref{th1} $d_p \le c_p$ and hence by Theorem \ref{th3} $h_p(u) \le h_p(2)$ for all $u \ge 2$. Then \eqref{eq2.2}, H\"older's inequality with $\sum_{j=1}^n a_j^2 =1$ and $h_p(u) \le h_p(2)$ for $u = a_j^{-2} \ge 2$ imply
\begin{align*}
A_{n,p}(a) & = \Gamma \left(1+\frac 1 p \right) \frac 2 \pi \int_0^\infty \prod_{j=1}^n \gamma_p(a_j s) \ ds \\
& \le  \Gamma \left(1+\frac 1 p \right) \frac 2 \pi \prod_{j=1}^n \left(\int_0^\infty |\gamma_p(a_j s)|^{a_j^{-2}} \ ds \right)^{a_j^2} \\
& = \Gamma \left(1+\frac 1 p \right) \frac 2 \pi \prod_{j=1}^n \left(\frac 1 {a_j} \int_0^\infty |\gamma_p(s)|^{a_j^{-2}} \ ds \right)^{a_j^2} \\
& = \Gamma \left(1+\frac 1 p \right) \frac 2 \pi \prod_{j=1}^n h_p(a_j^{-2})^{a_j^2} \\
& \le \Gamma \left(1+\frac 1 p \right) \frac 2 \pi h_p(2) = 2^{\frac 1 2 - \frac 1 p} = A_{n,p}(a^{(2)}) \ ,
\end{align*}
where the last two equalities are a direct consequence of the formula for $h_p(2)$ in \eqref{eq2.5} and of \eqref{eq2.2}. \\

iii) Let $a=(a_j)_{j=1}^n \in S^{n-1}$, $a_j \ge 0$ and $a_1 > \frac 1 {\sqrt 2}$. Then $a_j < \frac 1 {\sqrt 2}$ for all $j \ge 2$. Consider the "cylinder"
$$Z := \{ x = (x_1 \etc x_n) \in \R^n \ | \ (x_2 \etc x_n) \in B_p^{n-1} \ , \ x_1 \in\R \ \} \ . $$
Then $B_p^n \cap a^\perp \subset Z \cap a^\perp$. Let $Q : \R^n \to \R^n$ be the orthogonal projection onto $(a^{(1)})^{\perp}$ and $T = \R^n \to \R^n$ be given by $T(t a + x) = t a^{(1)} + Q x$, where $x \in a^\perp$. In matrix terms
$$T = \left( \begin{array}{rrrr}
a_1 & a_2 & ...  & a_n \\
-a_2a_1 & 1-a_2^2 & ... & -a_2a_n \\
... & ... & ... & ...  \\
-a_na_1 & ... & -a_na_{n-1} & 1-a_n^2
\end{array} \right) \ . $$
Then $Ta = a^{(1)}$ and for any $x =(x_1, x_2 \etc x_n)  \in  \R^n$ with $\langle a,x \rangle = 0$ we have $Tx = (0,x_2 \etc x_n)$. Thus $T$ projects $Z \cap a^\perp$ onto $B_p^{n-1} = B_p^n \cap (a^{(1)})^\perp$. Calculation shows that $\det (T) = a_1$ and hence
$$A_{n,p}(a) \le \frac{\vol_{n-1}(Z \cap a^\perp)}{\vol_{n-1}(B_p^{n-1})} = \frac 1 {a_1} \le 2^{\frac 1 2 - \frac 1 p} = A_{n,p}(a^{(2)}) \ , $$
if $a_1 \ge 2^{\frac 1 p - \frac 1 2}$. This proves $A_{n,p}(a) \le A_{n,p}(a^{(2)})$ for all $a \in S^{n-1}$ except possibly for those $a$ with
$a_1 \in (\frac 1 {\sqrt 2},\frac {2^{\frac 1 p}}{\sqrt 2}) = I_p$, an interval of length $|I_p| < \frac 1 {2p}$ for $p \ge p_0$. \\

iv) Suppose that $a_1 \in I_p$ or that $dist(p, 2 \N) < 2^{-p}$. If $a_1 \in I_p$, choose $2 < p < q < \infty$ with
$\frac 1 {\sqrt 2} < \frac{2^{\frac 1 q}}{\sqrt 2} < a_1$ and $dist(q, 2 \N) \ge 2^{-q}$. If $a_1 \notin I_p$, but $dist(p, 2 \N) < 2^{-p}$, choose $2 < p < q < \infty$ with $dist(q, 2 \N) \ge 2^{-q}$. Since $A_{n,p}(a)$ is increasing in $p$ by Meyer, Pajor \cite{MP}, we have $A_{n,p}(a) \le A_{n,q}(a)$. Then by iii)
$$A_{n,p}(a) \le A_{n,q}(a) \le A_{n,q}(a^{(2)}) = 2^{\frac 1 p - \frac 1 q} A_{n,p}(a^{(2)}) < 2^{\frac 1 p} A_{n,p}(a^{(2)}) \ . $$     \hfill $\Box$

{\bf Proof of Theorem \ref{th2}} \\

i) Let $a=(a_j)_{j=1}^n \in S^{n-1}$ with $0 \le a_j \le \frac 1 {\sqrt 2}$ for all $j=1 \etc n$. We may assume $a_j >0$ for all $j$. Suppose that $20 < p < p_0$. Then $c_p < d_p$ and $h_p(u) \le h_p(\infty)$ for all $u \ge 2$ by Theorem \ref{th3}. Similarly as in part ii) of the previous proof we find with $u = a_j^{-2} \ge 2$
\begin{align*}
A_{n,p}(a) & \le \Gamma \left(1+\frac 1 p \right) \frac 2 \pi \left(\prod_{j=1}^n h_p(a_j^{-2}) \right)^{a_j^2} \\
& \le \Gamma \left(1+\frac 1 p \right) \frac 2 \pi h_p(\infty) = \sqrt{\frac 6 \pi \frac{\Gamma \left(1+\frac 1 p \right)^3}{\Gamma \left(1 + \frac 3 p \right)}} = \lim_{n \to \infty} A_{n,p}(a^{(n)}) \ .
\end{align*}
The last equality is a consequence of \eqref{eq2.2} and the dominated convergence theorem,
\begin{align*}
\lim_{n \to \infty} A_{n,p}(a^{(n)}) & = \lim_{n \to \infty} \Gamma \left(1 + \frac 1 p \right) \frac 2 \pi \int_0^\infty \gamma_p \left(\frac s {\sqrt n} \right)^n \ ds \\
& = \lim_{n \to \infty} \Gamma \left(1 + \frac 1 p \right) \frac 2 \pi \int_0^\infty \left(1-c_p \frac {s^2} n \right)^n \ ds \\
& = \Gamma \left(1 + \frac 1 p \right) \frac 2 \pi \int_0^\infty \exp(-c_p s^2) \ ds \\
& = \Gamma \left(1 + \frac 1 p \right) \frac 2 \pi \frac 1 2 \sqrt{\frac\pi {c_p}} = \sqrt{\frac 6 \pi \frac{\Gamma \left(1+\frac 1 p \right)^3}{\Gamma \left(1 + \frac 3 p \right)}} \ .
\end{align*}

ii) If $a_1 > \frac 1 {\sqrt 2}$, the same argument as in iii) of the previous proof shows that $A_{n,p}(a) \le \frac 1 {a_1}$. Hence Theorem \ref{th2} will be valid for all $a \in S^{n-1}$ with $a_1 \ge \sqrt{\frac \pi 6 \frac{\Gamma(1+\frac 3 p)}{\Gamma(1 + \frac 1 p)^3}} =: \phi(p)$. The function $\phi$ is convex in $(2,p_0)$: Calculation yields that $\textup{signum}(\phi''(p)) = \textup{signum}(\psi(p))$, where
\begin{align*}
\psi(p) & = \left[\Psi \left(1+ \frac 3 p \right)- \Psi \left(1+\frac 1 p \right) \right] \left[3 \Psi \left(1+ \frac 3 p \right)- 3 \Psi \left(1+\frac 1 p \right) + 4 p \right] \\
& + \left[6 \Psi' \left(1+ \frac 3 p \right) - 2 \Psi' \left(1+ \frac 1 p \right) \right] \ .
\end{align*}
The first summand is positive, since $\Psi$ is increasing due to the log-convexity of $\Gamma$, and for the second summand \eqref{eq2.4} yields
$$ 6 \Psi' \left(1+ \frac 3 p \right) - 2 \Psi' \left(1+ \frac 1 p \right) = \sum_{k=1}^\infty \frac 4 {p^2} \frac {k^2 - \frac 3 {p^2}}{(k+\frac 3 p)^2(k+\frac 1 p)^2} > 0 \ . $$
We have that $\phi(p) > \frac 1 {\sqrt 2} + \frac 1 {3p} + \frac 1 {150}$ holds for $p=20$ and $p=p_0$. Thus by convexity of $\phi$, $a_1 > \frac 1 {\sqrt 2} + \frac 1 {3p} + \frac 1 {150}$ implies $a_1 > \phi(p)$ for all $20 \le p \le p_0$.    \hfill $\Box$   \\

{\bf Conjecture.}  We conjecture that for $2 < p < p_0$ and $n \ge 2$ the maximal hyperplane section of $B_p^n$ is either given by $(a^{(2)})^\perp$ or by $(a^{(n)})^\perp$,
$$A_{n,p}(a) \le \max ( A_{n,p}(a^{(2)}) , A_{n,p}(a^{(n)}) ) \ , $$
where $a^{(2)}$ only occurs for relatively small $n \le N(p)$. The motivation for the conjecture is that the $p$-analogue $h_p(u)$ of Ball's integral function, which by the above proof is closely related to $A_{n,p}(a^{(n)})$ for $u=n$, seems to decrease first for $u > 2$ close to $2$ and then increase to $\lim_{n \to \infty} h_p(n) > h_p(2)$, at least for $5 \le p < p_0$. The value $N(p)$ should be approximately the smallest value $n$ such that $h_p(n) > h_p(2)$. Numerical evaluation of \eqref{eq2.2} yields e.g.
$$A_{3,6}(a^{(2)}) = 2^{\frac 1 3} \simeq 1.260 > A_{3,6}(a^{(3)}) \simeq 1.250 \ , $$
$$A_{4,8}(a^{(2)}) = 2^{\frac 3 8} \simeq 1.297 > A_{4,8}(a^{(4)}) \simeq 1.295 >  A_{4,8}(a^{(3)}) \simeq 1.270 \ . $$
For $2 < p < 4.3$, $h_p$ seems to increase in the full interval $(2,\infty)$, at least we have by numerical evaluation that
\begin{align*}
h_p'(2) & = \frac 1 {2 \sqrt 2} \int_0^\infty \gamma_p(s)^2 \ ds + \sqrt 2 \int_0^\infty \gamma_p(u)^2 \ \ln|\gamma_p(u)| \ du  \\
& = \frac 1 8 \sqrt{\frac \pi {d_p}} + \sqrt 2 \int_0^\infty \gamma_p(u)^2 \ \ln|\gamma_p(u)| \ du > 0
\end{align*}
e.g. $h_4'(2) > 0.009$, $h_{4.3}'(2) > 0.002$. But $h_{4.5}'(2) < -0.002$. Thus we conjecture that for $2 < p < 4.3$, the main diagonal $a^{(n)}$ should yield the maximal hyperplane section for all dimensions $n$. In this respect, note also that, by Koldobsky \cite{K}, Chapter 7, for $p < 2$ we have $A_{n,p}(a^{(n)}) \le A_{n,p}(a)$ for all $a \in S^{n-1}$, which implies $\frac d {dp} A_{n,p}(a)|_{p=2} \le \frac d {dp} A_{n,p}(a^{(n)})|_{p=2}$, since $A_{n,2}(a)=A_{n,2}(a^{(n)})$. If the inequality were strict, we could conclude that $A_{n,p}(a) \le A_{n,p}(a^{(n)})$ for $p>2$ close to $2$. The conjecture is that this holds up to $p=4.3$ for all $a \in S^{n-1}$. \\

To prove Theorem \ref{th3}, we will use the following ingenious distribution function result of Nazarov, Podkorytov \cite{NP}. Let $(\Omega,\mu)$ be a measure space. The distribution function $F$ of a measurable function $f:\Omega \to \R_{\ge 0}$ is defined by
$$F(x) = \mu \{s \in \Omega \ | \ f(s) > x \} \ ,  \ x>0 \ . $$

\begin{proposition}\label{prop2}
Let $(\Omega,\mu)$ be a measure space, $f, g : \Omega \to \R_{\ge 0}$ be measurable and $F$, $G$ be their distribution functions, assumed to be finite for every $x>0$. Suppose that there is $u_0 \in (0,\infty)$ such that $g^u - f^u \in L_1(\Omega,\mu)$ for all $u \ge u_0$. Assume further that there is $x_0 > 0$ such that the difference $G - F$ changes sign from $-$ to $+$, i.e. $G(x) \le F(x)$ for all $x \in (0,x_0)$ and $G(x) \ge F(x)$ for all $x> x_0$. Then the function
$$h(u) := \frac 1 {u x_0^u} \int_{\Omega} (g^u - f^u ) \ d \mu $$
is increasing in $u \in [u_0,\infty)$. In particular, if $\int_\Omega (g^{u_0} - f^{u_0}) \ d \mu \ge 0$ holds, we also have $\int_\Omega (g^u - f^u) \ d \mu \ge 0$ for all $u \ge u_0$.
\end{proposition}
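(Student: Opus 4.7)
The plan is to convert the claim into a one-variable integral against $G - F$ via the layer cake formula and then exploit the single sign change directly, without differentiating in $u$.

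First, I would apply the pointwise identity $\phi(s)^u = u \int_0^\infty x^{u-1}\one_{\{\phi > x\}}(s)\,dx$ to $\phi = f, g$ and subtract, obtaining
$$g(s)^u - f(s)^u = u \int_0^\infty x^{u-1}\bigl(\one_{\{g > x\}}(s) - \one_{\{f > x\}}(s)\bigr)\,dx,$$
together with the absolute-value analogue
$$|g(s)^u - f(s)^u| = u \int_0^\infty x^{u-1} \bigl|\one_{\{g > x\}}(s) - \one_{\{f > x\}}(s)\bigr|\,dx,$$
valid because $|\one_A - \one_B| = \one_{A\triangle B}$. Since $g^u - f^u \in L_1(\Omega,\mu)$ for $u \ge u_0$, Tonelli applied to the second identity and then Fubini applied to the first produce
$$\int_\Omega (g^u - f^u)\,d\mu = u \int_0^\infty x^{u-1}(G(x) - F(x))\,dx.$$
Substituting $x = x_0 t$ and dividing by $u x_0^u$ yields
$$h(u) = \int_0^\infty t^{u-1}\Delta(t)\,dt, \qquad \Delta(t) := G(x_0 t) - F(x_0 t),$$
with $\Delta \le 0$ on $(0,1)$ and $\Delta \ge 0$ on $(1,\infty)$ by hypothesis.

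Next, I would prove monotonicity by direct comparison rather than differentiation. For $u_0 \le u_1 < u_2$,
$$h(u_2) - h(u_1) = \int_0^\infty (t^{u_2 - 1} - t^{u_1 - 1})\Delta(t)\,dt.$$
On $(0,1)$ both $t^{u_2-1} - t^{u_1-1}$ and $\Delta(t)$ are non-positive, while on $(1,\infty)$ both are non-negative, so the integrand is pointwise $\ge 0$ and $h(u_2) \ge h(u_1)$. The ``in particular'' assertion is then immediate since $\int_\Omega (g^u - f^u)\,d\mu = u x_0^u\, h(u)$ and $u x_0^u > 0$.

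The only technical point I anticipate is integrability bookkeeping, because individually $\int x^{u-1} F(x)\,dx$ and $\int x^{u-1} G(x)\,dx$ may diverge, so the split of $h(u)$ at $t = 1$ needs justification. This is handled by the a priori bound
$$u\int_0^\infty x^{u-1} |G(x) - F(x)|\,dx \le \int_\Omega |g^u - f^u|\,d\mu < \infty,$$
which follows from the pointwise inequality $|G(x) - F(x)| \le \mu(\{\min(f,g) < x < \max(f,g)\})$ combined with Tonelli applied to $ux^{u-1}\one_{\{\min<x<\max\}}$. Consequently both $\int_0^1 t^{u-1}|\Delta|\,dt$ and $\int_1^\infty t^{u-1}\Delta\,dt$ are finite for each $u \ge u_0$, and all manipulations above are legitimate. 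Once this bookkeeping is in place, the sign-comparison step is entirely elementary and the proposition follows.
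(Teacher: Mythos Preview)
The paper does not actually prove Proposition~\ref{prop2}; it is stated as the ``ingenious distribution function result of Nazarov, Podkorytov \cite{NP}'' and then applied. Your argument is correct and is essentially the standard Nazarov--Podkorytov proof: layer-cake to rewrite $h(u)$ as $\int_0^\infty t^{u-1}\Delta(t)\,dt$, then use the single sign change of $\Delta$ at $t=1$ to see that the integrand is pointwise monotone in $u$. Your integrability bookkeeping via $|G(x)-F(x)|\le \mu(\{\min(f,g)\le x<\max(f,g)\})$ and Tonelli is the right way to justify Fubini and the splitting at $t=1$, and it uses exactly the hypotheses provided (finiteness of $F,G$ and $g^u-f^u\in L_1$).
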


We will apply this to $f = |\gamma_p|$ and $g(s) = \exp(-d_p s^2)$ and $u_0 = 2$ to prove Theorem \ref{th3} (a). Note that
$$\sqrt u \int_0^\infty \exp(-d_p s^2)^u \ ds = \frac 1 2 \sqrt{\frac \pi {d_p}}$$
is independent of $u$ and we get a bound of $h_p(u) = \sqrt u \int_0^\infty |\gamma_p(s)|^u \ ds$ for all $u \ge u_0$. \\

While the distribution function of $G$ is easily determined, $G(x) = \sqrt{\frac 1 {d_p} \ln (\frac 1 x)}$, the estimate of the distribution function of $|\gamma_p|$ is more complicated and requires some information on the decay of $\gamma_p(s)$, which we will investigate in the next section. \\

\section{Decay of $\gamma_p$}

To estimate the distribution function of $|\gamma_p|$, we need some results on the decay of $\gamma_p(s)$, for $s$ close to zero, asymptotically in $s$, and in the medium range for $s$. We start with the estimate close to zero.

\begin{proposition}\label{prop3}
Let $5 \le p < \infty$, $c_p := \frac 1 6 \frac{\Gamma \left(1+ \frac 3 p \right)}{\Gamma \left(1+ \frac 1 p \right)}$. Then for all $0 \le s \le 3$
$$\gamma_p(s) \le \exp(-c_p s^2) \ , \ \gamma_p(0) = 1 \ . $$
\end{proposition}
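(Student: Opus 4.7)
The natural strategy is to bound $\cos$ pointwise above by a truncated Taylor polynomial, integrate against the density $e^{-r^p}/\Gamma(1+\tfrac{1}{p})$ defining $\gamma_p$, and then compare the resulting polynomial in $s$ against $e^{-c_p s^2}$. The elementary inequality $\cos y \le 1 - y^2/2 + y^4/24$ holds for every $y \ge 0$ (on $[0, \sqrt{30}]$ by truncating the alternating cosine series after the $+y^4/24$ term, and beyond $\sqrt{30}$ because the right-hand side already exceeds $1$). Substituting $y = sr$ and integrating gives
\[
\gamma_p(s) \le 1 - c_p s^2 + \frac{m_4}{24} s^4, \qquad m_k := \frac{\Gamma((k+1)/p)}{p\, \Gamma(1+1/p)},
\]
so Proposition \ref{prop3} reduces to the one-variable inequality $1 - x + \alpha x^2 \le e^{-x}$ on $[0, 9 c_p]$, where $x = c_p s^2$ and $\alpha := m_4/(6 m_2^2) = R(p)/6$, with $R(p) := \Gamma(5/p)\Gamma(1/p)/\Gamma(3/p)^2$.

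I would then show that $R(p)$ strictly decreases from $R(2) = 3$ to $R(\infty) = 9/5$ on $[2, \infty)$ by a Digamma computation analogous to Lemma \ref{lem1}, using \eqref{eq2.4}. Consequently $\alpha \le 1/2$ for all $p \ge 2$, with equality at $p = 2$ (matching $\gamma_2(s) = e^{-c_2 s^2}$). The target inequality rearranges to $(1/2 - \alpha)\, x^2 \ge h(x) := 1 - x + x^2/2 - e^{-x}$; since $h \ge 0$ and $h(x)/x^2$ is monotonically increasing on $(0,\infty)$ with limit $1/2$, it suffices to verify the bound at the right endpoint $x = 9 c_p$, and monotonicity of $c_p$ in $p$ (Lemma \ref{lem2}) further reduces the check to a finite worst-case range of $p$.

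The main obstacle is that this two-term bound on $\cos$ barely fails in a narrow regime near $(p, s) = (5, 3)$: numerically the two-term polynomial bound is $\simeq 0.275$ while the target $e^{-c_5 \cdot 9} \simeq 0.232$. To close this gap I would sharpen to the four-term truncation
\[
\cos y \le 1 - \frac{y^2}{2} + \frac{y^4}{24} - \frac{y^6}{720} + \frac{y^8}{40320},
\]
which is again globally valid for $y \ge 0$ (by the same alternating-series argument on $[0, \sqrt{110}]$, with the trivial bound $\cos y \le 1$ handling the tail). Integrating yields $\gamma_p(s) \le \sum_{k=0}^4 (-1)^k m_{2k}\, s^{2k}/(2k)!$, which at $(p,s) = (5,3)$ gives roughly $0.11$ against the target $0.23$ — ample slack. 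The refined polynomial-exponential inequality on $[0, 3] \times [5, p^\ast]$ (for the transition threshold $p^\ast$ at which the two-term bound becomes sufficient) then follows by elementary calculus combined with monotonicity in $p$ of the higher moment ratios $m_{2k}/c_p^k$, reducing the verification to a small number of pointwise evaluations. The most delicate bookkeeping is this case split and the Digamma-based monotonicity of the moment ratios, but each step is elementary.
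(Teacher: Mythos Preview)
Your approach is different from the paper's and essentially correct. The paper expands both $\gamma_p(s)$ and $e^{-c_ps^2}$ as full power series in $s^2$, then shows that for every even $n\ge2$ the paired difference $(b_p(n)-b_p(n+1)s^2)-(a_p(n)-a_p(n+1)s^2)$ is nonnegative on $[0,3]$; this is reduced via Digamma computations to monotonicity of the ratio $C_p(n)=b_p(n)/a_p(n)$ in $n$ and of a companion ratio $D_p(n)$ in the opposite direction, so only $n=2$ needs checking, and that case is handled by an iterative comparison in $p$. Your route---truncate $\cos$ above by a Taylor polynomial, integrate to get a polynomial bound on $\gamma_p$, then compare to $e^{-c_ps^2}$ as a one-variable inequality in $x=c_ps^2$---is more direct and avoids the term-by-term series comparison, at the cost of the two-term/four-term case split. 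The observation that $h(x)/x^2$ is increasing is clean and genuinely reduces the two-term case to checking the endpoint $x=9c_p$; your numerics confirming failure of the two-term bound near $p=5$ and the comfortable margin of the four-term bound there are accurate.

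One point deserves more care. In the four-term regime $5\le p\le p^\ast$ you assert that ``monotonicity in $p$ of the higher moment ratios $m_{2k}/c_p^k$'' reduces the verification to finitely many evaluations, but you have not established such monotonicity for $k=3,4$, nor explained how it would effect the reduction (the polynomial $1-x+\alpha_2x^2-\alpha_3x^3+\alpha_4x^4$ is not obviously monotone in $p$ for fixed $x$, since the signs alternate). Also, ``monotonicity of $c_p$'' as invoked for the endpoint reduction is not literally available on $[5,\infty)$: by Lemma~\ref{lem2}, $c_p$ decreases on $[5,p_2]$ and increases afterwards. None of this is fatal---since $[5,p^\ast]\times[0,3]$ is compact and the deficit $e^{-c_ps^2}-P_4(s,p)$ is continuous with ample slack at the corner, a grid check with Lipschitz bounds (or a separate Digamma argument for each ratio) would close the gap---but the monotonicity claims as stated need either proof or replacement by such an argument.
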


\begin{proof}
i) Using the series expansion of $\cos(s r)$, we find
\begin{align*}
\gamma_p(s) & = \frac 1 {\Gamma \left(1+ \frac 1 p \right)} \int_0^\infty \cos(s r) \ \exp(-r^p) \ dr \\
& = \frac 1 {\Gamma \left(1+ \frac 1 p \right)} \sum_{n=0}^\infty (-1)^n \frac{s^{2n}}{(2n)!} \int_0^\infty r^{2n} \ \exp(-r^p) \ dr \\
& = \sum_{n=0}^\infty (-1)^n \frac 1 {(2n+1)!} \frac{\Gamma \left(1+ \frac{2n+1} p \right)}{\Gamma \left(1+ \frac 1 p \right)} s^{2n} =: \sum_{n=0}^\infty (-1)^n a_p(n) s^{2n} \ .
\end{align*}
The series expansion of $\exp(-c_p s^2)$ is
$$\exp(-c_p s^2) = \sum_{n=0}^\infty (-1)^n \left(\frac 1 6 \frac{\Gamma \left(1+ \frac 3 p \right)}{\Gamma \left(1+ \frac 1 p \right)} \right)^n \frac{s^{2n}}{n!} =: \sum_{n=0}^\infty (-1)^n b_p(n) s^{2n} \ . $$
The first two terms in both series coincide. Therefore $\gamma_p(s) \le \exp(-c_p s^2)$ will be satisfied for $0< s \le 3$, if for all even integers $n \ge 2$
\begin{equation}\label{eq3.1}
b_p(n) - b_p(n+1) s^2 \ge a_p(n) - a_p(n+1) s^2
\end{equation}
holds. We have for $0 < s \le 3$
$$b_p(n)-b_p(n+1) s^2 = b_p(n) \left(1 - \frac{s^2}{6(n+1)} \frac{\Gamma \left(1+ \frac 3 p \right)}{\Gamma \left(1+ \frac 1 p \right)} \right) > 0 \ ,  $$
$$a_p(n)-a_p(n+1) s^2 = a_p(n) \left(1 - \frac{s^2}{2(n+1)(2n+3)} \frac{\Gamma \left(1+\frac{2n+3} p \right)}{\Gamma \left(1+ \frac{2n+1} p \right)} \right) > 0 \ . $$
Hence \eqref{eq3.1} is equivalent to
\begin{equation}\label{eq3.2}
\frac{b_p(n)}{a_p(n)} :=C_p(n) \ge D_p(n) := \frac{1 - \frac{s^2}{2(n+1)(2n+3)} \frac{\Gamma \left(1+\frac{2n+3} p \right)}{\Gamma \left(1+ \frac{2n+1} p \right)}}{1 - \frac{s^2}{6(n+1)} \frac{\Gamma \left(1+ \frac 3 p \right)}{\Gamma \left(1+ \frac 1 p \right)}} \ .
\end{equation}
For $n = 0$ both values are $1$. \\

ii) We claim that $C_p(n)$ is increasing in $n \ge 2$ for all $p$. Differentiation shows
$$\frac d {dn} (\ln (C_p(n))) = 2 \Psi(2n+2) - \Psi(n+1) + \ln \left(\frac 1 6 \frac{\Gamma \left(1+ \frac 3 p \right)}{\Gamma \left(1+ \frac 1 p \right)} \right) - \frac 2 p \Psi \left(1+ \frac{2n+1} p \right) \ . $$
Let $E(n,p) := \ln (\frac 1 6 \frac{\Gamma(1+ \frac 3 p)}{\Gamma(1+ \frac 1 p)}) - \frac 2 p \Psi(1+ \frac{2n+1} p)$. We will show that $E(n,p)$ is increasing in $p$ for all $n$ (and negative), and then $E(n,p) \ge E(n,2)$ will imply
$$\frac d {dn} (\ln (C_p(n))) \ge 2 \Psi(2n+2) - \Psi(n+1) - 2 \ln 2 - \Psi \left(n+ \frac 3 2 \right) = 0 \ , $$
where the last equality is a consequence of the duplication formula for the Digamma function $\Psi$,
$\Psi(2x) = \frac 1 2 \Psi(x) + \frac 1 2 \Psi(x+\frac 1 2) + \ln 2$, see Abramowitz, Stegum \cite{AS}, 6.3.8. Thus $C_p(n)$ will be increasing in $n$. As for $E(n,p)$, we have
\begin{align*}
p^2 \ & \frac d {dp} E(n,p) \\
& =  2 \Psi \left(1+\frac{2n+1} p \right) + \Psi \left(1+ \frac 1 p \right) - 3 \Psi \left(1+ \frac 3 p \right)  + 2 \frac{2n+1} p \Psi' \left(1+\frac{2n+1} p \right) \\
& \ge   2 \Psi \left(1+ \frac 5 p \right) + \Psi \left(1+ \frac 1 p \right) - 3 \Psi \left(1+ \frac 3 p \right)  \ ,
\end{align*}
using that $\Psi$ is increasing, $\Psi' > 0$, since $\Gamma$ is log-convex. Calculation using \eqref{eq2.4} shows that
$$2 \Psi \left(1+ \frac 5 p \right) + \Psi \left(1+ \frac 1 p \right) - 3 \Psi \left(1+ \frac 3 p \right) = \frac 2 p \sum_{k=1}^\infty \frac{k - \frac 3 p}{(k+\frac 5 p)(k+\frac 1 p)(k+\frac 3 p)} > 0 $$
for $p \ge 3$. In fact, the sum of the first four terms (and thus the full sum) is positive for all $p \ge 2$. Thus $E(n,p)$ is increasing in $p$ for all $n \ge 2$. \\

We next show that $D_p(n,s)$ is decreasing in $n \ge 2$ for all $p\ge 5$ and $s \ge 0$,
$$D_p(n) := \frac{1 - \frac{s^2}{2(n+1)(2n+3)} \frac{\Gamma \left(1+\frac{2n+3} p \right)}{\Gamma \left(1+ \frac{2n+1} p \right)}}{1 - \frac{s^2}{6(n+1)} \frac{\Gamma \left(1+ \frac 3 p \right)}{\Gamma \left(1+ \frac 1 p \right)}} =: \frac{1-B_p(n) s^2}{1-A_p(n) s^2} \ . $$
This is equivalent to
\begin{equation}\label{eq3.3}
(A_p(n)-A_p(n+1))-(B_p(n)-B_p(n+1)) + s^2 (A_p(n+1)B_p(n)-A_p(n)B_p(n+1)) \ge 0 \ .
\end{equation}
The coefficient of $s^2$ is non-negative if and only if
\begin{equation}\label{eq3.4}
(2n+5) \Gamma \left(1+\frac{2n+3} p \right)^2 \ge (2n+3) \Gamma \left(1+\frac{2n+5} p \right) \Gamma \left(1+\frac{2n+1} p \right) \ .
\end{equation}
We have equality for $p=2$ and claim that
$l(p):= 2 \ln (\Gamma(1+\frac{2n+3} p)) - \ln (\Gamma(1+\frac{2n+5} p)) - \ln (\Gamma(1+\frac{2n+1} p))$ is increasing in $p \ge 2$, so that \eqref{eq3.4} will hold. Differentiation shows, using again \eqref{eq2.4},
\begin{align*}
p^2 & \ l'(p) \\
& =   (2n+5) \Psi \left(1+\frac{2n+5} p \right) + (2n+1) \Psi \left(1+\frac{2n+1} p \right) - 2 (2n+3) \Psi \left(1+\frac{2n+3} p \right)  \\
& = \frac 8 p \sum_{k=1}^\infty \frac k {(k+\frac{2n+5} p)(k+\frac{2n+5} p)(k+\frac{2n+5} p)} > 0 \ .
\end{align*}
Therefore the coefficient of $s^2$ in \eqref{eq3.3} is positive. \\
By the log-convexity of $\Gamma$, $\Gamma(1+\frac{2n+3} p)^2 \le \Gamma(1+\frac{2n+1} p) \Gamma(1+\frac{2n+5} p)$. This yields for the first two terms in \eqref{eq3.3}
$$A_p(n)-A_p(n+1) = \frac 1 {6(n+1)(n+2)} \frac{\Gamma \left(1+\frac 3 p \right)}{\Gamma \left(1+\frac 1 p \right)} \ , $$
$$B_p(n)-B_p(n+1) \le \left(\frac 1 {2(n+1)(2n+3)} - \frac 1 {2(n+2)(2n+5)} \right) \frac{\Gamma \left(1+\frac{2n+3} p \right)}{\Gamma \left(1+\frac{2n+1} p \right)} \ . $$
Again by the log-convexity of $\Gamma$, for $x = 1 + \frac{2n+1} p$ and $0 < \theta = \frac 2 p \le 1$
$$\Gamma(x+ \theta) \le \Gamma(x)^{1-\theta} \Gamma(x+1)^\theta = x^\theta \Gamma(x) \ , \ \frac{\Gamma \left(1+\frac{2n+3} p \right)}{\Gamma \left(1+\frac{2n+1} p \right)} \le \left(1+ \frac{2n+1} p \right)^{\frac 2 p} \ . $$
Therefore
\begin{align*}
(A_p(n) - & A_p(n+1)) - (B_p(n) - B_p(n+1)) \\
& \ge \frac 1 {6(n+1)(n+2)} \left(\frac{\Gamma \left(1+\frac 3 p \right)}{\Gamma \left(1+\frac 1 p \right)} - \frac{12 n + 21}{(2n+3)(2n+5)} \left(1+\frac{2n+1} p \right)^{\frac 2 p} \right) \ .
\end{align*}
By Lemma \ref{lem2} (b), $\frac{\Gamma(1+\frac 3 p)}{\Gamma(1+\frac 1 p)} \ge 0.9429$ for all $p \ge 2$. The negative term in the last inequality is maximal for $n=2$, and it is $< 0.9426$ for all $p \ge 5$. Thus \eqref{eq3.3} is satisfied for all $p \ge 5$ and $s \ge 0$. \\

iii) Since $C_p(n)$ is increasing in $n$ and $D_p(n,s)$ is decreasing in $n$ for $p \ge 5$ and $s \ge 0$, the worst case for \eqref{eq3.2} to hold is $n=2$. We have to prove that $C_p(2) \ge D_p(2,s)$ for all $0 \le s \le 3$. Since
\begin{align*}
p^2 \ \frac d {dp} \ln(C_p(2)) & =  5 \Psi \left(1+\frac 5 p \right) + \Psi \left(1+\frac 1 p \right) - 6 \Psi \left(1+\frac 3 p \right)  \\
& = \frac 8 p \sum_{k=1}^\infty \frac k {(k+\frac 5 p)(k+\frac 1 p)(k+\frac 3p)} > 0 \ ,
\end{align*}
$C_p(2)$ is strictly increasing in $p$ with $\lim_{p \to \infty} C_p(2) = \frac 5 3$. Also
$D_p(2,s) = \frac{1-\frac{s^2}{42} \frac{\Gamma(1+\frac 7 p)}{\Gamma(1+\frac 5 p)}}{1-\frac{s^2}{18} \frac{\Gamma(1+\frac 3 p)}{\Gamma(1+\frac 1 p)}}$
is strictly increasing in $p$ for all $p \ge 5$ and $0 \le s \le 3$: $1 \le \frac{\Gamma(1+\frac 7 p)}{\Gamma(1+\frac 5 p)} \le \frac 7 2$ is strictly decreasing and $\frac{\Gamma(1+\frac 3 p)}{\Gamma(1+\frac 1 p)}$ is strictly increasing for $p \ge p_2 \simeq 9.1147$. For $p \in [5,p_2)$, $D_p(2,s)$ is increasing in $p$, since $\frac{\Gamma(1+\frac 7 p)}{\Gamma(1+\frac 5 p)}$ decreases relatively faster than $\frac{\Gamma(1+\frac 3 p)}{\Gamma(1+\frac 1 p)}$. Namely, we have for $5 \le p \le p_2$
$$\frac{\frac d {dp} \frac{\Gamma \left(1+\frac 3 p \right)}{\Gamma \left(1+\frac 1 p \right)}}{\frac d {dp} \frac{\Gamma \left(1+\frac 7 p \right)}{\Gamma \left(1+\frac 5 p \right)}} < \frac 1 4 < 0.29 < \frac 3 7 \frac{1-\frac{s^2}{18} \frac{\Gamma \left(1+\frac 3 p \right)}{\Gamma \left(1+\frac 1 p \right)}}{1-\frac{s^2}{42} \frac{\Gamma \left(1+\frac 7 p \right)}{\Gamma \left(1+\frac 5 p \right)}} \ , $$
where the maximum on the left side occurs for $p=5$ and the minimum on the right side for $s=3$ and $p=p_2$.
Further, $D_p(2,s)$ is increasing in $s$ for all $p$. \\

Define a decreasing sequence $2 \le p_n \le \infty$ as follows: Let $p_1 := \infty$. Since $\frac{11} 7 = D_{p_1}(2,3) < C_{p_1}(2) = \frac 5 3$ and $C_p(2)$ and $D_p(2,3)$ are strictly increasing in $p$ with $C_2(2) = D_2(2,3) = 1$, there is a unique $p_2$, $2 < p_2 < p_1 = \infty$ such that $D_{p_1}(2,3) = C_{p_2}(2)$. Then for all $p_2 \le p < p_1$,
$C_p(2) \ge C_{p_2}(2) = D_{p_1}(2,3) \ge D_p(2,3)$. If $(p_{n-1},p_n)_{n \ge 2}$ have been chosen with $2< p_n < p_{n-1}$ and $C_{p_n}(2) = D_{p_{n-1}}(2,3)$ such that $C_p(2) \ge D_p(2,3)$ for all $p_n \le p < p_{n-1}$, choose $2 \le p_{n+1} < p_n$ with $C_{p_{n+1}}(2) = D_{p_n}(2,3)$. Note that $p_{n+1} > 2$ since $C_2(2) = D_2(2,s)=1$. Then for all $p_{n+1} \le p < p_n$, $C_p(2) \ge C_{p_{n+1}}(2) = D_{p_n}(2,3) \ge D_p(2,3)$. Numerical evaluation yields $p_2 \simeq 8.6$, $p_3 \simeq 5.31$ and $p_4 \simeq 4.76$. Hence for all $p \ge 5$ and $0 \le s \le 3$, $C_p(2) \ge D_p(2,3) \ge D_p(2,s)$, i.e. \eqref{eq3.2} holds. This proves Proposition \ref{prop3}.
\end{proof}

{\bf Remarks.} (1) The estimate is also true for $2 < p \le 5$, though more tedious to prove by the method used. \\
(2) In K\"onig, Koldobsky \cite{KK}, Lemma 4.1, a similar inequality was proven in a different context, where information on the {\it real} zeros of the Fourier transform of a specific function was available. Proposition \ref{prop3} might be shown (easier than here) by a modification of that technique, provided one would know that the (real and complex) zeros $(z_n)_{n \in \N}$ of $\gamma_p$ satisfy $\Re(z_n) \ge 3 + \Im(z_n)$ for all $n$. We have not been able to prove this. \\

\begin{lemma}\label{lem3}
Let $2 \le p < \infty$. Then for any $s > 0$
\begin{equation}\label{eq3.5}
\left| \frac{\sin(s)} s - \Gamma \left(1+ \frac 1 p \right) \gamma_p(s) \right| \le \frac {1.016} p \ ,
\end{equation}
hence
$$\left| \gamma_p(s) \right| \ge \frac 1 {\Gamma \left(1+ \frac 1 p \right)} \left( \left| \frac{\sin(s)} s \right| - \frac{1.016} p \right) \ . $$
\end{lemma}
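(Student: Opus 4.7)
The plan is to reduce to an $s$-independent bound via the integral representations $\sin(s)/s = \int_0^1 \cos(sr)\,dr$ and $\Gamma(1+\tfrac{1}{p})\gamma_p(s) = \int_0^\infty \cos(sr)\,e^{-r^p}\,dr$. Subtracting, splitting the integration at $r=1$, and using $|\cos|\le 1$ gives
\begin{equation*}
\left|\frac{\sin s}{s} - \Gamma \!\left(1+\tfrac{1}{p}\right)\gamma_p(s)\right| \le E(p) := \int_0^1 (1-e^{-r^p})\,dr + \int_1^\infty e^{-r^p}\,dr,
\end{equation*}
so it suffices to show $pE(p) \le 1.016$ for all $p\ge 2$.

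First I would substitute $v=r^p$, obtaining $pE(p) = J(1/p)$ with
\begin{equation*}
J(\alpha):=\int_0^1 \frac{1-e^{-v}}{v}\,v^{\alpha}\,dv + \int_1^\infty \frac{e^{-v}}{v}\,v^{\alpha}\,dv, \qquad \alpha\in[0,1/2].
\end{equation*}
Euler's classical identity $\gamma = \int_0^1 \frac{1-e^{-v}}{v}\,dv - E_1(1)$, with $E_1(1) = \int_1^\infty \frac{e^{-v}}{v}\,dv \approx 0.21938$, identifies the sharp limiting value $J(0)=\gamma + 2E_1(1) \approx 1.01598 < 1.016$.

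It remains to show $J(\alpha) \le J(0)$ for $\alpha \in (0, 1/2]$. Writing out $J(\alpha)-J(0)$ and applying the change of variable $v\mapsto 1/v$ to the piece over $(1,\infty)$ folds both integrals onto $(0,1)$; after collecting terms and dividing by the positive factor $(1-v^\alpha)/v$, the integrand is non-positive exactly when
\begin{equation*}
(1-e^{-v})\,v^\alpha \ge e^{-1/v}, \qquad v\in(0,1].
\end{equation*}

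The hard step is this pointwise inequality. For $v<1$ the left-hand side is decreasing in $\alpha\in(0,1/2]$, so it suffices to treat the worst case $\alpha=1/2$. Using $e^v \ge 1+v$, equivalently $1-e^{-v} \ge v e^{-v}$, reduces the claim to $v^{3/2}e^{-v}\ge e^{-1/v}$ on $(0,1]$, i.e.\ to $g(v):=\tfrac{3}{2}\ln v - v + \tfrac{1}{v}\ge 0$. Since $g(1)=0$ and $g'(v)=\tfrac{3}{2v}-1-\tfrac{1}{v^2}$ is strictly negative on $(0,\infty)$ (the quadratic $v^2 - \tfrac{3}{2}v+1$ has discriminant $-\tfrac{7}{4}$), $g$ is strictly decreasing and hence positive on $(0,1)$. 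Combining, $pE(p) = J(1/p) \le J(0) < 1.016$, which together with the triangle inequality above proves the lemma.
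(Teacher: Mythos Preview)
Your proof is correct, and the overall architecture matches the paper's: both start from the same $s$-free bound
\[
\left|\frac{\sin s}{s}-\Gamma(1+\tfrac1p)\gamma_p(s)\right|\le E(p)=\int_0^1(1-e^{-r^p})\,dr+\int_1^\infty e^{-r^p}\,dr,
\]
and both identify the sharp limiting value $\lim_{p\to\infty}pE(p)=\gamma+2E_1(1)<1.016$.

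The genuine difference is in how you establish $pE(p)\le \gamma+2E_1(1)$. The paper rewrites $E(p)=1-\Gamma(1+\tfrac1p)+\tfrac2p\int_1^\infty t^{1/p-1}e^{-t}\,dt$, sets $\psi(p)=pE(p)$, differentiates, and shows $\psi'(p)>0$ using bounds on the digamma function $\Psi(1+\tfrac1p)$ together with explicit polynomial upper bounds for $\Gamma(1+\tfrac1p)$; the monotonicity then gives $\psi(p)\le\psi(\infty)$. You instead substitute $v=r^p$ to write $pE(p)=J(1/p)$, fold the $(1,\infty)$-piece of $J(\alpha)-J(0)$ onto $(0,1)$ by $v\mapsto 1/v$, and reduce everything to the pointwise inequality $(1-e^{-v})v^{1/2}\ge e^{-1/v}$ on $(0,1]$, which you dispatch with a one-line calculus argument on $g(v)=\tfrac32\ln v-v+1/v$. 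Your route is more elementary and self-contained --- no special-function estimates are needed --- while the paper's differentiation argument gives the slightly stronger statement that $pE(p)$ is monotone in $p$, a fact it reuses verbatim in the proofs of Lemmas~\ref{lem4} and~\ref{lem5}.
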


\begin{proof}
We have
\begin{align*}
\Big| \frac{\sin(s)} s & - \Gamma \left(1+\frac 1 p \right) \gamma_p(s) \Big| \\
& = \Big| \int_0^1 \cos(s r) \ (1- \exp(-r^p)) \ dr - \int_1^\infty \cos(s r) \ \exp(-r^p) \ dr \Big| \\
& \le \int_0^1 (1-\exp(-r^p)) \ dr + \int_1^\infty \exp(-r^p) \ dr \\
& = 1- \int_0^\infty \exp(-r^p) \ dr + 2 \int_1^\infty \exp(-r^p) \ dr \\
& = 1 - \Gamma \left(1+\frac 1 p \right) + \frac 2 p \int_1^\infty t^{\frac 1 p -1} \ \exp(-t) \ dt =: \phi(p) \ .
\end{align*}
We claim that $\psi(p) := p \phi(p)$ is increasing in $p$. We find
$$\psi'(p) = 1 - \Gamma \left(1+\frac 1 p \right) + \Gamma \left(1+\frac 1 p \right) \frac 1 p \Psi \left(1+ \frac 1 p \right) - \frac 2 {p^2} \int_1^\infty t^{\frac 1 p -1} \ \ln(t) \ \exp(-t) \ dt \ . $$
It follows from \eqref{eq2.4} that $\Psi(1+\frac 1 p) \ge - \gamma + 4(1- \ln 2) \frac 1 p > - \gamma + \frac 6 5 \frac 1 p$ for all $p \ge 2$. Further
$2 \int_1^\infty t^{-\frac 1 2} \ \ln(t) \ \exp(-t) \ dt < \frac 3 {10}$. This implies
\begin{equation}\label{eq3.6}
\psi'(p) \ge 1 - \Gamma \left(1+\frac 1 p \right) \left(1+ \frac{\gamma} p - \frac 6 5 \frac 1 {p^2} \right) - \frac 3 {10} \frac 1 {p^2} \ .
\end{equation}
For all $p \ge 2$, $\Gamma(1+\frac 1 p) \le 1 - \frac {\gamma} p + \frac 1 {p^2}$, whereas for $2 \le p \le 4$ the better estimate
$\Gamma(1+\frac 1 p) \le 1 - \frac {\gamma} p + \frac 5 6 \frac 1 {p^2}$ holds. Using these estimates in \eqref{eq3.6}, calculation shows that $\psi'(p) > 0$ for all $p \ge 2$. Therefore $\psi(p) = p \phi(p) \le \lim_{p \to \infty} \psi(p) = \gamma + 2 \int_1^\infty t^{-1} \ \exp(-t) \ dt = \gamma + 2 Ei(1,1) < 1.016$ and \eqref{eq3.5} follows. Here $Ei$ denotes the exponential integral function.
\end{proof}

For $15 \le p < \infty$, we need a better, slightly more complicated estimate for $\gamma_p(s)$ in the medium range for $s>0$.

\begin{lemma}\label{lem4}
For $15 \le p < \infty$ and $s>0$ let
$$\Phi_p(s) := \left(\frac {14}{19} \frac{\sin(s)} s + \frac 5 {19} \frac{\sin((1-\frac 2 p)s)} s \right) \frac{\sin(\frac s p)}{\frac s p} \ . $$
Then for all $s > 0$
$$\left| \Phi_p(s) - \Gamma \left(1+\frac 1 p \right) \gamma_p(s) \right| \le \frac 1 {N p} \ , $$
where $N = 7.857$ for $175 < p < \infty$, $N = 8.003$ for $26 \le p \le 175$ and $N = 8.62$ for $15 \le p \le 26$. Hence
$$\left| \gamma_p(s) \right| \ge \frac 1 {\Gamma \left(1+\frac 1 p \right)} \Big( \left| \Phi_p(s) \right| - \frac 1 {N p} \Big) \ . $$
The function $\Phi_p$ may be also written as
$$\Phi_p(s) = \frac 1 {19} \sqrt{221+140 \ \cos(2 \frac s p)} \ \frac{\sin(s - \alpha_p(s))} s \ \frac{\sin(\frac s p)}{\frac s p} \ , $$
where $\alpha_p(s) = \arctan \left( \frac{5 \sin(2 \frac s p)}{14 + 5 \cos(2 \frac s p)} \right)$.
\end{lemma}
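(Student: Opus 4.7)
\textbf{Proof proposal for Lemma \ref{lem4}.}

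The plan is to recognize $\Phi_p(s)$ as the Fourier cosine transform of an explicit piecewise linear spline $\rho_p$ on $[0,\infty)$ that closely approximates $\exp(-r^p)$, and then bound the difference from $\Gamma(1+\frac{1}{p})\gamma_p(s)$ by the $L^1$-distance between $\rho_p$ and $\exp(-r^p)$, uniformly in $s$. First I would identify $\rho_p$ via convolution. Since
\[
\frac{14}{19}\frac{\sin(s)}{s}+\frac{5}{19}\frac{\sin((1-\frac{2}{p})s)}{s} = \int_0^\infty \cos(sr)\,g_p(r)\,dr,\quad g_p := \mathbf{1}_{[0,\,1-\frac{2}{p}]} + \tfrac{14}{19}\,\mathbf{1}_{[1-\frac{2}{p},\,1]},
\]
and $\frac{\sin(s/p)}{s/p} = \int_0^\infty \cos(sr)\,p\,\mathbf{1}_{[0,1/p]}(r)\,dr$, the product $\Phi_p$ is the cosine transform of $\rho_p := \tilde g_p * k_p$ (restricted to $[0,\infty)$), where $\tilde g_p$ is the even extension of $g_p$ and $k_p := \frac{p}{2}\mathbf{1}_{[-1/p,1/p]}$. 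A direct computation gives $\rho_p(r)=1$ on $[0,1-\frac{3}{p}]$, $\rho_p(r) = \frac{5p(1-r)+23}{38}$ on $[1-\frac{3}{p},1-\frac{1}{p}]$, $\rho_p(r) = \frac{7p}{19}(1+\frac{1}{p}-r)$ on $[1-\frac{1}{p},1+\frac{1}{p}]$, and $\rho_p=0$ thereafter. In particular $\rho_p(1)=\frac{7}{19}\simeq e^{-1}$ and $\rho_p(1-\frac{2}{p})=\frac{33}{38}\simeq\exp(-e^{-2})$, so $\rho_p$ matches $\exp(-r^p)$ to high precision at the crucial transition points.

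Using $\Gamma(1+\frac{1}{p})\gamma_p(s) = \int_0^\infty\cos(sr)\exp(-r^p)\,dr$ and $|\cos(sr)|\le1$, I then get the uniform bound
\[
\bigl|\Phi_p(s)-\Gamma(1+\tfrac{1}{p})\gamma_p(s)\bigr| \le \int_0^\infty\bigl|\rho_p(r)-\exp(-r^p)\bigr|\,dr.
\]
I would split this into the four natural regions separated by the break points of $\rho_p$. On $[0,1-\frac{3}{p}]$ the inequality $1-e^{-x}\le x$ gives a contribution at most $\frac{(1-3/p)^{p+1}}{p+1}$, of leading order $e^{-3}/p$. On $[1+\frac{1}{p},\infty)$ the substitution $t=r^p$ yields an upper bound of leading order $\frac{\exp(-(1+1/p)^p)}{p}\sim e^{-e}/p$. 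In the two transition intervals, each of length $2/p$, I would use monotonicity of both $\rho_p$ and $\exp(-r^p)$ together with sharp Taylor expansions of $\exp(-r^p)$ anchored at $r=1-\frac{2}{p}$ and $r=1$ (where the two functions agree to near second order), and locate the sign change of $\rho_p-\exp(-r^p)$ in each interval to extract cancellation of order $1/p^2$.

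Finally, the alternative trigonometric form follows from
\[
14\sin(s) + 5\sin(s-\tfrac{2s}{p}) = [14+5\cos(\tfrac{2s}{p})]\sin(s) - 5\sin(\tfrac{2s}{p})\cos(s) = R\sin(s-\alpha_p(s)),
\]
with $R^2 = [14+5\cos(\tfrac{2s}{p})]^2 + 25\sin^2(\tfrac{2s}{p}) = 221+140\cos(\tfrac{2s}{p})$ and $\tan\alpha_p(s) = \frac{5\sin(2s/p)}{14+5\cos(2s/p)}$.

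The main obstacle I expect is obtaining the sharp numerical constants $N=7.857,\,8.003,\,8.62$. In the asymptotic regime $p\to\infty$ the four-region analysis yields the limiting constant $1/N_\infty$ as the sum of four explicit limits ($(1-3/p)^{p+1}/(p+1)\to e^{-3}/p$, $e^{-(1+1/p)^p}/p\to e^{-e}/p$, and two transition contributions computable from the linear profile of $\rho_p$), and to match $N=7.857$ for $p\ge175$ these cannot be overestimated even at leading order—one needs the exact transition-interval contributions, not just upper bounds. For the smaller ranges $26\le p\le175$ and $15\le p\le 26$, the looser constants reflect additional $O(1/p^2)$ error terms coming from the curvature of $\exp(-r^p)$ that are no longer negligible and must be tracked explicitly, with the transition into each regime requiring a separate verification at the endpoint $p=175$ and $p=26$.
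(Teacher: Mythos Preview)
Your overall strategy matches the paper's: the spline $\rho_p$ is exactly the paper's piecewise linear $k_p$, the bound goes via the $L^1$-distance $\int_0^\infty|\rho_p-\exp(-r^p)|\,dr$, and the alternative trigonometric form is derived by the same identity. Your convolution construction of $\rho_p$ from the step function $g_p$ and the box kernel is in fact a cleaner route to the identity $\Phi_p(s)=\int_0^\infty\cos(sr)\rho_p(r)\,dr$ than the paper's ``explicit integration yields''.

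Where you diverge is precisely at the step you flag, and your plan for the transition intervals contains a misconception: their contribution to $p\int_0^\infty|\rho_p-\exp(-r^p)|\,dr$ is not $O(1/p)$. The pointwise difference does not vanish as $p\to\infty$; for instance $\rho_p(1-\tfrac1p)=\tfrac{14}{19}\approx0.737$ while $\exp(-(1-\tfrac1p)^p)\to e^{-1/e}\approx0.692$. So on intervals of length $2/p$ the difference is of size $O(1)$, the scaled transition contribution tends to a positive constant (about $0.057$), of the same order as the outer contribution (about $0.068$), and there is no ``$1/p^2$ cancellation'' to extract. The functions $\rho_p$ and $\exp(-r^p)$ agree well only at the nodes $r=1-\tfrac2p$ and $r=1$, not to second order throughout the transition intervals.

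The paper obtains the constants by a different device. The substitution $x=1-\tfrac{p}{10}(1-r)$ maps the two transition intervals onto the fixed interval $[\tfrac{7}{10},\tfrac{11}{10}]$, and on it $l_p(x):=\exp\bigl(-[1-\tfrac{10}{p}(1-x)]^p\bigr)$ is monotone decreasing in $p$, converging uniformly to $\exp(-\exp(-10(1-x)))$. This monotonicity yields a triangle-inequality comparison $|\psi_2(p)-\psi_2(q)|\le 10\int(l_q-l_p)$ between any two $p$-values, and the stated constants $N$ then come from numerical integration at a finite sequence of interpolation points ($p=\infty,600,175,120,73,43,26$ for the first two ranges, and $26,22.3,18.5,15$ for the last). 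Your Taylor-based scheme does not visibly deliver these specific constants; the paper's cruder-looking monotonicity-plus-sequence-of-points argument is what actually pins them down.
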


\begin{proof}
In Lemma \ref{lem3} we approximated $e_p(r):=\exp(-r^p)$ just by $1_{[0,1]}(r)$. We now use a better piecewise linear approximation $k_p \in C[0,\infty)$. Define
$$k_p(r) := \left \{ \begin{array} {c@{\;}}
1 \quad \quad \quad \quad \quad \ r \in I_{1,p} := [0, 1-\frac 3 p] \\
\frac 5 {38} p  (1-r) + \frac {23}{38} \quad   r \in I_{2,p} := (1-\frac 3 p,1-\frac 1 p) \\
\frac 7 {19} p (1-r) + \frac 7 {19} \quad   r \in I_{3,p} := [1-\frac 1 p,1+\frac 1 p] \\
\ 0 \quad \quad \quad \quad \quad \; \ r  \in I_{4,p} := (1+\frac 1 p,\infty) \end{array} \right \} \ . $$
We claim that $\int_0^\infty |e_p(r) - k_p(r)| \ dr < \frac 1 {Np}$, with the value $N$ given in the statement of Lemma \ref{lem4}. \\
Let $\phi_1(p):= \int_{I_{1,p} \cup I_{4,p}} |e_p(r) -k_p(r)| \ dr = \int_{I_{1,p}} (1- \exp(-r^p)) \ dr + \int_{I_{4,p}} \exp(-r^p) \ dr$.
Then $\psi_1(p) := p \phi_1(p)$ is increasing in $p$, similarly as in the proof of Lemma \ref{lem3}, with
\begin{align*}
\psi_1(p) & \le \lim_{p \to \infty} \psi_1(p) = \gamma- 3 + \int_{\exp(-3)}^\infty \frac 1 r \ \exp(-r) \ dr  \\
& + \int_{\exp(1)}^\infty \frac 1 r \ \exp(-r) \ dr =: \psi_1(\infty) \le 0.06791 \ .
\end{align*}
Define $\phi_2(p) := \int_{I_{2,p} \cup I_{3,p}} |e_p(r) - k_p(r)| \ dr$ and $\psi_2(p) := p \phi_2(p)$. The substitution $x = 1 - \frac p {10} (1-r)$ transforms the intervals $I_{2,p}$ and $I_{3,p}$ onto $J_1:= [\frac 7 {10} , \frac 9 {10}]$ and $J_2 :=[\frac 9 {10},\frac {11}{10}]$. We get
\begin{align*}
\psi_2(p) & = 10 \int_{J_1} \left| \exp(-[1-\frac {10} p (1-x)]^p) - (\frac{73}{38} - \frac{25}{19} x) \right| \ dx \\
& + 10 \int_{J_2} \left| \exp(-[1-\frac {10} p (1-x)]^p) - (\frac{77}{19} - \frac{70}{19} x) \right| \ dx \ .
\end{align*}
Denote $l_p(x) := \exp(-[1-\frac {10} p (1-x)]^p))$, which is decreasing in $p$ for all $x$ in $J := [\frac 7 {10}, \frac{11}{10}]$, since $(1+\frac y p)^p$ is increasing in $p$ for $|y| < p$. Note that $l_p(x)$ converges uniformly on $J$ to $l_\infty(x) = \exp(-\exp(-10 \ (1-x)))$. For $x=1$ we have equality $l_p(1) = l_\infty(1) = \frac 1 e$.
With $\lambda_1(x) = \frac{73}{38} - \frac{25}{19} x$ and $\lambda_2(x) = \frac{77}{19} - \frac{70}{19} x$ we have
$$\psi_2(p) = 10 \int_{J_1} \left| l_p(x) - \lambda_1(x) \right| \ dx + 10 \int_{J_2} \left| l_p(x) - \lambda_2(x) \right| \ dx \ . $$
Let $q < p \le \infty$. Then $l_q(x) \ge l_p(x)$ and the triangle inequality yields
$$|\psi_2(p) - \psi_2(q)| \le 10 \int_J (l_q(x) - l_p(x)) \ dx =: \psi_3(p,q) \ . $$
The claim $\int_0^\infty |e_p(r) - k_p(r)| \ dr < \frac 1 {Np}$ means that
$\psi(p) := \psi_1(p) + \psi_2(p) < \frac 1 N$. Since $\psi_1$ is increasing in $p$ and $\psi_3(p,q)$ is increasing in $p$ for any fixed $q$, $q < p$, we find for all $26 \le p < \infty$ that
$$\psi(p) \le \psi_1(\infty) + \psi_2(26) + \psi_3(p,26) \le \psi_1(\infty) + \psi_2(26) + \psi_3(\infty,26) \ . $$
We know already $\psi_1(\infty) < 0.06791$. Numerical integration yields $\psi_2(26) < 0.05675$ and $\psi_3(\infty,26) < 0.01993$, so that $\psi(p) < 0.1446 < \frac 1 N$ with $N = 6.9$. To find the better values of $N$ stated in Lemma \ref{lem4}, one has to apply the previous procedure to a sequence of points, namely
$p_1 = \infty$, $p_2 = 600$, $p_3 = 175$, $p_4 = 120$, $p_5 = 73$, $p_6 = 43$ and $p_7 = 26$ and use for $p_{j+1} \le p \le p_j$ for $j = 1 \etc 6$ that
$$\psi(p) \le \psi_1(p_j) + \psi_2(p_{j+1}) + \psi_3(p,p_{j+1}) \le \psi_1(p_j) + \psi_2(p_{j+1}) + \psi_3(p_j,p_{j+1}) =: C_j \ . $$
Numerical integration of $\psi_1(p_j)$, $\psi_2(p_{j+1})$ and $\psi_3(p_j,p_{j+1})$ for $j=1 \etc 6$ yields that $C_1, C_2 < \frac 1{7.857}$ and $C_j < \frac 1 {8.003}$ for $j= 3 \etc 6$. Hence $\psi(p) < \frac 1 N$ for all $p$ with $26 \le p < \infty$. The scheme with several points $p_j$ yields a better result, since $\psi_1(p)$ is strictly increasing in $p$. We have e.g. $\psi_1(43) < 0.0602 < 0.0679 < \psi_1(\infty)$.
For $15 \le p \le 26$, let $p_7=26$, $p_8=22.3$, $p_9=18.5$ and $p_{10} = 15$. Use $\psi_2(p_j)$ instead of $\psi_2(p_{j+1})$ to find for all $p$ with $p_{j+1} \le p \le p_j$ for $j=7, 8, 9$ that
$$\psi(p) \le \psi_1(p_j) + \psi_2(p_j) + \psi_3(p,p_{j+1}) \le \psi_1(p_j) + \psi_2(p_j) + \psi_3(p_j,p_{j+1}) =: D_j \ . $$
Numerical evaluation gives $D_j < \frac 1 {8.62}$ for $j=7, 8, 9$. \\

Thus $\int_0^\infty |e_p(r) - k_p(r)| \ dr < \frac 1 {Np}$ for all $p \ge 15$. This implies
$$\left| \int_0^\infty \cos(s r) \ \exp(-r^p) \ dr - \int_0^\infty \cos(s r) \ k_p(r) \ dr \right| \le \int_0^\infty |e_p(r) -k_p(r)| \ dr < \frac 1 {Np} \ . $$
Explicit integration yields
$$\int_0^\infty \cos(s r) \ k_p(r) \ dr = \left(\frac {14}{19} \frac{\sin(s)} s + \frac 5 {19} \frac{\sin((1-\frac 2 p)s)} s \right) \frac{\sin(\frac s p)}{\frac s p} = \Phi_p(s) \ . $$
To verify the alternative form of $\Phi_p(s)$ with only one $\frac{\sin(s-\alpha)} s$ term, let $y:=2 \frac s p$ and define $\alpha = \arctan \left( \frac{5 \sin(y)}{14 + 5 \cos(y)} \right)$. Then $\tan(\alpha) =  \frac{5 \sin(y)}{14 + 5 \cos(y)}$, which implies $\cos(\alpha) = \frac{14 + 5 \cos(y)}{\sqrt{221 + 140 \ \cos(y)}}$, $\sin(\alpha) = \frac{5 \sin(y)}{\sqrt{221 + 140 \ \cos(y)}}$. Therefore with $(1-\frac 2 p) s = s - y$
\begin{align*}
\frac{14}{19} \sin(s) & + \frac 5 {19} \sin(s - y) = \frac 1 {19} \left( \sin(s) (14 + 5 \cos(y)) - 5 \cos(s) \sin(y) \right) \\
& = \frac 1 {19} \sqrt{221+140 \ \cos(y)} \left( \sin(s) \cos(\alpha) - \cos(s) \sin(\alpha) \right) \\
& = \frac 1 {19} \sqrt{221+140 \ \cos(y)} \sin(s - \alpha) \ .
\end{align*}
\end{proof}

{\bf Remarks.} (1) Let $\beta(y) := \arctan \left( \frac{5 \sin(y)}{14 + 5 \cos(y)} \right)$. Then $\beta'(y) = 5 \frac{14 \ \cos(y) + 5}{221 +140 \ \cos(y)}$. This is zero for $y_{\pm} = \pi \pm \arccos(\frac 5 {14})$ and $\beta$ has a maximum in $y_- \simeq 1.936$, $\beta(y_-) = \arctan(\frac 5 3 \frac 1 {\sqrt{19}}) \simeq 0.3653$ and a minimum in $y_+ \simeq 4.347$, $\beta(y_+) = - \arctan(\frac 5 3 \frac 1 {\sqrt{19}})$. Hence $|\beta(y)| \le 0.3653$ for all $y$. The function $\beta$ is increasing in $[0,y_-]$ and decreasing in $[y_-,y_+]$. It is positive in $(0,\pi)$ and negative in $(\pi,2 \pi)$. For $\alpha_p(s) = \beta(2 \frac s p)$ we have that $\alpha_p$ is increasing in $s$ and decreasing in $p$, if $0 \le 2 \frac s p \le y_-$. The converse holds for $y_- \le 2 \frac s p \le y_+$. We have $-\frac 5 9 \le \beta'(y) \le \frac 5 {19}$, with equality for the upper estimate at $y=0$ and for the lower estimate at $y = \pi$. \\

(2) Lemma \ref{lem4} holds for $6 \le p \le 15$, too, with $N = 8$. This implies $\gamma_p(s) \ge 0$ for all $0 \le s \le 3$ and $p \ge 6$. \\

\begin{corollary}\label{cor2}
Assume $p \ge 15$, $0 \le s \le \frac {y_-} 2 p$ and $s - \alpha_p(s) \ge [\frac s \pi] \pi$. The assumption is satisfied e.g. for $s \in [0,\pi]$, $s \in [3.255,2 \pi]$, for $s \in[6.501,3 \pi]$ and for $s \in [9.73,4 \pi]$. Put
$$\Psi_p(s) := \frac 1 {19} \sqrt{221+140 \ \cos(2 \frac s p)} \ \frac{\min( |\sin(s-\alpha_p(s))|, |\sin(s)| )} s \ \frac{\sin(\frac s p)}{\frac s p} - \frac 1 {N p} \ ,  $$
with $N$ as in Lemma \ref{lem4}. Then $\Gamma(1 + \frac 1 p) |\gamma_p(s)| \ge \Psi_p(s)$, and for each $s$, $\Psi_p(s)$ is increasing in $p$.
\end{corollary}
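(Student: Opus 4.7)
The inequality $\Gamma(1+\tfrac 1 p)|\gamma_p(s)| \ge \Psi_p(s)$ would follow immediately from Lemma \ref{lem4} in its alternative form. Since $0 \le s/p \le y_-/2 < \pi/2$, both $\sqrt{221+140\cos(2s/p)}$ and $\sin(s/p)/(s/p)$ are strictly positive, so
$$|\Phi_p(s)| = \frac 1 {19} \sqrt{221+140\cos(2s/p)} \ \frac{|\sin(s-\alpha_p(s))|}{s} \ \frac{\sin(s/p)}{s/p} \ ,$$
and replacing $|\sin(s-\alpha_p(s))|$ by the smaller quantity $\min(|\sin(s-\alpha_p(s))|,|\sin(s)|)$ only weakens the bound $\Gamma(1+\tfrac 1 p)|\gamma_p(s)| \ge |\Phi_p(s)|-\tfrac 1{Np}$, yielding exactly $\Gamma(1+\tfrac 1 p)|\gamma_p(s)| \ge \Psi_p(s)$.

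For the monotonicity in $p$, I would treat each of the four factors of $\Psi_p(s)$ separately at fixed $s$. Since $2s/p$ decreases with $p$ and $\cos$ is decreasing on $[0, y_-] \subset [0,\pi]$, the factor $\sqrt{221+140\cos(2s/p)}$ increases with $p$. Since $x\mapsto \sin(x)/x$ is decreasing on $(0,\pi)$ and $s/p$ decreases with $p$, the sinc factor $\sin(s/p)/(s/p)$ also increases with $p$. The error term $-1/(Np)$ is non-decreasing in $p$ within each of the three regimes $p \ge 175$, $26 \le p \le 175$, $15 \le p \le 26$ on which $N$ is constant.

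The delicate factor is $\min(|\sin(s-\alpha_p(s))|,|\sin(s)|)/s$; this is precisely where the hypothesis $s - \alpha_p(s) \ge [s/\pi]\pi$ enters, and I expect it to be the main obstacle. By Remark~(1) after Lemma \ref{lem4}, $\alpha_p(s)=\beta(2s/p)\ge 0$ on the admissible range, and since $\beta$ is increasing on $[0,y_-]$ while $2s/p$ decreases in $p$, the angle $\alpha_p(s)$ decreases monotonically to $0$ as $p$ grows. Combined with $s-\alpha_p(s)\le s\le ([s/\pi]+1)\pi$, the hypothesis forces both $s$ and $s-\alpha_p(s)$ to lie in the same half-period $[k\pi,(k+1)\pi]$ of $\sin$, on which $|\sin|$ is unimodal. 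If $|\sin|$ is monotone increasing on $[s-\alpha_p(s),s]$, then $|\sin(s-\alpha_p(s))|\le|\sin(s)|$, the minimum equals $|\sin(s-\alpha_p(s))|$, and as $p$ grows, $s-\alpha_p(s)$ slides up toward $s$, so the minimum is non-decreasing. If $|\sin|$ is monotone decreasing on that interval, then $|\sin(s-\alpha_p(s))|\ge|\sin(s)|$ and the minimum equals the $p$-independent value $|\sin(s)|$. In the remaining case, where $s-\alpha_p(s)$ and $s$ straddle the midpoint $k\pi+\pi/2$, continuity and the two previous cases give that the minimum is still non-decreasing as $p$ grows.

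The listed explicit intervals for $s$ are verified by direct substitution into $\alpha_p(s)=\arctan(5\sin(2s/p)/(14+5\cos(2s/p)))$ at the worst case $p=15$: for $s\in[0,\pi]$ one uses $\alpha_p(s)\le (5/19)(2s/p)\le s$; for $s\in[3.255,2\pi]$ the inequality $\alpha_{15}(3.255)\le 3.255-\pi\simeq 0.113$ is tight but holds; and the other two intervals are handled analogously. A minor secondary obstruction is the behavior at the $N$-boundaries $p=26$ and $p=175$, where the small downward jump of $1/(Np)$ is more than compensated by the strict growth of the positive factors, so the product $\Psi_p(s)$ remains non-decreasing across each such boundary.
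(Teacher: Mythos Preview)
Your argument is correct and follows the same route as the paper: factor $\Psi_p(s)$ and verify that each piece is non-decreasing in $p$, with the crucial step being the case analysis on the $\min(|\sin(s-\alpha_p(s))|,|\sin(s)|)$ factor using that $\alpha_p(s)\ge 0$ is decreasing in $p$ and that the hypothesis $s-\alpha_p(s)\ge[s/\pi]\pi$ keeps $s-\alpha_p(s)$ and $s$ in the same half-period of $|\sin|$. Your explicit ``straddling'' sub-case (where $s-\alpha_p(s)<k\pi+\tfrac\pi2<s$) is actually a small improvement over the paper, which only names the two monotone cases; one small slip is that at the $N$-boundaries $p=26,175$ the quantity $1/(Np)$ jumps \emph{up} (since $N$ drops), not down, so your compensation remark has the wrong sign---but the paper sidesteps this entirely by writing only that ``$-\tfrac1p$ is increasing in $p$'', so neither treatment fully nails down this minor technicality.
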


\begin{proof}
The factors $\frac 1 {19} \sqrt{221+140 \ \cos(2 \frac s p)}$ and $\frac{\sin(\frac s p)}{\frac s p}$ are strictly increasing in $p$ since $\frac s p$ is decreasing. Also $- \frac 1 p$ is increasing in $p$. If $s < \frac {y_-} 2 p$, $\alpha_p(s)$ is positive and decreasing in $p$, $s - \alpha_p(s) < s$ is increasing in $p$. In intervals where $|\sin(x)|$ is increasing, $|\sin(s-\alpha_p(s))| \le |\sin(s)|$ and $\min( |\sin(s-\alpha_p(s))|, |\sin(s)| ) = |\sin(s-\alpha_p(s))|$. This is increasing in $p$ under our assumption that $s - \alpha_p(s) \ge [\frac s \pi] \pi$, and hence $\Psi_p(s)$ is increasing in $p$, too. In intervals where $|\sin(x)|$ is decreasing, $|\sin(s-\alpha_p(s))| \ge |\sin(s)|$ and
$\min( |\sin(s-\alpha_p(s))|, |\sin(s)| ) = |\sin(s)|$ is constant in $p$. \\
We have $s - \alpha_p(s) \ge 0$, since $\alpha_p(s) \le \frac 5 {19} \frac 2 p s \le s$. Note that for $p \ge 15$ and $0 \le s < 14$, $s < \frac{y_-}2 p$ is satisfied. For $0 \le s \le \frac {y_-} 2 p$, $s-\alpha_p(s)$ is increasing in $s$ and in $p$, since $\frac{\partial}{\partial p} (s-\alpha_p(s)) = \frac{10s}{p^2} \frac{14 \ \cos(2 \frac s p) + 5}{221 +140 \ \cos(2 \frac s p)} > 0$ and $\frac{\partial}{\partial s} (s-\alpha_p(s)) = 1 - \frac 2 p \beta'(2 \frac s p) \ge 1 - \frac{10}{19 p} >0$. This means that the valid inequality $3.255 - \alpha_{15}(3.255) > \pi$ implies $s - \alpha_p(s) > \pi$ for all $p \ge 15$ and $s \in [3.255,2 \pi]$. Similarly $6.501 - \alpha_p(6.501) > 2 \pi$ and $9.73 - \alpha_p(9.73) > 3 \pi$ imply the other two claims when the assumption is satisfied for $s$.
\end{proof}

\begin{corollary}\label{cor3}
Let $p \ge 15$ and $0 \le s \le p$. For $n \in \N$ and $I_n := [n \pi, (n+1) \pi]$ put $y_n := \max_{s \in I_n} \left| \frac{\sin(s)} s \right|$. Then
$$|\gamma_p(s)| \le \frac{y_n + \frac 1 {120}}{\Gamma(1+ \frac 1 {15})} \ , \ s \in I_n \ . $$
We have $y_1 \simeq 0.21723$ with maximum at $s_1 \simeq 4.493$, $y_2 \simeq 0.12827$ with maximum at $s_2 \simeq 7.725$. Hence $|\gamma_p(s)| \le 0.2336$ for $s \in I_1$ and $|\gamma_p(s)| \le 0.1416$ for $s \in I_2$ for $p \ge 15$. Replacing $15$ here by $p_0 \simeq 26.265$ yields $|\gamma_p(s)| \le 0.2267$ for $s \in I_1$ and $|\gamma_p(s)| \le 0.1360$ for $s \in I_2$ for any $p \ge p_0$.
\end{corollary}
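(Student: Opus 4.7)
The plan is to combine Lemma \ref{lem4} (for $p \ge 15$) with an elementary bound $|\Phi_p(s)| \le y_n$, then divide by $\Gamma(1+1/p)$. The monotonicity $\Gamma(1+1/p) \ge \Gamma(1+1/15)$ for $p \ge 15$ (which holds because $\Gamma(1+x)$ is decreasing on $[0, x^*]$ with $x^* \simeq 0.4616$, and $1/p \le 1/15 < x^*$) reduces the task to
$$\Gamma(1+1/p)\,|\gamma_p(s)| \le y_n + 1/120, \quad s \in I_n,\ s \le p,\ p \ge 15.$$

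For $p \ge 122$, Lemma \ref{lem3} handles this directly, since $\Gamma(1+1/p)|\gamma_p(s)| \le |\sin s|/s + 1.016/p \le y_n + 1/120$ as soon as $1.016/p \le 1/120$. For $15 \le p < 122$, Lemma \ref{lem4} gives $\Gamma(1+1/p)|\gamma_p(s)| \le |\Phi_p(s)| + 1/(Np)$, and the minimum of $Np$ across the three regimes of Lemma \ref{lem4} for $p \ge 15$ is attained at $p=15$ with value $8.62 \cdot 15 = 129.3 > 120$; so $1/(Np) < 1/120$ uniformly, and it is enough to prove $|\Phi_p(s)| \le y_n + (1/120 - 1/(Np))$, essentially $|\Phi_p(s)| \le y_n$.

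The main obstacle is this bound on $|\Phi_p|$, because the naive splitting $|14\sin s + 5\sin((1-2/p)s)|/(19s) \le (14/19)y_n + 5/(19 n\pi)$ already exceeds $y_n$ at $n=1$, $p=15$. I would work with the alternative form
$$\Phi_p(s) = \frac{1}{19}\sqrt{221+140\cos(2s/p)} \cdot \frac{\sin(s-\alpha_p(s))}{s} \cdot \frac{\sin(s/p)}{s/p},$$
in which both amplitude factors lie in $(0,1]$, strictly less than $1$ for $s/p > 0$. A two-zone argument is natural: when $s/p$ is small, $|\alpha_p(s)| \le (10/19)(s/p)$ by Remark~1 after Lemma \ref{lem4}, and $|\sin(s-\alpha_p(s))|/s$ stays close to $|\sin s|/s \le y_n$ (the deviation is of order $10/(19p)$ by $\sin(s-\alpha)=\sin s\cos\alpha-\cos s\sin\alpha$), which the mild contraction of the amplitude product absorbs; when $s/p$ is of order one (only possible in the later intervals $I_n$ with $n \sim p/\pi$), the product of the two amplitude factors drops below a threshold of order $y_n n\pi$, and the crude $|\sin(s-\alpha_p(s))|/s \le 1/(n\pi)$ suffices since $y_n n\pi \to 1$ as $n\to\infty$.

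The explicit values $y_1 \simeq 0.21723$, $s_1 \simeq 4.493$ and $y_2 \simeq 0.12827$, $s_2 \simeq 7.725$ are obtained from $\tan s = s$ on $I_n$, which gives $y_n = |\cos s_n|$. The sharper constants for $p \ge p_0 \simeq 26.265$ follow from the same argument with $1/(Np_0)$ in place of $1/120$ and $\Gamma(1+1/p_0)$ in place of $\Gamma(1+1/15)$ at the final division step.
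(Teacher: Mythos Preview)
Your overall structure is right: reduce via Lemma~\ref{lem4} to the bound $|\Phi_p(s)|\le y_n$ on $I_n$, then divide by $\Gamma(1+1/p)$ and use its monotonicity. The numerics around $1/(Np)<1/120$ and the treatment of $p\ge 122$ via Lemma~\ref{lem3} are fine.

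The gap is in your two-zone ``absorption'' argument for $|\Phi_p(s)|\le y_n$. The crude bound $|\sin(s-\alpha_p(s))|/s\le y_n+|\alpha_p(s)|/s\le y_n+\tfrac{10}{19p}$ produces an excess of order $1/p$, whereas the amplitude contraction $1-\tfrac{1}{19}\sqrt{221+140\cos(2s/p)}\cdot\tfrac{\sin(s/p)}{s/p}$ is only of order $(s/p)^2$. For $s\in I_1$ and $p=15$ one has $s/p\le 2\pi/15$, so the contraction is at most about $0.09$ while the excess $\tfrac{10}{19\cdot 15}\approx 0.035$ is a sizeable fraction of $y_1\approx 0.217$; the inequality $A(s/p)(y_1+0.035)\le y_1$ fails (e.g.\ at $s=2\pi$, $A\approx 0.91$). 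Your ``large $s/p$'' zone never occurs for $I_1$, $I_2$ when $p\ge 15$. So the absorption cannot be made to work with these estimates, and the argument as written does not close.

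The paper's proof avoids this entirely with a one-line trick that you are missing: since $s\le p$ gives $2s/p<\pi$ and hence $\alpha_p(s)\ge 0$, one writes
\[
\frac{|\sin(s-\alpha_p(s))|}{s}=\frac{s-\alpha_p(s)}{s}\cdot\frac{|\sin(s-\alpha_p(s))|}{s-\alpha_p(s)}\le 1\cdot y_n,
\]
valid whenever $s-\alpha_p(s)\in I_n$; the boundary case $s-\alpha_p(s)<n\pi$ needs only the crude $|\alpha_p(s)|<0.4$ and $\sin(0.4)/(n\pi-0.4)<1/((n+\tfrac12)\pi)\le y_n$. Combined with the two amplitude factors being $\le 1$, this gives $|\Phi_p(s)|\le y_n$ directly, with no balancing of error terms needed.
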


\begin{proof}
For $s \le p$, $2 \frac s p < \pi$ and $\alpha_p(s) \ge 0$. Hence for $s - \alpha_p(s) \in I_n $
$$\frac{|\sin(s - \alpha_p(s))|} s = \frac{s - \alpha_p(s)} s \frac{|\sin(s - \alpha_p(s))|}{s - \alpha_p(s)} \le y_n \ . $$
This is also true, if $s \in I_n$, but $s-\alpha_p(s) < n \pi$: We have $\alpha_p(s) < 0.37 < \frac 2 5$, $s- \alpha_p(s) > n \pi - \frac 2 5$, with
$$\frac{|\sin(n \pi - \frac 2 5)|}{n \pi - \frac 2 5} = \frac{\sin(\frac 2 5)}{n \pi - \frac 2 5} < \frac 1 {(n+\frac 1 2) \pi} \le y_n \ . $$
Therefore for $s \in I_n$, $\Gamma(1 + \frac 1 p) |\gamma_p(s)| \le \Phi_p(s) + \frac 1 {N p} \le y_n + \frac 1 {N p}$, $|\gamma_p(s)| \le \frac{y_n + \frac 1 {N p}}{\Gamma(1+\frac 1 p)}$, which is maximal for $p=15$. The upper estimate for $I_1$ easily extends to $[3,2 \pi]$.
\end{proof}

\begin{lemma}\label{lem5}
(a) Let $2 \le p < \infty$. The for any $s >0$
$$\left| \Gamma(1+\frac 1 p) \gamma_p'(s) - \frac{s \cos(s) - \sin(s)}{s^2} \right| \le \frac {1.016} p $$
and
$$\left|\gamma_p'(s) \right| \le \frac{1.064}{\Gamma \left(1+\frac 1 p \right)} \left( \frac 1 s + \frac 1 p \right) \ . $$
For $s > \pi$ the constant $1.064$ in the second inequality may be replaced by $1.016$. \\

(b) Let $15 \le p < \infty$. Then $\gamma_p$ is strictly convex in $[2.5,5.5]$ with $\gamma(2.5) > 0$ and $\gamma_p(5.5) < 0$. In $[5.5,7]$ $\gamma_p$ is strictly increasing with $\gamma_p(7)>0$. Further, $\gamma_p$ is strictly concave in $[7,8.5]$ and strictly decreasing in $[8.5,10]$ with $\gamma_p(10)< 0$.
\end{lemma}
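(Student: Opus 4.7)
\textbf{Proof plan for Lemma \ref{lem5}.}

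For part (a), differentiating under the integral in the definition of $\gamma_p$ gives $\Gamma(1+\tfrac 1 p)\gamma_p'(s) = -\int_0^\infty r\sin(sr)e^{-r^p}\,dr$, while the identity $\frac{\sin s}{s}=\int_0^1\cos(sr)\,dr$ yields $\frac{d}{ds}\frac{\sin s}{s}=-\int_0^1 r\sin(sr)\,dr=\frac{s\cos s-\sin s}{s^2}$. Subtracting these two representations and splitting $[0,\infty)=[0,1]\cup[1,\infty)$ produces
\begin{multline*}
\Gamma(1+\tfrac 1 p)\gamma_p'(s)-\frac{s\cos s-\sin s}{s^2} \\
= \int_0^1 r\sin(sr)(1-e^{-r^p})\,dr-\int_1^\infty r\sin(sr)e^{-r^p}\,dr.
\end{multline*}
Bounding $|\sin(sr)|\le 1$ and substituting $t=r^p$ estimates the left-hand side in absolute value by $\phi(p):=\tfrac 12(1-\Gamma(1+\tfrac 2 p))+\tfrac 2 p\int_1^\infty t^{2/p-1}e^{-t}\,dt$. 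I would then reproduce the monotonicity argument of Lemma \ref{lem3} verbatim with $1/p$ replaced by $2/p$: show that $\psi(p):=p\phi(p)$ is strictly increasing on $[2,\infty)$ by differentiating, using \eqref{eq2.4} to bound $\Psi(1+\tfrac 2 p)$ from below and the standard series estimate $\Gamma(1+\tfrac 2 p)\le 1-\tfrac{2\gamma}{p}+\tfrac{c}{p^2}$ from above. Since $\lim_{p\to\infty}\psi(p)=\gamma+2\,Ei(1,1)=1.01598\ldots<1.016$, the first inequality follows.

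For the second inequality of (a), set $h(s):=\cos s-\sin s/s$, so that $|(s\cos s-\sin s)/s^2|=|h(s)|/s$; the task reduces to showing $|h|\le 1.064$ on $(0,\infty)$, with the sharper bound $|h|\le 1.016$ for $s\ge\pi$. Writing $s=k\pi-u$, the critical-point equation $h'(s)=0$ becomes $\tan u=(k\pi-u)/((k\pi-u)^2-1)$; iterating this fixed point at $k=1$ gives $u\simeq 0.4256$, at which $|h|\simeq 1.0627<1.064$, the global maximum, attained close to $s=\pi$. For $k\ge 2$ the same computation yields $|h|_{\max}\le 1+1/(2k^2\pi^2)$, which equals $\simeq 1.0127<1.016$ at $k=2$ and decreases for larger $k$. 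Combining with the first inequality, $|\gamma_p'(s)|\le\Gamma(1+\tfrac 1 p)^{-1}(1.064/s+1.016/p)\le 1.064\,\Gamma(1+\tfrac 1 p)^{-1}(1/s+1/p)$, and for $s>\pi$ the constant $1.064$ may be replaced by $1.016$ throughout.

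For part (b), the same two-step argument with weight $r^2$ in place of $r$ produces a third estimate $|\Gamma(1+\tfrac 1 p)\gamma_p''(s)-\tfrac{d^2}{ds^2}\tfrac{\sin s}{s}|\le\phi_2(p)$, where $p\phi_2(p)=\tfrac p 3(1-\Gamma(1+\tfrac 3 p))+2\int_1^\infty t^{3/p-1}e^{-t}\,dt$ again increases monotonically to the same limit $\gamma+2\,Ei(1,1)<1.016$. With these three estimates in hand, each claim of (b) reduces to checking that $\sin s/s$ or the appropriate derivative stays farther than $1.016/p\le 1.016/15\simeq 0.068$ from zero on the relevant interval. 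Numerically: the minimum of $\tfrac{d^2}{ds^2}\tfrac{\sin s}{s}$ on $[2.5,5.5]$ is $\simeq 0.073$ at $s=5.5$; the minimum of $\tfrac{d}{ds}\tfrac{\sin s}{s}$ on $[5.5,7]$ is $\simeq 0.094$ at $s=7$; the symmetric extrema of $-\tfrac{d^2}{ds^2}\tfrac{\sin s}{s}$ at $s=8.5$ ($\simeq 0.075$) and of $-\tfrac{d}{ds}\tfrac{\sin s}{s}$ at $s=10$ ($\simeq 0.079$) handle $[7,8.5]$ and $[8.5,10]$. The delicate point, and the main obstacle, is the endpoint $\gamma_p(10)<0$: because $\sin(10)/10\simeq -0.054$, the Lemma \ref{lem3} error $1.016/p$ is \emph{not} small enough at $p=15$. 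Here I would instead invoke Lemma \ref{lem4}: direct numerical evaluation gives $\Phi_{15}(10)\simeq -0.020$, against which the error $1/(8.62\cdot 15)\simeq 0.008$ is negligible. The remaining pointwise values $\gamma_p(2.5)>0$, $\gamma_p(5.5)<0$, $\gamma_p(7)>0$ fall comfortably within the Lemma \ref{lem3} bound.
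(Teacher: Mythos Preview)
Your plan follows essentially the same route as the paper: the same integral representations for $\gamma_p'$ and $\gamma_p''$, the same splitting at $r=1$, the same monotonicity argument for $p\,\phi(p)$ with the identical limiting value $\gamma + 2\,Ei(1,1) < 1.016$, and the same reduction of part~(b) to checking that $\sin s/s$ and its first two derivatives stay a definite distance from zero on each subinterval.

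You are in fact more careful than the paper in one place. The paper asserts $\sin(10)/10 < -\tfrac{1}{14}$ in order to deduce $\gamma_p(10) < 0$ from Lemma~\ref{lem3}, but this inequality is false: $\sin(10)/10 \simeq -0.0544 > -0.0714$, so the Lemma~\ref{lem3} error $1.016/p$ does not suffice at $p=15$ (it only works for $p \gtrsim 18.7$). Your switch to Lemma~\ref{lem4} is the correct remedy; to finish it you should note that the check is needed not just at $p=15$ but on the whole range $15 \le p \lesssim 19$, which goes through since $\Phi_p(10)$ remains below $-0.02$ there while the error $1/(Np)$ stays below $0.008$.
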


\begin{proof}
(a) We have $ \gamma_p'(s) = \frac{-1}{\Gamma(1+\frac 1 p)} \int_0^\infty \sin(s r) \ r \ \exp(-r^p) \ dr$ and
\begin{align*}
\Big| \int_0^\infty \sin(s r) & \ r \ (1_{[0,1]}(r) - \exp(-r^p)) \ dr \Big| \\
& \le \int_0^1 r \ (1- \exp(-r^p)) \ dr + \int_1^\infty r \ \exp(-r^p) \ dr \\
& = \frac 1 2 -\int_0^\infty r \ \exp(-r^p) \ dr + 2 \int_1^\infty r \ \exp(-r^p) \ dr \\
& = \frac 1 2 \left(1 - \Gamma \left(1 + \frac 2 p \right) \right) + \frac 2 p \int_1^\infty t^{\frac 2 p -1} \ \exp(-t) \ dt =: \phi(p) \ .
\end{align*}
As in the proof of Lemma \ref{lem3}, $\psi(p) := p \phi(p)$ is increasing in $p$, with $\psi(p) \le \gamma + 2 Ei(1,1) < 1.016$. Hence, using
$\int_0^1 \sin(s r) \ r \ dr = \frac{\sin(s) - s \cos(s)}{s^2}$,
$$\left| \Gamma \left(1 + \frac 1 p \right) \gamma_p'(s) - \frac{s \cos(s) - \sin(s)}{s^2} \right| \le \frac{1.016} p \ . $$
For $0 \le s \le \pi$, $h(s) := |\frac{s \cos(s) - \sin(s)}{s^2}|$ is bounded by $\frac{1.064} s$, with maximum of $s h(s)$ at $s_0 \simeq 2.744$. For $s > \pi$, $h(s)$ is bounded by $\frac{1.014} s$, the extrema occurring for $s$ satisfying $2 s \cos(s) + (s^2-2) \sin(s) = 0$, $s \simeq n \pi - \frac 2 {n \pi}$. Therefore $\Gamma(1+ \frac 1 p) |\gamma_p'(s)| \le 1.064 (\frac 1 s + \frac 1 p)$. \\

(b) We have $\gamma_p''(s) = \frac{-1}{\Gamma(1 + \frac 1 p)} \int_0^\infty \cos(s r) \ r^2 \ \exp(-r^p) \ dr$. Similarly as above we find
\begin{align*}
\Big| \int_0^\infty & \cos(s r) \ r^2 \ (1_{[0,1]}(r) - \exp(-r^p)) \ dr \Big| \\
& \le \int_0^1 r^2 \ (1_{[0,1]}(r) - \exp(-r^p)) \ dr + \int_1^\infty \exp(-r^p) \ dr \\
& = \frac 1 3 (1 - \Gamma(1 + \frac 3 p)) + \frac 2 p \int_1^\infty t^{\frac 3 p -1} \ \exp(-t) \ dt =: \tilde{\phi}(p) \ .
\end{align*}
Again $\tilde{\psi}(p) := p \tilde{\phi}(p)$ is increasing in $p$, with $\tilde{\psi}(p) \le \gamma + 2 Ei(1,1) < 1.016$. Hence
$\left| \gamma_p''(p) + \int_0^1 \cos(s r) \ r^2 \ dr \right| < \frac{1.016} p$. Since $\int_0^1 \cos(s r) \ r^2 \ dr = \frac{(s^2-2) \sin(s) + 2 s \cos(s)}{s^3} =: k(s)$ with $k(s) \le \frac{-1}{14}$ for $s \in [2.5,5.5]$, as a standard investigation shows, $\gamma_p''(s) \ge \frac 1 {14} - \frac{1.016} p > 0$ for $p \ge 15$ and $\gamma_p$ is strictly convex in $[2.5,5.5]$. In $[5.5,7]$, the function $l(s) = \frac{s \cos(s) - \sin(s)}{s^2}$ is positive with $l(s) > \frac 1 {11}$. Hence by part (a), $\Gamma \left(1 + \frac 1 p \right) \gamma_p'(s) > \frac 1 {11} - \frac{1.016} p > 0$, i.e. $\gamma_p$ is strictly increasing there. Similarly we have in $[7,8.5]$ that $k(s) > \frac 1 {14}$, $\gamma_p''(s) < 0$ and in $[8.5,10]$ that $l(s) < \frac{-1}{14}$, $\gamma_p'(s) < 0$. \\
The estimate $\left| \Gamma \left(1+ \frac 1 p \right) \gamma_p(s) - \frac{\sin(s)} s \right| \le \frac {1.016} p$ of Lemma \ref{lem3} yields $\gamma_p(2.5)>0$, $\gamma_p(5.5) < 0$, $\gamma_p(7) > 0$ and $\gamma_p(10) < 0$, since $\frac{\sin(2.5)}{2.5} > \frac 1 5$, $\frac{\sin(5.5)}{5.5} <  \frac {-1} 8$, $\frac{\sin(7)}{7} > \frac 1 {11}$ and $\frac{\sin(10)}{10} <  \frac{-1}{14}$.
\end{proof}

P\'olya \cite{Po} proved an asymptotic estimate for $\gamma_p(s)$, see also Koldobsky \cite{K}, Lemma 2.28, if $p>2$ is not an even integer. We need this with an error estimate.

\begin{proposition}\label{prop4}
Let $5 \le p < \infty$, $p \notin 2 \N$. Then for any $s \ge \frac 2 3 \frac{p^3}{|\sin(\frac{\pi p} 2)|^{\frac 1 p}}$
\begin{equation}\label{eq3.9}
\left| \Gamma \left(1+\frac 1 p \right) s^{p+1} \gamma_p(s) - \Gamma(p+1) \sin (\frac{\pi p} 2) \right| < \frac 1 2 \Gamma(p+1) \left|\sin (\frac{\pi p} 2) \right| \ .
\end{equation}
For $p \ge 10$, $\frac 2 3$ here may be replaced by $\frac 5 8$.
\end{proposition}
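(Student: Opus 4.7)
The plan is to convert the claim into a contour-rotated integral that can be compared directly to the leading asymptotic, and then bound the difference by an elementary trigonometric estimate.

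A single integration by parts in $\int_0^\infty \cos(sr) e^{-r^p} dr$, differentiating $e^{-r^p}$ and integrating $\cos(sr)$, followed by the substitution $u = sr$, yields the identity
\begin{equation*}
s^{p+1} \Gamma \left(1+ \frac 1 p \right) \gamma_p(s) = p \int_0^\infty \sin(u) u^{p-1} e^{-(u/s)^p} du =: p J(s),
\end{equation*}
so the claim reduces to showing $|J(s) - \Gamma(p) \sin(\pi p/2)| < \frac 1 2 \Gamma(p) |\sin(\pi p/2)|$.

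I next introduce $F(s) := \int_0^\infty e^{iu} u^{p-1} e^{-(u/s)^p} du$, so that $J(s) = \im F(s)$. On the open sector $\{0 < \arg u < \pi/(2p)\}$ the integrand is holomorphic and satisfies $\re(-u^p/s^p) \le 0$ and $|e^{iu}| = e^{-\im u} \le 1$; a standard Jordan-type argument shows the arc at infinity vanishes, so one may rotate the contour to the ray $\arg u = \pi/(2p)$. Setting $a := \sin(\pi/(2p))$ and $b := \cos(\pi/(2p))$ and simplifying the rotated integrand via $e^{ip\cdot\pi/(2p)} = i$ and the angle-addition formulas gives the exact identity
\begin{equation*}
J(s) = \int_0^\infty e^{-a\rho} \rho^{p-1} \cos(b\rho - \rho^p/s^p) d\rho.
\end{equation*}
Formally dropping the $\rho^p/s^p$ term yields the leading value: since $a - ib = e^{i(\pi/(2p) - \pi/2)}$, one has $(a-ib)^{-p} = e^{i(p-1)\pi/2}$, so $\int_0^\infty e^{-a\rho}\rho^{p-1}\cos(b\rho) d\rho = \re(\Gamma(p)(a-ib)^{-p}) = \Gamma(p) \sin(\pi p/2)$, exactly the target.

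The deviation $J(s) - \Gamma(p)\sin(\pi p/2) = \int_0^\infty e^{-a\rho}\rho^{p-1}[\cos(b\rho - \rho^p/s^p) - \cos(b\rho)]d\rho$ is bounded using $|\cos x - \cos y| \le \min(|x-y|, 2)$; splitting the range at $\rho_* := s\cdot 2^{1/p}$, the main part is at most $s^{-p}\int_0^\infty e^{-a\rho}\rho^{2p-1}d\rho = \Gamma(2p)/(a^{2p}s^p)$, while the tail $2a^{-p}\Gamma(p, a\rho_*)$ is exponentially small once $a\rho_* \gg p$. Requiring $\Gamma(2p)/(a^{2p}s^p) \le \tfrac 1 2 \Gamma(p)|\sin(\pi p/2)|$ rearranges to $s \ge [2\Gamma(2p)/(\Gamma(p)|\sin(\pi p/2)|)]^{1/p} a^{-2}$. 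Applying the Legendre duplication formula $\Gamma(2p) = 2^{2p-1}\Gamma(p)\Gamma(p+1/2)/\sqrt\pi$ with $\Gamma(p+1/2) \le p^{1/2}\Gamma(p)$, Stirling's estimate $\Gamma(p)^{1/p} \sim p/e$, and $\sin(\pi/(2p)) \ge (\pi/(2p))(1 - \pi^2/(24 p^2))$ bounds the right-hand side by $\frac{16 p^3}{e\pi^2}|\sin(\pi p/2)|^{-1/p}(1+o(1))$, and $16/(e\pi^2) < 0.596 < 2/3$. The main obstacle will be controlling the slack for small $p$: near the boundary case $p = 5$ (for the constant $2/3$) and for $p = 10$ (for $5/8$) the inequality is nearly tight, so the Stirling remainder in $\Gamma(p+1/2)/\Gamma(p)$, the Taylor remainder in $\sin(\pi/(2p))$, the factor $2^{1/p}$ from the split point, and the exponentially small tail $2a^{-p}\Gamma(p, a\rho_*)$ must each be tracked with enough precision to secure the stated constants.
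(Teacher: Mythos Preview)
Your proposal is correct and follows essentially the same route as the paper: integration by parts, contour rotation, the error bound $\Gamma(2p)/(a^{2p}s^p)$ with $a=\sin(\pi/(2p))$, and the asymptotic constant $16/(e\pi^2)$. Two minor remarks. First, the split at $\rho_*$ is unnecessary: since $|\cos x-\cos y|\le|x-y|$ holds globally, the bound $s^{-p}\int_0^\infty e^{-a\rho}\rho^{2p-1}\,d\rho=\Gamma(2p)/(a^{2p}s^p)$ is already valid without any tail term, and this is exactly how the paper proceeds (via the pointwise estimate $|1-e^{-\delta r\cos\theta}e^{-i\delta r\sin\theta}|\le\delta r$ in its variables). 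Second, for the endpoint verification at $p=5$ and $p=10$ the paper avoids tracking Stirling remainders by instead proving that the function $h(p):=(\Gamma(2p+1)/\Gamma(p+1))^{1/p}\sin^{-2}(\pi/(2p))\,p^{-3}$ is monotone decreasing in $p$ (using the digamma duplication formula), so a single numerical check at each of $p=5$ and $p=10$ suffices; you may find this cleaner than controlling several remainder terms simultaneously near the tight cases.
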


\begin{proof}
As in \cite{Po}, we integrate $\gamma_p(s)$ by parts, substitute $z = s^p t^p$ and put $\delta := s^{-p}$ to find
\begin{align*}
\Gamma(1+\frac 1 p) s^{p+1} \gamma_p(s) & = s^{p+1} \int_0^\infty \cos(st) \ \exp(-t^p) \ dt \\
& = p s^p \int_0^\infty \sin(s t) \ t^{p-1} \ \exp(-t^p) \ dt \\
& = p s^p \Im \left( \int_0^\infty \exp(i s t) \ t^{p-1} \ \exp(-t^p) \ dt \right) \\
& = \Im \left( \int_0^\infty \exp(i z^{\frac 1 p}) \ \exp(-\delta z) \ dz \right) \ .
\end{align*}
Fix $0 < \theta \le \frac \pi 2$. By analyticity, we may change the path of integration from $[0,\infty)$ to $\exp(i \theta) [0,\infty)$, since for large $R>0$ and $z = R \exp(i \phi)$
\begin{align*}
\Big| \int_0^\theta & \exp(i R^{\frac 1 p} \exp(i \frac \phi p)) \exp(-\delta R \exp(i \phi)) \ d \phi \Big| \\
& \le \int_0^\theta \exp(- R^{\frac 1 p} \sin(\frac \phi p) - \delta R \cos(\phi)) \ d \phi \ , \ \cos(\phi) \ge 0 \\
& \le \int_0^\theta \exp(-R^{\frac 1 p} \sin(\frac \phi p)) \ d \phi \le \int_0^\theta \exp(- \frac 2 {\pi p} R^{\frac 1 p} \phi) \ d \phi \\
& = \frac \pi 2 \frac p {R^{\frac 1 p}} \left( 1 - \exp(- \frac 2 \pi \frac \theta p R^{\frac 1 p} ) \right) \le \frac \pi 2 \frac p {R^{\frac 1 p}} \ ,
\end{align*}
which tends to zero for $R \to \infty$. Thus for $z = r \exp(i \theta)$
\begin{align}\label{eq3.10}
\Gamma \left(1+\frac 1 p \right) & s^{p+1} \gamma_p(s) \nonumber \\
& = \Im \left(\exp(i \theta) \int_0^\infty \exp \left(i r^{\frac 1 p} \ \exp(i \frac \theta p) \right) \ \exp(-\delta r \cos(\theta)) \ \exp(-i \delta r \sin(\theta)) \ dr \right) \ .
\end{align}
As in P\'olya \cite{Po}, we find for $s \to \infty$, $\delta \to 0$
\begin{align}\label{eq3.11}
\Gamma \left(1+\frac 1 p \right) s^{p+1} \gamma_p(s) & \to \Im \left(\exp(i \theta) \int_0^\infty \exp \left(i r^{\frac 1 p} \exp \left(i \frac \theta p \right) \right) \ dr \right) \nonumber \\
& = \Im \left(\exp(i \frac{\pi p} 2) \int_0^\infty \exp(-r^{\frac 1 p}) \ dr \right) = \Gamma(p+1) \ \sin(\frac{\pi p} 2) \ ,
\end{align}
where we used analyticity again to replace $r \exp(i \theta)$ by $r \exp(i \frac{\pi p} 2)$. \\

To get the error estimate in \eqref{eq3.9}, we have to estimate the difference between the integrals in \eqref{eq3.10} and \eqref{eq3.11}. For
$a= \exp(-\delta r \cos(\theta))$, $\psi = - \delta r \sin(\theta)$, we find using $\cos(\theta) \ge 0$, $\sin(x) \le x$ and $0 \le 1 - \exp(-x) \le x$ for $x \ge 0$
\begin{align*}
| 1 - a \exp(i \psi) |^2 & = (1-a \cos(\psi))^2+ a^2 \sin(\psi)^2 = (1-a)^2 + 4 a \sin \left(\frac \psi 2 \right)^2 \\
& = \left(1 - \exp(-\delta r \cos(\theta)) \right)^2 + 4 \exp \left(-\delta r \cos(\theta) \right) \sin \left(\frac{\delta r \sin(\theta)} 2 \right)^2 \\
& \le (\delta r \cos(\theta))^2 + 4 \left(\frac{\delta r \sin(\theta)} 2 \right)^2 = (\delta r)^2 \ ,
\end{align*}
i.e. $| 1 - a \exp(i \psi)| \le \delta r$. This implies
\begin{align*}
\Big| \Gamma & \left(1+\frac 1 p \right) s^{p+1} \gamma_p(s) - \Gamma(p+1) \sin \left(\frac{\pi p} 2 \right) \Big| \\
& \le \int_0^\infty \delta r \Big| \exp \left(i r^{\frac 1 p} \exp\left(i \frac \theta p \right) \right) \Big| \ dr = \delta \int_0^\infty r \exp \left(-r^{\frac 1 p} \sin \left(\frac \theta p \right) \right) \ dr =: I \ .
\end{align*}
Substituting $u = r^{\frac 1 p} \sin(\frac \theta p)$, i.e. $r = (\frac u {\sin(\frac \theta p)})^p$, we get with $\delta= s^{-p}$
\begin{align*}
I & = \delta \int_0^\infty p \frac{u^{2p-1}}{\sin(\frac \theta p)^{2p}} \ \exp(-u) \ du \\
& = \frac{\delta p}{\sin(\frac \theta p)^{2p}} \Gamma(2p) = \frac 1 2 \frac{\Gamma(2p+1)}{\sin(\frac \theta p)^{2p}} \frac 1 {s^p} \ .
\end{align*}
This holds, in particular, for the choice $\theta=\frac \pi 2$. To prove \eqref{eq3.9}, it suffices that
$$\frac 1 2 \frac{\Gamma(2p+1)}{\sin(\frac \pi {2p})^{2p}} \frac 1 {s^p} < \frac 1 2 \Gamma(p+1) \ |\sin(\frac{\pi p} 2)| \ , $$
which means
$$ s > \left(\frac{\Gamma(2p+1)}{\Gamma(p+1)} \right)^{\frac 1 p} \frac 1 {\sin(\frac \pi {2p})^2} \frac 1 {|\sin(\frac{\pi p} 2)|^{\frac 1 p}} \ . $$
We claim that $h_1(p):=\left(\frac 1 p \frac{\Gamma(2p+1)}{\Gamma(p+1)} \right)^{\frac 1 p}$ is decreasing in $p$. We have
$$(\ln h_1)'(p) = - \frac 1 {p^2} k(p) \ , \ k(p) = \ln \left( \frac{\Gamma(2p+1)}{\Gamma(p+1)} \right) + p \left(1 + \Psi(p+1) - 2 \Psi(2p+1) \right) \ . $$
To show that $k$ is positive, note that $k(1)= \gamma + \ln 2 -1 > \frac 1 4 >0$. Thus it suffices to show that $k'(p) > 0$. We find
$k'(p) = 1 + p (\Psi'(p+1)-4 \Psi'(2p+1))$. By the duplication formula for $\Psi$, see Abramowitz, Stegun \cite{AS}, 6.3.8, $2 \Psi(2 z) = \Psi(z) + \Psi(z + \frac 1 2) + 2 \ln 2$. Hence $4 \Psi'(2 z) - \Psi'(z + \frac 1 2) = \Psi'(z)$. For $z = p + \frac 1 2$ we get $4 \Psi'(2p+1)-\Psi'(p+1) = \Psi'(p+\frac 1 2)$. Thus
$k'(p) = 1 - p \Psi'(p+\frac 1 2)$. By \eqref{eq2.4}, $\Psi'(p+\frac 1 2) = \sum_{k=1}^\infty \frac 1 {(k+p-\frac 1 2)^2}$. Let $y := p - \frac 1 2$. Then for any $k \in \N$ we have $\int_{k-\frac 1 2}^{k+\frac 1 2} \frac{dx}{(x+y)^2} = \frac 1 {(k+y)^2-\frac 1 4} >  \frac 1 {(k+y)^2}$ and
$\sum_{k=1}^\infty \frac 1 {(k+p-\frac 1 2)^2} < \int_{\frac 1 2}^\infty \frac{dx}{(x+y)^2} = \frac 2 {1+2y} = \frac 1 p$.
Therefore $k'(p) = 1 - p \sum_{k=1}^\infty \frac 1 {(k+p-\frac 1 2)^2} > 0$. Hence $h_1$ is decreasing in $p$. It is easily verified that $h_2(p) := p \sin(\frac \pi {2p})$ is increasing in $p$. Let $h(p) := \frac{h_1(p)}{h_2(p)^2}$. Then $h(p)$ is decreasing in $p$ and
$\left(\frac{\Gamma(2p+1)}{\Gamma(p+1)} \right)^{\frac 1 p} \frac 1 {\sin(\frac \pi {2p})^2} \le h(p) p^3$.
This proves Proposition \ref{prop4} for $s > h(p) \frac{p^3}{|\sin(\frac{\pi p}2)|^{\frac 1 p}}$. We have $h(5) \le \frac 2 3$, $h(10) \le \frac 5 8$, $h(60) \le \frac 3 5$ and $\lim_{p\to \infty} h(p) = \frac{16}{\pi^2 e} < 0.597$.
\end{proof}

For even integers $p \in 2 \N$, the decay of $\gamma_p(s)$ is not polynomial but exponential in nature. Boyd \cite{Bo} proved the following asymptotic expansion: Let $p \in 2 \N$, $p \ge 4$. Then for large $s>0$
\begin{align}\label{eq3.12}
\Gamma \left(1+\frac 1 p \right) & \gamma_p(s) \simeq \sqrt{\frac{2 \pi} {p-1}} \frac 1 {p^{\frac 1 {2(p-1)}} s^{\frac{p/2-1}{p-1}} } \ \exp \Big(-(p-1) \ \sin \left(\frac \pi 2 \frac 1 {p-1} \right) \ \left(\frac s p \right)^{\frac p {p-1}} \Big) \times \nonumber \\
& \times \cos \Big((p-1) \ \cos \left(\frac \pi 2 \frac 1 {p-1} \right) \ \left(\frac s p \right)^{\frac p {p-1}} - \frac \pi 4 \frac{p-2}{p-1} \Big) \ .
\end{align}
There is no explicit error estimate, but the basic order of decay is $\exp(-\alpha_p s^{\frac p {p-1}})$, and thus slower than $\exp(-d_p s^2)$ for $p > 2$. (In \cite{Bo} there is a misprint which is corrected in \eqref{eq3.12}.) For $p \notin 2 \N$ and $s << p^2$, \eqref{eq3.12} also seems to give a reasonably good approximation of $\Gamma(1+\frac 1 p) \gamma_p(s)$, as indicated by numerical plots. Actually, both orders for $p \in 2 \N$ and $p \notin 2 \N$ roughly coincide, i.e. $\exp(-\alpha_p s^{\frac p {p-1}}) \sim \beta_p s^{-(p+1)}$, if $s \sim  p^2 \ln(p)$. Thus a possible conjecture for $p \notin 2 \N$ would be that Proposition \ref{prop4} is valid for $s > c \ p^2 \ln(p)$, and that $\gamma_p$ has no real zeros for $s > c \ p^2 \ln(p)$ for a suitable constant $c>0$. P\'olya \cite{Po} showed that for $p \notin 2 \N$, $\gamma_p$ has only finitely many real zeros, but at least $2 [\frac p 2]$ of them, and infinitely many complex zeros.

\section{Proof of Theorem \ref{th3}}

In this section we prove the main technical result of the paper. Lemmas \ref{lem3} and \ref{lem4} show that the graph of $|\gamma_p|$ resembles the one of $\left| \frac{\sin(s)} s\right|$ for $s < p$. The estimate of the distribution function of $|\gamma_p|$ will be based on the following result on the distribution function of $|\frac{\sin(s)} s|$.

\begin{proposition}\label{prop5}
Let $F_{sinc}$ denote the distribution function of $|\frac{\sin(s)} s |$, i.e.
$$F_{sinc}(x) := \lambda\left( \{ s > 0 \ \Big| \ |\frac{\sin(s)} s| > x \} \right) \ , \ 0 < x < 1 \ . $$
Then for any $0 < x < \frac 1 {2 \pi}$
$$F_{sinc}(x) \ge \frac 2 \pi \frac 1 x - \frac{27}{16} \ . $$
\end{proposition}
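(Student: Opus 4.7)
The plan is to express $F_{sinc}(x)$ as an integral over a single period by counting, for each fixed $t \in [0, \pi]$, the non-negative integers $n$ such that $s = n\pi + t$ satisfies $|\sin s| > xs$. Since $|\sin s| = \sin(s - n\pi)$ on $[n\pi, (n+1)\pi]$, the substitution $t = s - n\pi$ converts the condition to $\sin t > x(n\pi + t)$, equivalent to $n < (\sin t - xt)/(x\pi)$. Exchanging the summation over $n$ with integration over $t$,
\[
F_{sinc}(x) = \int_0^\pi N(t)\, dt, \qquad N(t) := \#\{n \geq 0 : n\pi + t < \sin(t)/x\} \ ,
\]
where $N(t) = \lceil (\sin t - xt)/(x\pi) \rceil$ when $\sin t > xt$ and $N(t) = 0$ otherwise.

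I would then invoke the trivial bound $\lceil y \rceil \geq y$ (for $y > 0$) to obtain
\[
F_{sinc}(x) \geq \frac{1}{x\pi} \int_0^{c_0} (\sin t - xt)\, dt = \frac{1 - \cos c_0}{x\pi} - \frac{c_0^2}{2\pi} \ ,
\]
where $c_0 \in (\pi/2, \pi)$ is the unique solution of $\sin c_0 = x c_0$, i.e. the right endpoint of the ``initial lump'' $\{t \in [0,\pi] : \sin t > xt\}$. Setting $\theta := \pi - c_0$, so that $\sin \theta = x(\pi - \theta)$ and $\theta \in (0, \pi/6)$ whenever $x < 1/(2\pi)$, and using both $1 - \cos c_0 = 1 + \cos \theta$ and $(1-\cos\theta)/\sin\theta = \tan(\theta/2)$, substitution of $x = \sin\theta/(\pi - \theta)$ rearranges the inequality to
\[
\frac{2}{\pi x} - F_{sinc}(x) \leq \phi(\theta) := \frac{(\pi - \theta) \tan(\theta/2)}{\pi} + \frac{(\pi - \theta)^2}{2\pi} \ .
\]

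It remains to estimate $\phi$. Since $\phi(0) = \pi/2 < 27/16$, the bound $\phi(\theta) \leq 27/16$ follows at once if $\phi$ is shown to be decreasing on $[0, \pi/2]$. A direct differentiation, using $(\tan(\theta/2))' = 1/(1 + \cos \theta)$, simplifies to
\[
\pi \phi'(\theta) = -\tan(\theta/2) - \frac{(\pi - \theta) \cos \theta}{1 + \cos \theta} \ ,
\]
which is manifestly negative for $\theta \in (0, \pi/2)$ since both summands are negative there.

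The main obstacle is the lossy passage $\lceil y \rceil \geq y$: this discards up to $1$ per fiber $t$, so the aggregate loss is at most $c_0 < \pi$, which must be comfortably absorbed into the constant $27/16$. The fact that $\phi(0) = \pi/2 < 27/16$, with slack $\approx 0.12$, is precisely what makes this absorption possible. The leading constant $2/(\pi x)$ emerges cleanly from $\int_0^{c_0} \sin t\, dt = 1 - \cos c_0 \to 2$ as $c_0 \to \pi$, divided by the factor $x\pi$; the leftover error from the initial lump is then exactly $\phi(\theta)$.
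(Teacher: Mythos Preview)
Your proof is correct and takes a genuinely different route from the paper. The paper works interval-by-interval: on $I_j = ((j-1)\pi, j\pi]$ it uses the crude bound $s \le j\pi$ to obtain $\lambda\{s \in I_j : |\sin s/s| > x\} \ge 2\arccos(j\pi x)$, then evaluates the sum $\sum_{j} \arccos(j/(n+\theta))$ via the Euler--Maclaurin formula, with a somewhat delicate treatment of the remainder term and a concavity argument in the fractional part $\theta$. You instead unfold to a single period via $t = s - n\pi$, swap sum and integral, and replace the integer count $\lceil y(t)\rceil$ by the trivial lower bound $y(t)$; this sidesteps Euler--Maclaurin entirely and reduces the problem to checking that the explicit function $\phi(\theta)$ stays below $27/16$, which follows from a one-line derivative computation. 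Your argument is shorter and in fact yields the sharper constant $\pi/2$ in place of $27/16$ (since $\phi(0)=\pi/2$ and $\phi$ is decreasing). The paper's decomposition, on the other hand, transfers more directly to the perturbed sinc-type functions $\Phi_p$ needed later in the proof of Theorem~\ref{th3}, where the amplitude varies across periods and the argument carries a phase shift $\alpha_p(s)$; your fiber-counting identity $F_{sinc}(x)=\int_0^\pi N(t)\,dt$ relies on exact periodicity and would require modification in that setting.
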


\begin{proof}
Let $0 < x < \frac 1 {2 \pi}$, define $n \in \N$ and $0 \le \theta < 1$ by $\frac 1 {\pi x} = n + \theta$. Let $I_j := ( (j-1) \pi, j \pi]$ for $j= 1 \etc n$. For $s \in I_j$, $|\frac{\sin(s)} s| \ge |\frac{\sin(s)} {j \pi}|$ and hence with $j \pi x = \frac j {n + \theta}$
\begin{align*}
\lambda( \{ s \in I_j \ \big| \ |\frac{\sin(s)} s| > x \} ) &  \ge \lambda( \{ s \in I_j \ \big| \ |\sin(s)| > \frac j {n + \theta} \} ) \\
& = \pi - 2 \arcsin(\frac j {n+ \theta}) = 2 \arccos(\frac j {n + \theta})
\end{align*}
for $2 \le j \le n$ and $\lambda( \{ s \in I_1 \ | \ |\sin(s)| > \frac j {n + \theta} \} ) = \pi - \arcsin(\frac 1 {n + \theta}) = \arcsin(\frac 1 {n + \theta}) + 2 \arccos(\frac 1 {n+ \theta})$. Therefore
\begin{align*}
F_{sinc}(\frac 1 {n \pi}) & \ge \arcsin(\frac 1 {n + \theta}) + 2 \sum_{j=1}^n \arccos(\frac j {n + \theta}) \\
& = \arcsin(\frac 1 {n + \theta}) + 2 \sum_{j=0}^n \arccos(\frac j {n + \theta}) - \pi \ .
\end{align*}
Let $h(y) := \arccos(\frac y {n + \theta})$. By the Euler - Maclaurin summation formula
$$\sum_{j=0}^n h(j) = \int_0^n h(y) \ dy + \frac{h(0) + h(n)} 2 + \int_0^n (y - [y] - \frac 1 2) \ h'(y) \ dy \ . $$
We have $h'(y) = - \frac 1 {\sqrt{(n + \theta)^2 -y^2}} =: - k(y)$. Then $-\int_0^{\frac 1 2} (y - \frac 1 2) \  k(y) \ dy \ge 0$, and substituting $z = 2 j - y$ for $j = 1 \etc n$, we find
\begin{align*}
-\int_{j-\frac 1 2}^j & (y-(j-\frac 1 2)) \ k(y) \ dy - \int_j^{j+\frac 1 2} (y-(j+\frac 1 2)) \ k(y) \ dy \\
& = -\int_{j-\frac 1 2}^j (y-(j-\frac 1 2)) \ k(y) \ dy + \int_{j-\frac 1 2}^j (z-(j-\frac 1 2)) \ k(2 j - z) \ dz \ge 0 \ ,
\end{align*}
since $k$ is increasing in $y$. Therefore
$$-\int_0^n (y - [y] - \frac 1 2) \ k(y) \ dy \ge - \int_{n-\frac 1 2}^n (y - n + \frac 1 2) \ k(y) \ dy \ . $$
Further, substituting $y = (n + \theta) z$,
$$\int_0^n h(y) \ dy = (n + \theta) \int_0^{\frac n {n+\theta}} \arccos(y) \ dy = (n+ \theta) - (n+\theta) \int_{\frac n {n+\theta}}^1 \arccos(y) \ dy \ . $$
With $h(0) = \frac \pi 2$, $h(n) = \arccos(\frac n {n+ \theta})$ and $\arcsin(z) \ge z$ we find
\begin{align*}
F_{sinc}&(x)  \ge 2 (n+ \theta) - \frac \pi 2  + \frac 1 {n + \theta} - 2 (n+\theta) \int_{\frac n {n+\theta}}^1 \arccos(y) \ dy \\
& + \arccos(\frac n {n+ \theta}) - 2 \int_{n-\frac 1 2}^n \frac{y - n + \frac 1 2} {\sqrt{(n + \theta)^2 -y^2}} \  \ dy \\
& = 2 (n+ \theta) - \frac \pi 2  + \frac 1 {n + \theta} + 2 \arccos(\frac n {n+ \theta}) \\
& + (2n-1) \arccos(\frac{n-\frac 1 2}{n + \theta}) - 2 \sqrt{(\frac 1 2 + \theta)(2 n + \theta - \frac 1 2)} =: 2 (n+ \theta) - \frac \pi 2 + \phi_n(\theta) \ .
\end{align*}
For each $n \in \N$, the function $\phi_n$ is concave in $\theta$, since calculation shows
$$\phi_n''(\theta) = - \frac 1 {(n+\theta)^2} \left(\frac{2n (n^2+4n \theta + 2 \theta^2)}{(\theta (2n + \theta))^{\frac 3 2}} + \frac{(n-\frac 1 2)^2}{\sqrt{(\frac 1 2 + \theta)(2n + \theta - \frac 1 2)}} - \frac 1 {n+\theta}\right) < 0 \ . $$
Thus for any $n \in \N$, the minimum of $\phi_n$ is attained for either $\theta = 0$ or $\theta =1$. For $\theta =0$, we have
$\phi_n(0) = \frac 1 n + (2n-1) \arccos(1 - \frac 1 {2n}) - 2 \sqrt{n - \frac 1 4}$,
which has the derivative $\frac d {dn} \phi_n(0) = - \frac 1 {n^2} + 2 \arccos(1- \frac 1 {2n}) - 2 \frac{\sqrt{n - \frac 1 4}} n$. By the series development of $\arccos$, we have $2 \arccos(1-\frac 1 {2n}) \ge \frac 2 {\sqrt n} (1 + \frac 1 {24 n})$. Using this, one checks that $\frac d {dn} \phi_n(0) > 0$ for $n \ge 9$. Therefore $\phi_n(0) \ge \phi_9(0) > - 0.112$ for $n \ge 9$ and also for $n= 2 \etc 8$ by direct verification. A similar investigation shows that $\phi_n(1)$ attains larger (negative) values that $\phi_n(0)$. Since $-\frac \pi 2 - 0.112 > -\frac {27}{16}$, we conclude that
$F_{sinc}(x) \ge 2 (n + \theta) - \frac {27}{16} = \frac 2 \pi \frac 1 x - \frac {27}{16}$.
\end{proof}

\vspace{0,5cm}

For the investigation of the distribution function of $|\gamma_p|$ we need some information on the first $|\frac{\sin(s)} s|$-type bumps of the graph of $|\gamma_p|$ which we state in the next two Lemmas.

\begin{lemma}\label{lem6}
Let $p \ge 15$. Then $\gamma_p(s) > 0$ for all $0 \le s \le 3.11$ and $\gamma_p(s) < 0$ for $3.4 \le s \le 5.8$. If $\gamma_p(s) < - \frac 1 {100}$, then $s > \pi$. Further, $\gamma_p$ is strictly decreasing in $[\frac 1 4,4]$. For $s \in [0,\pi]$ we have: if $\gamma_p(s) < \frac 4 {17}$, $s \ge 2.48$, if $\gamma_p(s) < \frac 1 8$, $s \ge 2.75$, if $\gamma_p(s) < \frac 1 {10}$, $s > 2.8$ and if $\gamma_p(s) < \frac 1 {20}$, $s \ge 2.966$.
\end{lemma}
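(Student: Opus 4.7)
Fix $p \ge 15$. The structural input is Lemma \ref{lem5}(b): $\gamma_p$ is strictly convex on $[2.5,5.5]$, strictly increasing on $[5.5,7]$, positive at $s=2.5,7$ and negative at $s=5.5$. Each assertion will be reduced to a pointwise estimate at a few explicit values of $s$ via this qualitative information, and the pointwise estimates will be supplied by Lemmas \ref{lem3}--\ref{lem4} or Corollary \ref{cor2}.

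\emph{Strict decrease on $[\tfrac 14, 4]$.} Let $k(s) := (s\cos s - \sin s)/s^2$. A short check shows $k(s)\le k(\tfrac 14)\simeq -0.083$ on $[\tfrac 14, 2.5]$ and $k(4)\simeq -0.116$. Since Lemma \ref{lem5}(a) bounds $|\Gamma(1+1/p)\gamma_p'(s)-k(s)|$ by $1.016/p\le 0.068$, we obtain $\gamma_p'<0$ on $[\tfrac 14,2.5]$ and $\gamma_p'(4)<0$; convexity from Lemma \ref{lem5}(b) makes $\gamma_p'$ increasing on $[2.5,4]$, so $\gamma_p'<0$ throughout $[\tfrac 14,4]$.

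\emph{Sign statements and the refined bound.} For positivity on $[0,3.11]$, combine the monotonicity just proved with the base-point bound $\gamma_p(3.11)>0$ (Lemma \ref{lem4} combined with Corollary \ref{cor2}: $\Psi_{15}(3.11)>0$ and $\Psi_p$ is increasing in $p$), and with $\gamma_p>0$ on $[0,\tfrac 14]$ (Lemma \ref{lem3}, since $\sin(\tfrac 14)/\tfrac 14 \simeq 0.989$ dwarfs $1.016/p$). For $\gamma_p<0$ on $[3.4,5.8]$, convexity on $[2.5,5.5]$ reduces the claim to $\gamma_p(3.4)<0$ and $\gamma_p(5.5)<0$, and monotonicity on $[5.5,7]$ reduces it to $\gamma_p(5.8)<0$; the upper side of Lemma \ref{lem4} makes both $\Phi_p(3.4)$ and $\Phi_p(5.8)$ comfortably negative at $p=15$, and the pointwise values approach $\sin(3.4)/3.4<0$ and $\sin(5.8)/5.8<0$ as $p\to\infty$. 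For $\gamma_p(s)\ge -1/100$ on $[0,\pi]$, note that $\alpha_p(s)\in(0,s)$ and $s-\alpha_p(s)\in(0,\pi)$ force $\Phi_p(s)\ge 0$, so Lemma \ref{lem4} gives $\gamma_p(s)\ge -1/(Np\,\Gamma(1+1/p))>-1/100$.

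\emph{The four threshold estimates.} By the monotonicity on $[\tfrac 14, 4]$ and Lemma \ref{lem3} on $[0,\tfrac 14]$, each implication reduces to verifying the single pointwise bound $\gamma_p(s_0)\ge c_0$ for $(s_0,c_0)\in\{(2.48,\tfrac{4}{17}),(2.75,\tfrac 18),(2.8,\tfrac{1}{10}),(2.966,\tfrac{1}{20})\}$. Each $s_0$ lies in $[\pi/2,\pi]$ where $\sin$ is decreasing, so the minimum in the definition of $\Psi_p$ in Corollary \ref{cor2} equals $|\sin s_0|$ and is independent of $p$; hence $\Psi_p(s_0)\ge \Psi_{15}(s_0)$ uniformly in $p\ge 15$, and it suffices to numerically evaluate $\Psi_{15}$ at the four points. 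The main obstacle is the last case $s_0=2.966$: direct computation gives $\Psi_{15}(2.966)\simeq 0.0499$, just below $1/20$. To close this gap one must keep the denominator $\Gamma(1+1/p)$ in the estimate $\gamma_p(s)\ge \Psi_p(s)/\Gamma(1+1/p)$: at $p=15$, $\Gamma(16/15)\simeq 0.962$ promotes the bound to $0.0499/0.962>1/20$, while for $p>15$ both the coefficient $\tfrac{1}{19}\sqrt{221+140\cos(2s/p)}$ and the factor $\sin(s/p)/(s/p)$ increase towards $1$ while $1/(Np)$ shrinks, keeping the bound safely above $1/20$.
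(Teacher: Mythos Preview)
Your strategy matches the paper's: reduce everything to pointwise estimates at explicit $s$-values using Lemmas \ref{lem3}--\ref{lem5} and the convexity/monotonicity from Lemma \ref{lem5}(b). Two places, however, are not fully closed for \emph{all} $p\ge 15$.

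\textbf{The negativity on $[3.4,5.8]$.} You verify $\Phi_p(3.4)<0$ and $\Phi_p(5.8)<0$ at $p=15$ and in the limit $p\to\infty$, but $\Phi_p$ is not obviously monotone in $p$ in a sign-changing region, so this does not cover intermediate $p$. The paper sidesteps this by using Lemma \ref{lem3} instead: $\Gamma(1+\tfrac1p)\gamma_p(s)\le \frac{\sin s}{s}+\frac{1.016}{p}<\frac{\sin s}{s}+\frac{1}{14}$, and $\frac{\sin s}{s}<-\frac{1}{14}$ for every $s\in[3.4,5.8]$. This is uniform in $p$ with no endpoint checking needed.

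\textbf{The threshold $(2.966,\tfrac{1}{20})$.} Your own computation gives $\Psi_{15}(2.966)\simeq 0.0499<\tfrac{1}{20}$, and you rescue it at $p=15$ by dividing by $\Gamma(\tfrac{16}{15})$. But for $p>15$ you then argue only that $\Psi_p(2.966)$ increases; since $\Gamma(1+\tfrac1p)$ also increases toward $1$, the quotient $\Psi_p(2.966)/\Gamma(1+\tfrac1p)$ is not shown to stay above $\tfrac{1}{20}$. (It does, but an extra monotonicity argument is needed.) The paper avoids this by building a $p$-free minorant: once $\gamma_p(s)>0$ is known, $\gamma_p(s)\ge \Gamma(1+\tfrac1p)\gamma_p(s)\ge \Phi_p(s)-\tfrac{1}{Np}$; then from the $\tfrac{14}{19}+\tfrac{5}{19}=1$ form of $\Phi_p$ and $\sin((1-\tfrac2p)s)\ge\sin s$ for $\tfrac\pi2\le(1-\tfrac2p)s\le s\le\pi$ one gets
\[
\gamma_p(s)\ \ge\ \frac{\sin s}{s}\,\frac{\sin(s/15)}{s/15}-\frac{1}{120}\ =:\ h(s),
\]
valid for all $p\ge 15$. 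Then $h(2.966)\simeq 0.0502>\tfrac{1}{20}$ (and similarly at the other three thresholds), closing the gap in one stroke.

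Both issues are minor and easily repaired along these lines; the rest of your argument is correct and essentially the paper's.
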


\begin{proof}
Let $p \ge 15$. Lemma \ref{lem4} yields with $\frac{14}{19} + \frac 5 {19} =1$ that
$$\gamma_p(s) \ge \Gamma(1+ \frac 1 p) \gamma_p(s) \ge \Phi_p(s) - \frac 1 {Np} \ge \frac{\min\left( \sin(s), \sin((1-\frac2 p)s) \right)} s \frac{\sin(\frac s p)}{\frac s p} - \frac 1 {Np} \ . $$
If $(1-\frac 2 p)s < \frac{\pi} 2$, we have $\frac{\sin((1-\frac 2 p)s)} s \ge (1-\frac 2 p) \frac 2 \pi \ge \frac {13}{15} \frac 2 \pi$, since $\frac{\sin(x)} x$ is decreasing in $[0, \pi]$, and $s \le \frac{15}{13} \frac\pi 2$ implies $\frac{\sin(s)} s \ge \frac{13}{15} \frac 2 \pi \sin(\frac{15}{13}\frac \pi 2)$. This yields $\Phi_p(s) \ge \frac{13}{15} \frac \pi 2 \sin(\frac{15}{13}\frac \pi 2) \frac{\sin(\frac \pi {26})}{\frac \pi {26}} > 0.53$, hence  $\gamma_p(s) \ge 0.53 - \frac 1 {120} > \frac 1 2$.
If $\frac{\pi} 2 \le (1-\frac 2 p) s \le s \le \pi$, $\sin((1-\frac 2 p)s) \ge \sin(s)$ and
$$\gamma_p(s) \ge \Phi_p(s) - \frac 1 {Np} \ge \frac{\sin(s)} s \frac{\sin(\frac s {15})}{\frac s {15}} - \frac 1 {120} =: h(s) \ . $$
The function $h$ is decreasing with $\gamma_p(2.48) \ge h(2.48) > \frac 4 {17}$, $\gamma_p(2.75) \ge h(2.75) > \frac 1 8$, $\gamma_p(2.8) \ge h(2.8) > \frac 1 {10}$, $\gamma_p(2.966) \ge h(2.966) > \frac 1 {20}$ and $\gamma_p(3.11) \ge h(3.11) > \frac 1 {600} > 0$. \\

By Lemma \ref{lem3}, $\Gamma(1+\frac 1 p) \gamma_p(s) \le  \frac{\sin(s)} s + \frac{1.016} p < \frac{\sin(s)} s + \frac 1 {14}$. Since $\frac{\sin(s)} s < - \frac 1 {14}$ for all $s \in [3.4,5.8]$, we have $\gamma_p(s) < 0$ there. Further, Lemma \ref{lem5} implies for $s \in [\frac 1 4, 4]$
$$\Gamma(1+\frac 1 p) \gamma_p'(s) \le \frac{s \cos(s) - \sin(s)}{s^2} + \frac 1 {14} < - \frac 1 {100} < 0 \ , $$
as a standard analysis of $k(s) = \frac{s \cos(s) - \sin(s)}{s^2}$ shows. Therefore $\gamma_p$ is strictly decreasing in $[\frac 1 4,4]$. (It is also decreasing in $[0,\frac 1 4]$ which we do not require here.) For $p \ge 15$, $\Gamma(1+\frac 1 p) \ge \frac{25}{26}$. If $\gamma_p(s) < - \frac 1 {100}$, Lemma \ref{lem4} yields that $\Phi_p(s) \le \Gamma(1+\frac 1 p) \gamma_p(s) + \frac 1 {120} < - \frac {25}{26} \frac 1 {100} + \frac 1 {120} < 0$. Since for $s \in[3,2 \pi]$ we have $\sin(\frac s p) > 0$ and $\alpha_p(s) >0$, we conclude from $\Phi_p(s) < 0$ that $\sin(s - \alpha_p(s)) < 0$ and hence $s > s - \alpha_p(s) > \pi$.
\end{proof}

\begin{lemma}\label{lem7}
Let $p \ge 15$, $x_1(p) := \max_{s \ge 3} |\gamma_p(s)|$, $x_2(p) := \max_{2 \pi \le s \le 3 \pi} |\gamma_p(s)|$. Then:
(a) $x_1(p) \in [0.1973,0.2336]$ and $x_2(p) \in [0.1011,0.1416]$. If $p \ge p_0 \simeq 26.265$, $x_1(p) \in [0.2010,0.2267]$ and $x_2(p) \in [0.1113,0.1360]$. \\
(b) Let $0 < x < x_1(p)$. Then $\gamma_p(s) = x$ has exactly one solution $0 < s_1(p) < 3.4$ and $\gamma_p(s) > x$ for all $0 < s < s_1(p)$. The equation $\gamma_p(s) = -x$ has exactly two solutions in $[3.11, 7]$, $3.11 < s_2(p) < s_3(p) < 7$ such that $|\gamma_p(s)| > x$ for all $s_2(p) < s < s_3(p)$.
If $\frac 1 8 \le x$, $s_2(p) \ge 3.55$, if $\frac 1 {10} \le x \le \frac 1 8$, $s_2(p) \ge 3.45$ and $s_3(p) \ge 5.39$ and if
$\frac 1 {20} \le s \le \frac 1{10}$, $s_2(p) \ge 3.27$ and $s_3(p) \ge 5.57$. If $0 < x \le \frac 1 {20}$, $s_3(p) \ge 5.89$ and $s_3(p)-s_2(p) \ge 2.426$. \\
Further, for $\frac 1 {100} < x \le \frac 1 {20}$, $\gamma_p(s) = x$ has exactly two solutions $s_4(p) < s_5(p)$ in $[5.5, 10]$ such that $|\gamma_p(s)| >x$ for all $s_4(p) < s < s_5(p)$. We have $s_5(p)-s_4(p) \ge 1.763$.
\end{lemma}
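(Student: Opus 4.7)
The plan is to combine the quantitative approximation $\Phi_p$ from Lemma \ref{lem4} with the qualitative monotonicity, convexity, and sign information from Lemmas \ref{lem5}(b) and \ref{lem6} to track the shape of $\gamma_p$ on $[0,10]$. The picture those lemmas furnish is: $\gamma_p$ is strictly decreasing on $[1/4,4]$ with $\gamma_p>0$ on $[0,3.11]$; strictly convex on $[2.5,5.5]$ with $\gamma_p(5.5)<0$; strictly increasing on $[5.5,7]$ with $\gamma_p(7)>0$; strictly concave on $[7,8.5]$; and strictly decreasing on $[8.5,10]$ with $\gamma_p(10)<0$. All structural claims in (b) will follow directly from this shape, while the quantitative thresholds reduce to pointwise evaluations of $\Phi_p$.

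For the upper bounds in (a), I would apply Corollary \ref{cor3} to $I_1=[\pi,2\pi]$ and $I_2=[2\pi,3\pi]$ to obtain $x_1(p)\le 0.2336$ and $x_2(p)\le 0.1416$ for $p\ge 15$, with the sharper bounds following from the second part of that corollary when $p\ge p_0$. On the initial stretch $[3,\pi]$ the monotonicity in Lemma \ref{lem6} gives $|\gamma_p|\le\gamma_p(3)$, well below $x_1(p)$; on the tail $s\ge 3\pi$, Corollary \ref{cor3} controls the range $s\le p$, and Proposition \ref{prop4} controls $s>p$. For the lower bounds, I would evaluate $\gamma_p$ at suitable test points close to the first local extrema of $\sin(s)/s$, namely $s^{*}\simeq 4.49$ and $s^{**}\simeq 7.72$, using the estimate $|\gamma_p(s)|\ge (|\Phi_p(s)|-1/(Np))/\Gamma(1+1/p)$ and the $p$-monotonicity of $\Psi_p$ from Corollary \ref{cor2} to reduce the verification to the worst case $p=15$ or $p=p_0$.

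For part (b), uniqueness of the positive crossing $s_1(p)\in(0,3.4)$ is immediate from the strict monotonicity of $\gamma_p$ on $[1/4,4]$, $\gamma_p(0)=1$, and positivity on $[0,3.11]$. For the two negative crossings in $[3.11,7]$, strict convexity on $[2.5,5.5]$ combined with monotonicity on $[3.11,4]$ and $[5.5,7]$ gives a unique local minimum $s_{\min}\in[4,5.5]$, so $\gamma_p$ is strictly decreasing on $[3.11,s_{\min}]$ and strictly increasing on $[s_{\min},7]$; hence $\gamma_p(s)=-x$ has exactly two solutions $s_2(p)<s_3(p)$ whenever $-x>\min\gamma_p$, which is guaranteed by $x<x_1(p)$. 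The analogous argument with concavity on $[7,8.5]$ and monotonicity on $[5.5,7]$ and $[8.5,10]$ yields exactly two solutions $s_4(p)<s_5(p)$ of $\gamma_p(s)=x$ in $[5.5,10]$ for $1/100<x\le 1/20$, after a quick check that $1/20<\max_{[7,8.5]}\gamma_p$ via Lemma \ref{lem4}.

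The quantitative threshold claims $s_2(p)\ge 3.55,3.45,3.27$, $s_3(p)\ge 5.39,5.57,5.89$, and $s_3(p)-s_2(p)\ge 2.426$ reduce, by the monotonicities just described, to one-sided inequalities of the form $\gamma_p(s_0)>-x_0$ or $\gamma_p(s_0)<-x_0$ at the finitely many cutoff points $s_0$ listed in the statement. Each is proved by computing $\Phi_p(s_0)$ and absorbing the error $1/(Np)$ from Lemma \ref{lem4}, then invoking the $p$-monotonicity of $\Psi_p$ from Corollary \ref{cor2} to reduce to $p=15$. The main obstacle will be the tightness of these numerical estimates and the need, at each $s_0$, to verify the hypothesis $s_0-\alpha_p(s_0)\ge[s_0/\pi]\pi$ of Corollary \ref{cor2}; where it fails (on short subintervals near $\pi$ or $2\pi$), a separate direct evaluation using Lemma \ref{lem3}, or a check at an additional intermediate value of $p$, will be required.
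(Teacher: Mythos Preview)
Your plan is essentially the paper's own proof: Corollary \ref{cor3} for the upper bounds in (a), test points near $4.5$ and $7.8$ together with the $p$-monotonicity of $\Psi_p$ from Corollary \ref{cor2} for the lower bounds, and the shape information from Lemmas \ref{lem5}(b) and \ref{lem6} to reduce (b) to finitely many pointwise checks via Lemma \ref{lem4}. Two small corrections: for the tail $s>2\pi$ in (a) you should not invoke Proposition \ref{prop4} (it only kicks in for $s\gtrsim p^3$, leaving a gap), but simply use $|\Phi_p(s)|\le 1/s$ from Lemma \ref{lem4}, which gives $|\gamma_p(s)|<1/5$ for all $s>2\pi$; and for the $s_2(p)$ thresholds you need \emph{upper} bounds on $|\gamma_p|$ (so the direct Lemma \ref{lem4} estimate $|\gamma_p(s)|\le(|\sin(s)|/s+1/(Np))/\Gamma(1+1/p)$, maximized at $p=15$), not the $\Psi_p$ lower bound---the $\Psi_p$ monotonicity is what you use for the $s_3(p)$ thresholds.
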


\begin{proof}
(a) By Corollary \ref{cor2}
\begin{align*}
\Gamma(1+ & \frac 1 p) |\gamma_p(s)|  \ge \Psi_p(s) \\
& = \frac 1 {19} \sqrt{221+140 \ \cos(2 \frac s p)} \ \frac{\min( |\sin(s-\alpha_p(s))|, |\sin(s)| )} s \ \frac{\sin(\frac s p)}{\frac s p} - \frac 1 {N p} \ ,
\end{align*}
and $\Psi_p(s)$ is increasing in $p$ for all $s \in [3.255,2 \pi]\cup [6.501,3 \pi]$. Hence $y_1(p) := \max_{s \in[3.255,2 \pi]} \Psi_p(s)$ is increasing in $p$, too. For $t_1 = 4.63$, $\Psi_{15}(t_1) \ge 0.19056$ and for $\tilde{t_1} = 4.58$, $\Psi_{p_0}(\tilde{t_1}) \ge 0.2010$. Therefore for any $15 \le p \le p_0$,
$\max_{s \ge 3} |\gamma_p(s)| \ge \frac {0.19056} {\Gamma(1+\frac 1 {p_0})} \ge 0.1973$ and for $p \ge p_0$, $\max_{s \ge 3} |\gamma_p(s)| \ge 0.2010$. The upper estimates $0.2336$ for $15 \le p \le p_0$ and $0.2267$ for $p \ge p_0$ were shown in Corollary \ref{cor3}, at least for $s \in [3,2 \pi]$. Note that for $s > 2 \pi$, $|\gamma_p(s)| < \frac 1 5 < 0.2267$, which is implied by Lemma \ref{lem4} and $\Gamma(1+\frac 1 p) |\gamma_p(s)| < \frac 1 s + \frac 1 {Np}$. \\
Similarly for $s \in I_2$, we choose $t_2 = 7.94$ with $\Psi_{15}(t_2) \ge 0.09767$ with $x_2(p) \ge \frac{0.09767}{\Gamma(1+\frac 1 {15})} \ge 0.1011$ and $\tilde{t_2} = 7.87$ with $\Psi_{p_0}(\tilde{t_2}) \ge 0.1113$. The upper estimates again follow from Corollary \ref{cor3}. \\

(b) Let $0 < x < x_1(p)$. By (a) $x_1(p)< 0.2336 < \frac 4 {17}$, and Lemma \ref{lem6} shows that $\gamma_p(s) = x$ requires $s > 2.48$. By Lemma \ref{lem6}, too, $\gamma_p(s)$ is strictly decreasing for $s$ in $[\frac 1 4 , 4]$ with $\gamma_p(3.4) < 0$.
Therefore $\gamma_p(s) = x$ has a unique solution $0 < s_1(p) < 3.4$ and $\gamma_p(s) > x $ holds for all $0 \le s < s_1(p)$. Lemma \ref{lem5} (b) implies that $\gamma_p(s) = -x$ has exactly two solutions $s_2(p) < s_3(p)$ in $[3.11,7]$, since $\gamma_p(3.11)>0$, $\gamma_p(5.5)<0$, $\gamma_p(7)>0$ and $\gamma_p$ is strictly convex in $[3.11,5.5]$ and strictly increasing in $[5.5,7]$. For $s_2(p) < s < s_3(p)$ we have $|\gamma_p(s)| > x$. By Corollary \ref{cor2} for $p \ge 15$ we have $|\gamma_p(s)| \ge \Psi_p(s) \ge \Psi_{15}(s)$ for all $s \in [3.255,2 \pi]$. Assume that $0 < x \le \frac 1 8$. Since $\Psi_{15}(5.38) \ge 0.127 > \frac 1 8$ and $\Psi_{15}$ is decreasing in $[5,6]$ because $|\sin(s)|$ is decreasing there, the larger solution satisfies $s_3(p) > 5.38$. Similarly, for $0 < x \le \frac 1 {10}$, with $\Psi_{15}(5.57) > \frac 1 {10}$, we have $s_3(p) > 5.57$. If $0 < x \le \frac 1 {20}$, we use $\Psi(5.89) > \frac 1 {20}$, and then $s_3(p) > 5.89$. \\
Concerning upper estimates for $|\gamma_p(s)|$ in $[3.255,4.4]$, where both $|\sin(s)|$ and $|\sin(s-\alpha_p(s))|$ are increasing, we find from Lemma \ref{lem4} for
$s \in [3.255,4.4]$
$$|\gamma_p(s)| \le \frac 1 {\Gamma(1+\frac 1 p)} \left( \left| \frac{\sin(s)} s \right| + \frac 1 {Np} \right) \le \frac 1 {\Gamma(1+\frac 1 {15})} \left( \left| \frac{\sin(s)} s \right| + \frac 1 {120} \right) =: h(s)  \ . $$
If $x \ge \frac 1 8$ and $\gamma_p(s) = -x$, $\frac 1 8 \le |\gamma_p(s)| \le h(s)$. Since $h(3.55) < \frac 1 8$ and $h$ is increasing in $[\pi,4.4]$, the smaller solution $s_2(p)$ satisfies $s_2(p) > 3.55$. If $x \ge \frac 1 {10}$, we use that $h(3.45) < \frac 1 {10}$, so that $s_2(p) > 3.45$. If $x \ge \frac 1 {20}$, use $h(3.27) < \frac 1 {20}$, hence $s_2(p) > 3.27$. For $x = \frac 1 {20}$, the equation $\Psi_{15}(s) = x$ has the two solutions $\bar{s_2} \simeq 3.472$ and $\bar{s_3} \simeq 5.898$ with $\bar{s_3}-\bar{s_2} > 2.426$. The two solutions $s_2(p) < s_3(p)$ of $\gamma_p(x) = -x$ are farther apart than $\bar{s_2}$ and $\bar{s_3}$, since $x = |\gamma_p(\bar{s_2})| \ge \Psi_p(\bar{s_2}) \ge \Psi_{15}(\bar{s_2})$ and $|\gamma_p(s)|$ is increasing near $\bar{s_2}$, so that $\bar{s_2} > s_2(p)$. Similarly $x = |\gamma_p(\bar{s_3})| \ge \Psi_{15}(\bar{s_3})$, where $|\gamma_p(s)|$ is decreasing near $\bar{s_3}$, so that $\bar{s_3} < s_3(p)$ and
$s_3(p) - s_2(p) \ge \bar{s_3} - \bar{s_2} > 2.426$. If $x < \frac 1 {20}$, the solutions of $\gamma_p(s) = -x$ are even farther apart than $2.426$. \\
On the interval $[5.5,10]$ the argument is similar: By Lemma \ref{lem5} $\gamma_p$ is strictly increasing in $[5.5,7]$ with $\gamma_p(5.5)<0<\gamma_p(7)$, strictly concave in $[7,8.5]$ and strictly decreasing in $[8.5,10]$ with $\gamma_p(10)<0$. By part (a), $\max_{s \in [5.5,10]} |\gamma_p(s)| > \frac 1 {10}$, so that for $0 < x \le \frac 1 {10}$, $\gamma_p(s) = x$ has exactly two solutions $s_4(p) < s_5(p)$ in $[5.5,10]$. For $s_4(p) < s < s_5(p)$ we have
$|\gamma_p(s)| > x$. For $x = \frac 1 {20}$, $\Psi_{15}(s) =x$ has the two solutions $\bar{s_4} \simeq 6.994$ and $\bar{s_5} \simeq 8.757$. Similarly as above,
$s_5(p) - s_4(p) \ge \bar{s_5} - \bar{s_4} \ge 1.763$.
\end{proof}

\vspace{0,5cm}

{\bf Proof of Theorem \ref{th3}} \\

(i) We will apply Proposition \ref{prop2}, the distribution function result of Nazarov, Podkorytov \cite{NP} to $\Omega = [0,\infty)$, $\mu= \lambda$ Lebesgue measure. \\
We start with case a), when $d_p \le c_p$, i.e. $26.265 \simeq p_0 \le p < \infty$ and put $f(s) = |\gamma_p(s)|$ and $g(s) = \exp(-d_p s^2)$. Then
$\int_0^\infty f(s)^2 \ ds = \int_0^\infty g(s)^2 \ ds = \frac 1 {2 \sqrt 2} \sqrt{\frac \pi {d_p}}$, see \eqref{eq2.5}. In terms of the distribution functions $F$ and $G$ of $f$ and $g$, we have $0 = \int_0^\infty (f(s)^2 - g(s)^2) \ ds = \int_0^1 2 x \ (F(x)-G(x)) \ dx$, which implies that $F=G$ has {\it at least} one solution in $(0,1)$. We have to show that $F-G$ changes sign {\it only once} in $(0,1)$. By Lemma \ref{lem6} $f$ is positive for all $0 \le s \le 3.11$ and has a zero in $[3.11,3.4]$ (which is actually $ > \pi$). By Proposition \ref{prop3} $f(s) = |\gamma_p(s)| = \gamma_p(s) \le \exp(-c_p s^2) \le \exp(-d_p s^2) = g(s)$ for all $0 \le s \le 3$, with strict inequality for $s>0$. Let again $x_1(p) := \max_{s \ge 3} f(s)$. By Lemma \ref{lem7}, $f(s) = |\gamma_p(s)| \le x_1(p) \le 0.2336$ for all $s \ge 3$ and $x_1(p) \ge 0.1973$. We claim that $f(s) < g(s)$ holds also for all $3 < s \le 3.3$ with $g(3.3) < x_1(p)$. By Lemma \ref{lem2} (a), $0.159 < \frac1 {2 \pi} < d_p \le d_{15} < 0.163$ for all $15 \le p < \infty$, hence
\begin{align*}
0.16 & < \exp(-0.163 \cdot 3.3^2) < \exp(-d_p \ 3.3^2) = g(3.3) \\
&\le \exp(-0.159 \cdot 3.3^2) < 0.18 < x_1(p) \ .
\end{align*}
The function $\Phi_p$ in Lemma \ref{lem4} satisfies
$$|\Phi_p(s)| \le \max \left( \frac{|\sin(s- \alpha_p(s))|} s , \frac{|\sin(s)|} s \right) \ , $$
and for $s \in [3,3.3]$, we have by Remark (1) following Lemma \ref{lem4}, $0 \le \alpha_p(s) \le \frac 5 {19} \frac 2 p 3.3 \le \frac{6.6}{57} < 0.12$, hence
$\max_{s \in [3,3.3]} |\Phi_p(s)| \le \max_{s \in [2.88,3.3]} \frac{|\sin(s)|} s < \frac 1 {11}$. Lemma \ref{lem4} yields for all $s \in [3,3.3]$ with $N$ as given there
\begin{align*}
f(s) & \le \max_{s \in [3,3.3]} |\gamma_p(s)| \le \frac 1 {\Gamma(1 + \frac 1 p)} (\frac 1 {11} + \frac 1 {Np}) \le \frac 1 {\Gamma(1 + \frac 1 {15})} (\frac 1 {11} + \frac 1 {120}) \\
& < 0.11 < 0.16 < \min_{s \in [3,3.3]} \exp(-d_p \ s^2) \le g(s) \ .
\end{align*}
This implies $F(x) < G(x)$ for all $x \in [x_1(p),1)$. \\

To prove that $F-G$ changes sign {\it at most once}, it suffices to show for a suitable $0 < x_2 < x_1(p)$ that
\begin{align}\label{eq4.1}
& (1) \; G-F \text{   is strictly increasing in   } [x_2,x_1(p)] \text{  and  } \nonumber \\
& (2) \; F(x) > G(x) \text{   for all   } x \in (0,x_2] \ .
\end{align}
Since $F$ and $G$ are differentiable decreasing functions, condition (1) is equivalent to $\frac{|F'(x)|}{|G'(x)|} > 1$, $x \in [x_2,x_1(p)]$. This means, see \cite{NP},
\begin{align}\label{eq4.2}
\frac{|F'(x)|}{|G'(x)|} & = \left(\sum_{s>0, |\gamma_p(s)| = x} \frac 1 {|\gamma_p'(s)|} \right) \ \frac 1 {|G'(x)|} \nonumber \\
& = 2 \sqrt{d_p} \sum_{s>0, |\gamma_p(s)| = x} \frac {|\gamma_p(s)|} {|\gamma_p'(s)|} \ \sqrt{\ln \left( \frac 1 {|\gamma_p(s)|} \right)} > 1 \ ,
\end{align}
where we used that $G(x) = \sqrt{\frac 1 {d_p} \ln ( \frac 1 x )}$, $\frac 1 {|G'(x)|} = 2 \sqrt{d_p} \ x \sqrt{\ln( \frac 1 x )}$. Having shown \eqref{eq4.1} and \eqref{eq4.2}, Proposition \ref{prop2} will imply that for all $u \ge 2$
$$\sqrt u \int_0^\infty |\gamma_p(s)|^u \ ds \le \sqrt u \int_0^\infty \exp(-d_p s^2)^u \ ds = \frac 1 2 \sqrt{\frac \pi {d_p}} \ , $$
the claim in Theorem \ref{th3} a). We will later choose $x_2 = \frac 1 {20}$ for all $p_0 \le p < \infty$ and also for $20 \le p \le p_0$. \\

Before proving \eqref{eq4.2} and \eqref{eq4.1}, let us consider case b), $c_p < d_p$, i.e. $20 < p < p_0$. Let $r :=r(p) := \frac{d_p}{c_p}$. It follows from Lemma \ref{lem2} that $1 < r < 1.0273$. Consider $f(s) = |\gamma_p(s)|^r$ and $g(s) = \exp(-d_p s^2)$ in Proposition \ref{prop2}. Then by Proposition \ref{prop3} and Lemma \ref{lem6} we have $f(s) = |\gamma_p(s)|^r \le \exp(-c_p s^2)^r = \exp(-d_p s^2)$ for $0 \le  s \le 3$, with strict inequality for $s>0$, $p \ge 15$. Again by Lemma \ref{lem7}, $x_1(p) := \max_{s \ge 3} |\gamma_p(s)|^r \ge 0.1973^{1.0273} \ge 0.1887$ with $g(3.3) \le 0.18 < x_1(p)$. Using the above estimate for $|\gamma_p(s)|$, we get
$f(s) = |\gamma_p(s)|^r \le |\gamma_p(s)| < g(s)$ for all $s \in [3,3.3]$. This implies $F(x) < G(x)$ for all $x_1(p) \le x < 1$.
Further, using $r>1$ and $\exp(-d_p s^2) \le 1$,
$$\int_0^\infty f(s)^{\frac 2 r} \ ds = \int_0^\infty \gamma_p(s)^2 \ ds = \int_0^\infty \exp(-d_p s^2)^2 \ ds < \int_0^\infty \exp(-d_p s^2)^{\frac 2 r} \ ds \ . $$
Suppose that $p \notin 2 \N$. By Proposition \ref{prop4}, $f(s) \sim \delta_p s^{-r(p+1)}$ for large $s$, so that $f \notin L_{\frac 1 {r(p+1)}}(0,\infty)$. This implies that there is $q$, $\frac 1 {r(p+1)} < q < \frac 2 r$ such that $\int_0^\infty f(s)^q \ ds = \int_0^\infty g(s)^q \ ds$, hence
$0 = \int_0^\infty (f(s)^q - g(s)^q) \ ds = q \int_0^1 x^{q-1} (F(x)-G(x)) \ dx$. Therefore $F=G$ has at least one solution in $(0,1)$. This is also true for $p \in 2 \N$, since then {\it on average} $f$ is larger than $g$ asymptotically, using \eqref{eq3.12}, hence for small $x$, $F(x) > G(x)$ will hold.
If $\tilde{F}$ and $\tilde{G}$ are the distribution functions of $|\gamma_p(s)|$ and $g(s)^{\frac 1 r} = \exp(-c_p s^2)$, we have $F(x) = \tilde{F}(x^{\frac 1 r})$ and $G(x) = \tilde{G}(x^{\frac 1 r})$. Thus it suffices to show that $\tilde{F} - \tilde{G}$ changes sign only once. Similar to \eqref{eq4.1} and \eqref{eq4.2} we will show for a suitable $0 < x_2 < x_1(p)$
\begin{align}\label{eq4.3}
& (1) \; \frac{|\tilde{F}'(x)|}{|\tilde{G}'(x)|} = 2 \sqrt{c_p} \sum_{s>0, |\gamma_p(s)| = x} \frac {|\gamma_p(s)|} {|\gamma_p'(s)|} \ \sqrt{\ln \left( \frac 1 {|\gamma_p(s)|} \right)} > 1 \ , x \in [x_2,x_1(p)] \ , \nonumber \\
& (2) \; \tilde{F}(x) > \tilde{G}(x) \text{   for all   } x \in (0,x_2] \ .
\end{align}
Having shown this, Proposition \ref{prop2} will imply that for all $v \ge u_0 = \frac 2 r$, $\int_0^\infty f^v(s) \ ds \le \int_0^\infty g(s)^v \ ds$, i.e. for all $u = r v \ge 2$
$$\sqrt u \int_0^\infty |\gamma_p(s)|^u \ ds \le \sqrt u \int_0^\infty \exp(-d_p s^2)^v \ ds = \sqrt u \int_0^\infty \exp(-c_p s^2)^u \ ds = \frac 1 2 \sqrt{\frac \pi {c_p}} \ , $$
the claim in Theorem \ref{th3} b). We will again choose $x_2 = \frac 1 {20}$. \\

(ii) We note that $c_p$ and $d_p$, which are important for the estimates in \eqref{eq4.1}, \eqref{eq4.2} and \eqref{eq4.3}, do not vary very much in their respective intervals. We have
$$\inf_{p_0 \le p < \infty} d_p = \frac 1 {2 \pi} \simeq 0.1592 < \max_{p_0 \le p < \infty} d_p \simeq 0.1610 $$
$$\text{   and   } \min_{15 \le p < p_0} c_p \simeq 0.15846 < \max_{15 \le p < p_0} c_p \simeq 0.1610 \ , $$
so that $\min( 2 \sqrt{c_p} , 2 \sqrt{d_p} ) \ge 0.796$ and $\max( \sqrt{\frac 1 {c_p}} , \sqrt{\frac 1 {d_p}} ) \le 2.5122$ in the ranges for $p$ considered here.\\
In both cases a) and b) we have to compare the distribution functions $F$ of $|\gamma_p|$ and $G$ of $\exp(-d_p s^2)$, case a) or $\exp(-c_p s^2)$, case b). It suffices to prove with the choice $x_2 = \frac 1 {20}$ \\
\begin{align*}
& (1) \; h_p(x) := 0.796 \sum_{s >0, |\gamma_p(s)| = x} \frac{|\gamma_p(s)|}{|\gamma_p'(s)|} \sqrt{\ln ( \frac 1 {|\gamma_p(s)|} )} > 1 \ , \ x \in [x_2,x_1(p)] \ , \\
& (2) \ F(x) > 2.5122 \ \sqrt{\ln( \frac 1 x )} \ge G(x) \ , x \in (0,x_2] \ .
\end{align*}
Then the unique sign change of $G-F$ will occur at some $x_0 \in (x_2,x_1(p))$. \\

\vspace{0,3cm}

We now verify (1) for all $p \ge 15$ and $x$ with $x_2 = \frac 1 {20} \le x \le x_1(p)$. We have $h_p(x) = 0.796 \ ( \sum_{s >0, |\gamma_p(s)| = x} \frac 1 {|\gamma_p'(s)|} ) \ x  \sqrt{\ln(\frac 1 x)}$. For $0 < x < \frac 3 5 < \exp(-\frac 1 2)$, $x  \sqrt{\ln(\frac 1 x)}$ is increasing in $x$. Suppose first that $\frac 1 8 \le x \le x_1(p)$. By Lemma \ref{lem5} for all $p \ge 15$
$$|\gamma_p'(s)| \le \frac{1.064}{\Gamma(1+\frac 1 p)} (\frac 1 s + \frac 1 p) \le \frac{1.064}{\Gamma(1+\frac 1 {15})} (\frac 1 s + \frac 1 {15}) \le 1.102 \ (\frac 1 s + \frac 1 {15}) =: l_1(s) \ ,  $$
for $0 < s < \pi$, while
$$|\gamma_p'(s)| \le \frac{1.016}{\Gamma(1+\frac 1 {15})} (\frac 1 s + \frac 1 {15}) \le 1.053 \ (\frac 1 s + \frac 1 {15}) =: l_2(s) $$
for $s \ge \pi$. By Lemmas \ref{lem6} and \ref{lem7}, with $x_1(p) < \frac 4 {17}$ as needed there, $|\gamma_p(s)| = x$ has three solutions in $(0,7)$, $s_1(p) \ge 2.48$, $s_3(p) > s_2(p) \ge 3.55$. Hence
$$h_p(x) \ge 0.796 \ ( \frac 1 {l_1(2.48)} + \frac 2 {l_2(3.55)} ) \frac 1 8 \sqrt{\ln 8} > 1.06 > 1 \ . $$
For $\frac 1 {10} \le x \le \frac 1 8$, the three solutions satisfy $s_1(p) \ge 2.75$, $s_2(p) \ge 3.45$ and $s_3(p) \ge 5.39$, see Lemmas \ref{lem6} and \ref{lem7}. Hence for these $x$
$$h_p(x) \ge 0.796 \ ( \frac 1 {l_1(2.75)} + \frac 1 {l_2(3.45)} + \frac 1 {l_2(5.39)} ) \frac 1 {10} \sqrt{\ln 10} > 1.03 > 1 \ . $$
For $\frac 1 {20} \le x \le \frac 1 {10}$, by Lemma \ref{lem7} (a), $x \le \frac 1 {10} < x_2(p)$, and $|\gamma_p(s)| = x$ has two additional solutions $s_5(p) > s_4(p)$ in $(5.5, 10)$. We have $s_1(p) \ge 2.966$, $s_2(p) \ge 3.27$, $s_3(p) \ge 5.57$, $s_5(p) > s_4(p) \ge s_3(p) \ge 5.57$. Therefore
$$h_p(x) \ge 0.796 \ ( \frac 1 {l_1(2.966)} + \frac 1 {l_2(3.27)} + \frac 3 {l_2(5.57)}) \frac 1 {20} \sqrt{\ln 20} > 1.1 > 1 \ . $$
Hence (1) is satisfied for all $p \ge 15$ and $x$ with $\frac 1 {20} \le x \le x_1(p)$. For $x = \frac 1 {20}$, we have by Lemmas \ref{lem6} and \ref{lem7}, that $|\gamma_p(s)| > x$ holds for all $0 \le x < 2.966$, $s_2(p) < x < s_3(p)$ with $s_3(p) - s_2(p) \ge 2.425$ and for $s_4(p) < x < s_5(p)$ with $s_5(p) - s_4(p) \ge 1.763$. Thus $F(\frac 1 {20}) \ge 7.15 > 4.25 > G(\frac 1 {20})$. Actually $G(x) \le 2.5122 \sqrt{\ln(\frac 1 x)} \le 7.15$ holds for all $\frac 1 {3200} < x < \frac 1 {20}$. Hence $F(x) > G(x)$ is true for all $x$ with $\frac 1 {1000 \pi} < x < \frac 1 {20}$. \\

(iii) Suppose that $p \notin 2 \N$. Let $A > 1$ and $p \ge 15$ be such that $|\sin(\frac \pi 2 p)| \ge \frac 1 {A^p}$, which is satisfied if dist$(p,2 \N) \ge \frac 1 {A^p}$. We prove that $F(x) > G(x)$ holds for {\it very small} $x>0$, namely for $0 < x \le \left(\frac{0.482}{(Ap)^2}\right)^{p+1}$. By Stirling's formula $\Gamma(p+1) \ge \sqrt{2 \pi p} (\frac p e)^p$ and hence
$$\left(\frac 1 2 \frac{\Gamma(p+1)}{\Gamma(1+\frac 1 p)} \right)^{\frac 1 {p+1}} \ge \left(\frac 1 2 \Gamma(p+1) \right)^{\frac 1 {p+1}} \ge \sqrt{\frac{\pi e^2}{2 p}}^{\frac 1 {p+1}} \frac p e \ . $$
The function $k(p) := \sqrt{\frac{\pi e^2}{2 p}}^{\frac 1 {p+1}}$ satisfies $(\ln k)'(p) = \frac 1 {p(p+1)^2} (p \ln (\frac{2p}{\pi e^2}) - 1)$. Since
$\frac 2 {\pi e^2} \simeq \frac 1 {31.5}$, $(\ln k)'(p_1) = 0$ for $p_1$ near $31.5$, namely $p_1 \simeq 32.535$, and $k$ attains its minimum at $p_1$, with $k(p) \ge k(p_1) > 0.9847$. This implies $\left(\frac 1 2 \Gamma(p+1) \right)^{\frac 1 {p+1}} > 0.3622 \ p$. By Proposition \ref{prop4} we have for any $s > \frac 5 8 A p^3$ that
$$|\gamma_p(s)| \ge \frac 1 2 \frac{\Gamma(p+1)}{\Gamma(1+\frac 1 p)} \frac 1 {A^p} \frac 1 {s^{p+1}} \ge \left(\frac{0.3622 \ p} s \right)^{p+1} \frac 1 {A^p} \ge \left(\frac{0.3622 \ p} {A s} \right)^{p+1} \ . $$
Thus for $x > 0$
\begin{align*}
F(x) & = \lambda \{ s > 0 \ \Big| \ |\gamma_p(s)| > x \} \ge \lambda \{ s > \frac 5 8 A p^3 \ \Big| \ \left(\frac{0.3622 \ p} {A s} \right)^{p+1} > x \} \\
& = \lambda \{ s > 0 \ \Big| \ \frac 5 8 A p^3 < s < \frac{0.3622 \ p} {A x^{\frac 1 {p+1}}} \ \} \ .
\end{align*}
Assuming $0 < x < \left(\frac{0.482 p}{(A p)^2}\right)^{p+1}$, $\frac{0.3622 \ p} {A x^{\frac 1 {p+1}}} > \frac 3 4 A p^3 = \frac 6 5 (\frac 5 8 A p^3)$ and hence
$$F(x) \ge \frac{0.3622 \ p} {A x^{\frac 1 {p+1}}} - \frac 5 8 A p^3 \ge \frac 1 6 \frac{0.3622 \ p} {A x^{\frac 1 {p+1}}} \ . $$
For $G$ we know $G(x) < 2.5122 \sqrt{\ln(\frac 1 x)}$. To prove $F(x) > G(x)$, it suffices to verify $x^{\frac 1 {p+1}} \sqrt{\ln(\frac 1 x)} \le \frac 1 {50} \frac p A$, since $\frac{0.3622}{6 \cdot 2.5122} > \frac 1 {50}$. Now $l(x) := x^{\frac 1 {p+1}} \sqrt{\ln(\frac 1 x)}$ is increasing for $0 < x < \exp(-\frac{p+1} 2)$, which is satisfied under our assumption on $x$. Inserting the maximal $x$ considered, \\ $\bar{x} = \left(\frac{0.482}{(Ap)^2}\right)^{p+1}$, we find
$l(\bar{x}) = \frac{0.482}{(A p)^2} \sqrt{(p+1) [ 2 \ln(A p) + \ln(\frac 1 {0.482}) ] }$, which is easily seen to be $< \frac 1 {50}\frac p A$ for any $A \ge 1$ and $p \ge 15$. \\
For the maximal $x$ in this range, with $\ln(\frac 1 {0.482}) \le 0.73$,
\begin{equation}\label{eq4.4}
G(\bar{x}) \le 2.5122 \sqrt{(p+1) [ 2 \ln(A p) + 0.73 ] }  \ .
\end{equation}

(iv) It remains to prove $F(x) > G(x)$ for all $x$ in the intermediate range $\left(\frac{0.482}{(Ap)^2}\right)^{p+1} < x < \frac 1 {1000 \pi}$ for $p \ge 20$.
Assume first that $\frac 1 {210 p} \le x \le \frac 1 {1000 \pi}$. By Lemma \ref{lem3} $|\gamma_p(s)| \ge \frac{|\sin(s)|} s - \frac {1.016} p $. Therefore by Proposition \ref{prop5}
$$F(x) \ge F_{sinc} \left(x + \frac {1.016} p \right) > \frac 2 \pi \frac 1 {x + \frac {1.016} p} - \frac{27}{16} \ .$$
Since $G(x) \le 2.5122 \sqrt{\ln(\frac 1 x)}$, $F(x) > G(x)$ will be satisfied if
$$\left(x+\frac {1.016} p \right) \left(\sqrt{\ln(\frac 1 x)} + \frac{27}{40} \right) < \frac 1 4 \ , $$
using that $\frac{27}{16 \cdot 2.5122} < \frac{27}{40}$ and $\frac 2 {2.5122 \ \pi} > \frac 1 4$. By assumption $\frac 1 p < 210 \ x$, hence $x + \frac{1.016} p < 215 \ x$. It suffices that $215 \ x (\sqrt{\ln(\frac 1 x)} + \frac{27}{40}) < \frac 1 4$ holds. Since $x \sqrt{\ln(\frac 1 x)}$ is increasing for $0 < x < \exp(-\frac 1 2)$, we only have to check this for the maximal $x=\frac 1 {1000 \pi}$, when it is true. \\
If $x < \frac 1 {210 p}$, $x + \frac{1.016} p < \frac{1.021} p$, and by the above estimate $F(x) \ge \frac 2 {1.021 \ \pi} p - \frac{27}{16}$. This will be $> 2.5122 \sqrt{\ln(\frac 1 x)} \ge G(x)$, if $\sqrt{\ln(\frac 1 x)} < 0.2481 p - \frac{27}{40}$. For any $p \ge 20$, $(0.2481 p - \frac{27}{40})^2 \ge \frac{p^2}{25}$. Thus $F(x) > G(x)$ holds for all $x > 0$ with $\exp(-\frac{p^2}{25}) < x < \frac 1 {210 p}$. \\

Assume next that $\bar{x} := \left( \frac{0.482}{(A p)^2} \right)^{p+1} < x < \exp(-\frac{p^2}{25})$. By \eqref{eq4.4}
$$G(x) \le G(\bar{x}) = 2.5122 \sqrt{(p+1) [ 2 \ln(A p) + 0.73 ] } \le 3.5528 \sqrt{(p+1) [\ln(A p) + 0.365]} \ . $$
For $x < \exp(-\frac{p^2}{25})$, $x + \frac{1.016} p \le \frac{1.018} p$ and the estimate for $F(x)$ yields $F(x) \ge \frac 2 {1.018 \pi} p - \frac{27}{16} \ge \frac 5 8 - \frac{27}{16}$. To verify $F(x) > G(x)$, it suffices to show
\begin{equation}\label{eq4.5}
\psi_A(p) := \frac 5 8 p - \frac{27}{16} - 3.5528 \sqrt{(p+1) [\ln(A p) + 0.365]} > 0 \ .
\end{equation}
For any $A \ge 1$, $\psi_A$ is increasing in $p$. For the choice $A=p$, $\psi_A(400) > 2 > 0$ and for $A=10$, $\psi_A(265) > 1 >0$. Together with parts (ii) and (iii) this shows that $F(x) > G(x)$ holds for all $0 < x < x_1(p)$ for $A=p$ if $p \ge 400$ and for $A=10$ if $p \ge 265$. This proves Theorem \ref{th3} for $p \ge 400$. \\

(v) For $20 \le p \le 400$, we use the better approximation of $|\gamma_p(s)|$ of Lemma \ref{lem4}. We may also assume that
$\bar{x} = \left( \frac{0.482}{(A p)^2} \right)^{p+1} < x < \exp(-\frac{p^2}{25})$, since the estimate outside of this interval was valid for all $p \ge 20$.
By Lemma \ref{lem4}, $|\gamma_p(s)| \ge |\Phi_p(s)| - \frac 1 {N p}$, where $N$ is as in Lemma \ref{lem3}, e.g. $N=8.002$ for $26 \le p \le 175$, and
$\Phi_p(s) = k(\frac s p) \frac{\sin(s- \alpha_p(s))} s$, $k(y) = \frac 1 {19} \sqrt{221+140 \cos(2 y)} \frac{\sin(y)} y$. Since for $x < \exp(-\frac{p^2}{25})$ and $26 \le p \le 175$ we have $x + \frac 1 {8.002 p} < \frac 1 {8p}$,
$$F(x) \ge \lambda \{ s > 0 \ | \ |\Phi_p(s)| > \frac 1 {M p} \} \ , \ M=8 \ , \ 26 \le p \le 175 \ , \  \bar{x} < x < \exp(-\frac{p^2}{25}) \ . $$
For $175 < p < \infty$, the same holds with $M=7.85$, and for $17 \le p \le 26$ with $M=8.6$. We will use this for $0 \le s \le \frac 3 4 \pi p$, $y= \frac s p \le \frac 3 4 \pi$. The function $k$ is decreasing in $y \in [0,\frac 3 4 \pi]$, since
$$(\ln k)'(y) = - \frac{140 \sin(2y)}{221+140 \cos(2y)} + \cot(y) - \frac 1 y \le - \frac{140 \sin(2y)}{221+140 \cos(2y)} - \frac y 3 - \frac{y^3}{45} \ , $$
using the series development of cotangent. For $0 < y \le \frac \pi 2$, this is obviously negative, and for $\frac \pi 2 < y < \frac 3 4 \pi$, too, as may be checked. Therefore for $s \in ((j-1) \pi, j \pi]$, $j \le \frac 3 4 p$, $k(\frac s p) \ge k(\frac {j \pi} p)$. \\

Let $J_j = ((j-1) \pi - \alpha_p((j-1) \pi),j \pi - \alpha_p(j \pi)]$ for $j = 1 \etc n$. Recall that $\alpha_p(0)=0$, that $\alpha_p(s)$ is increasing for $0 < s < (\pi - \arccos(\frac 5 {14})) \frac p 2 \simeq 0.968 p$ and decreasing for $(\pi - \arccos(\frac 5 {14}) ) \frac p 2< s <(\pi + \arccos(\frac 5 {14}) ) \frac p 2 \simeq 2.173 p$. Let $y_j = \frac 1 M \frac{\frac{j \pi} p}{k(\frac{j \pi} p)}$, essentially $y_j \simeq \frac 1 M \frac{j \pi} p$. Solving $|\sin(s - \alpha_p(s))| = y_j$ for $s$ in $J_j$ yields two solutions $s_{j,1} < s_{j,2}$ with
$s_{j,1} - \alpha_p(s_{j,1}) = \pm \arcsin(y_j)$, $s_{j,2} - \alpha_p(s_{j,2}) = \pm (\pi - \arcsin(y_j))$, hence $\pm 2 \arccos(y_j) = \pm (\pi - 2 \arcsin(y_j)) = s_{j,2} - s_{j,1} - (\alpha_p(s_{j,2}) - \alpha_p(s_{j,1}))$. In each interval $J_j$, the difference $\alpha_p(s_{j,2}) - \alpha_p(s_{j,1})$ is small, of modulus $< \frac 2 p$, positive for $0 < s_{j,i} < 0.96 p$ and negative for $s_{j,i} > 0.97 p$. The gaps between successive differences $\alpha_p(s_{j+1,1}) - \alpha_p(s_{j,2})$ are even smaller, of order $\frac 1 {p^2}$, since $0 < s_{j+1,1} - s_{j,2} = O(\frac 1 p)$ (for small $j$ about $\frac{2 \pi} M \frac 1 p < \frac 1 p$) and $|\alpha_p'(s)| \le \frac{10} 9 \frac 1 p$. The gaps slightly increase with $j$. Further, $\frac{|\alpha_p(s)|} s \le \frac 5 9 \frac 1 p$ for all $s>0$. The distribution function of $\Phi_p$ will be bounded below by sums of $s_{j,2} - s_{j,1}$. The error by sums of $-(\alpha_p(s_{j,2}) - \alpha_p(s_{j,1}))$ over $j$ is bounded in modulus by the maximum of $|\alpha_p(s)|$, which is $\arctan(\frac 5 {3 \sqrt{19}}) < 0.366$, attained at $\pi \pm \arccos(\frac 5 {14})$. We will estimate the distribution function of $|\Phi_p|$ from below by assuming $\alpha_p(s) =0$ and correct the possible error by subtracting from the lower bound the value $0.366$. \\

Hence consider $I_j = ((j-1) \pi, j \pi]$, $j \in \N$ and $|\Phi_p(s)| = k(\frac s p) \frac{|\sin(s)|} s$. For $s \in I_j$, $|\Phi_p(s)| \ge k(\frac {j \pi} p) \frac{|\sin(s)|} {j \pi}$ and
$$\lambda \{ s \in I_j \ \Big| \ |\Phi_p(s)| > \frac 1 {M p} \} \ge \lambda \{ s \in I_j \ \Big| \ |\sin(s)| > \frac 1 M \frac{\frac{j \pi} p}{k(\frac {j \pi} p)} \} \ . $$
Obviously, we require for $y=\frac{j \pi} p$ that $l(y):= \frac 1 M \frac y {k(y)} = \frac{19} M \frac{y^2}{\sin(y)} \frac 1 {\sqrt{221+140 \cos(2y)}} \le 1$. Since $k$ is decreasing, $l$ is increasing and $l(y)=1$ has a unique solution $y_0 > 0$. For $M = 8$ it is $y_0 \simeq 2.1228$, for $M=7.85$, $y_0 \simeq 2.0976$ and for $M=8.6$, $y_0 \simeq 2.2096$. As required above, $y_0 \le \frac 3 4 \pi$. Define $n \in \N$ and $\theta \in [0,1)$ by $\frac {y_0} \pi p = n + \theta$. Then $\frac{ j \pi} p = y_0 \frac j {n + \theta} \le y_0$ for all $j = 1 \etc n$ and $l(\frac{j \pi} p) \le 1$. We get for $j = 2 \etc n$
\begin{align*}
\lambda \{ s \in I_j \ \Big| \ |\sin(s)| > \frac 1 M \frac{\frac{j \pi} p}{k(\frac {j \pi} p)} \} & = \lambda \{ s \in I_j \ \Big| \ |\sin(s)| > l \left(\frac {y_0 j} {n + \theta} \right) \} \\
& = 2 \arccos \left( l \left(\frac {y_0 j} {n+ \theta} \right) \right) \ ,
\end{align*}
with an additional term $\arcsin \left(l \left(\frac {y_0} {n + \theta} \right) \right)$ for $j=1$ and
\begin{align}\label{eq4.6}
F(x) & \ge  2 \sum_{j=1}^n \arccos \left(l \left(\frac {y_0 j} {n+ \theta} \right) \right) + \arcsin \left(l \left(\frac {y_0} {n + \theta} \right) \right) \nonumber \\
& \ge  2 \sum_{j=0}^n \arccos \left(l \left(\frac {y_0 j} {n+ \theta} \right) \right) - \pi + l \left(\frac {y_0} {n + \theta} \right)  \ ,
\end{align}
also using $\arcsin(x) \ge x$ for $0 \le x \le 1$. Let $h(u) := \arccos \left(l \left(\frac {y_0 u} {n + \theta} \right) \right)$.
By the Euler - Maclaurin summation formula and the substitution $v = (n + \theta) u$,
\begin{align}\label{eq4.7}
\sum_{j=0}^n & h(j) = \int_0^n h(u) \ du + \frac{h(0) + h(n)} 2 + \int_0^n (u - [u] - \frac 1 2) \ h'(u) \ du \nonumber \\
& = (n + \theta) \int_0^{\frac n {n+\theta}} \arccos(l(y_0 v)) \ dv + \frac 1 2 \left[ \frac \pi 2 + h(n) \right] + \int_0^n (u - [u] -\frac 1 2) h'(u) \ du \ .
\end{align}
Similar as in the proof of Proposition \ref{prop5}, we have $h'(u) < 0$ for all $0 < u < 1$ and
$$\int_0^n (u - [u] -\frac 1 2) h'(u) \ du \ge \int_{n - \frac 1 2}^n (u - [u] -\frac 1 2) h'(u) \ du  \ . $$
In the proof of Proposition \ref{prop5} we used that $|h'(u)|$ was increasing, which is true here for $0 < u < 1, u \notin [0.7,0.85]$. However, while this may be false when $u \in [0.7,0.85]$, $|h'(u-c) +h'(u)|$ is then increasing, when $c \in [\frac 1 5, \frac 1 4]$, as a somewhat tedious investigation shows. If $u = \frac{y_0 j}{n + \theta}$ is in $[0.7,0.85]$, choose $k < j$ such that $v := \frac{y_0 k}{n + \theta}$ satisfies $\frac 1 5 < u-v < \frac 1 4$ to conclude for $K_j = [j-\frac 1 2, j + \frac 1 2]$ and $K_k = [k-\frac 1 2, k + \frac 1 2]$ that $\int_{K_j \cap K_k} ((u - [u] -\frac 1 2) h'(u) \ du \ge 0$. Combining any two such intervals, if necessary, we get, since $h'(u) < 0$,
$$\int_0^n (u - [u] -\frac 1 2) h'(u) \ du \ge \frac 1 2 \int_{n-\frac 1 2}^n h'(u) \ du = \frac 1 2 \left(h(n) - h(n-\frac 1 2)\right) \ . $$
Further, $\int_0^{\frac n {n+\theta}} \arccos(l(y_0 v)) \ dv = \int_0^1 \arccos(l(y_0 v)) \ dv - \int_{\frac n {n+\theta}}^1 \arccos(l(y_0 v)) \ dv$ and
$(n + \theta) \int_{\frac n {n+\theta}}^1 \arccos(l(y_0 v)) \ dv \le (n+ \theta) (1-\frac n {n+\theta}) \arccos(l(\frac{y_0 n}{n + \theta})) = \theta h(n)$, since $h$ is decreasing. Inserting this into equation \eqref{eq4.7}, we find with $n + \theta = \frac{y_0} \pi p$
\begin{align*}
\sum_{j=0}^n h(j) & \ge \frac{y_0} \pi p \int_0^1 \arccos(l(y_0 v)) \ dv + (1-\theta) h(n) - \frac 1 2 h(n-\frac 1 2) + \frac \pi 4 \\
& \ge \frac{y_0} \pi p \int_0^1 \arccos(l(y_0 v)) \ dv - \frac 1 2 h(n-\frac 1 2) + \frac \pi 4 \ .
\end{align*}
Together with inequality \eqref{eq4.6}, we get
$$F(x) \ge 2 \frac{y_0} \pi p \int_0^1 \arccos(l(y_0 v)) \ dv - \frac \pi 2 - \arccos\left(l\left(\frac{y_0 (n-\frac 1 2)}{n + \theta}\right)\right) + l\left(\frac {y_0} {n + \theta}\right) \ . $$
As mentioned before, to take into account the effect of the omitted $\alpha_p$-terms, we subtract from this the value
$\arctan(\frac 5 {3 \sqrt{19}})$. Since $\frac \pi 2 + \arctan(\frac 5 {3 \sqrt{19}}) < 1.9361$, we arrive at
\begin{equation}\label{eq4.8}
F(x) \ge 2 \frac{y_0} \pi p \int_0^1 \arccos(l(y_0 v)) \ dv - 1.9361 - \arccos\left(l\left(\frac{y_0 (n-\frac 1 2)}{n + \theta}\right)\right) + l\left(\frac {y_0} {n + \theta}\right) \ .
\end{equation}
Consider first the case $175 \le p \le \infty$. Then $y_0 \ge 2.0976$ and numerical integration yields $L:= \int_0^1 \arccos(l(y_0 v)) \ dv \ge 1.15206$. We have $n + \theta = \frac{y_0} \pi p \ge 116$ and then $- \arccos\left(l\left(\frac{y_0 (n-\frac 1 2)}{n + 1}\right)\right) + l\left(\frac {y_0} {n + 1}\right) \ge -0.1935$ so that with $2 \frac{y_0} \pi L \ge 1.5384$, $F(x) \ge 1.5384 p - 2.13$. In the case $26 \le p \le 175$ we have similarly with $y_0 \ge 2.1228$ by numerical integration that $L = \int_0^1 \arccos(l(y_0 v)) \ dv \ge 1.15197$. Then $n + \theta = \frac{y_0} \pi p \ge 17.5$ implies that
$- \arccos\left(l\left(\frac{y_0 (n-\frac 1 2)}{n + 1}\right)\right) + l\left(\frac {y_0} {n + 1}\right) \ge -0.4639$, with the conclusion $F(x) \ge 1.5568 p - 2.40$. Finally, in the case $20 \le p \le 26$, $y_0 \ge 2.2096$, $L \ge 1.15628$ and $n + \theta \ge 14.05$, with
$- \arccos\left(l\left(\frac{y_0 (n-\frac 1 2)}{n + 1}\right)\right) + l\left(\frac {y_0} {n + 1}\right) \ge -0.5414$, so that $F(x) \ge 1.6265 p - 2.478$.
We note that the integral $L = \int_0^1 \arccos(l(y_0 v)) \ dv$ does not depend very much on $M$, being with $\simeq 1.15$ not much bigger that
$\int_0^1 \arccos(v) \ dv = 1$, which was used in the previous estimate \eqref{eq4.5}. (Actually, $l(y_0 v) \le v$ and $\arccos(l(y_0 v)) \ge \arccos(v)$ for all $0 \le v \le 1$.) The factor of $p$ in \eqref{eq4.5} was only $\frac 5 8$, while it is now $> \frac 3 2$, as a consequence of the better approximation of $\gamma_p$ in Lemma \ref{lem4}. Corresponding to \eqref{eq4.5} we have to check that
$F(x) -  3.5528 \sqrt{(p+1) [\ln(A p) + 0.365]} > 0$. In the first case $175 < p < \infty$, this will be satisfied, if
$$\psi_A(p) := 1.5384 p - 2.13 -  3.5528 \sqrt{(p+1) [\ln(A p) + 0.365]} > 0 \ . $$
Since $\psi$ is increasing in $p$, we only have to verify this for a minimal $p$, given a fixed $A>1$. For $A=p$ it is satisfied for $p \ge 50$, $\psi_A(50)>2$, and hence automatically for all $p \ge 175$. For $p \ge 175$, we may even choose $A=p^5$. In the range $26 \le p \le 175$, the condition is
$$\psi_A(p) := 1.5568 p - 2.40 -  3.5528 \sqrt{(p+1) [\ln(A p) + 0.365]} > 0 \ , $$
which is satisfied for $A=2$ and $p \ge 26.5$. For $26.265 \simeq p_0 \le p \le 26.5$, $dist(p,2 \N) > 2^{-p}$ is automatically satisfied and $A = \frac 3 2$ suffices for $\psi_A(p_0) > 0$. For $A=10$, it is satisfied for $p \ge 37$ and for $A=p$ for $p \ge 46$. Consider now $20 \le p \le p_0$. We then have to check
$$\psi_A(p) := 1.6265 p - 2.478 -  3.5528 \sqrt{(p+1) [\ln(A p) + 0.365]} > 0 \ . $$
This is true, if $p \ge 20.2$ and $A=15/14$; for $20 \le p \le 20.2$, $dist(p, 2 \N) < A^{-p}$ holds, which is excluded in Theorem \ref{th3}. This proves Theorem \ref{th3} also for $20 < p \le 400$.    \hfill $\Box$  \\

{\bf Remark.} To prove Theorem \ref{th3} (b) also for $2 < p < 20$, a reasonably good approximation of $\gamma_p(s)$ by damped trigonometric functions for $p \le s \le p^2$ would be helpful. \\

\vspace{0,5cm}

\vspace*{0.5cm}

\noindent Mathematisches Seminar \\
Universit\"at Kiel \\
24098 Kiel, Germany \\
hkoenig@math.uni-kiel.de \\

\end{document}